\documentclass[11pt]{amsart}
\usepackage[utf8]{inputenc}
\usepackage{amsfonts, amssymb}
\usepackage{amssymb,amsmath,amscd,euscript,verbatim,array}
\usepackage{geometry}
\usepackage{anysize}
\usepackage{fancyhdr}
\usepackage{indentfirst}
\usepackage{graphicx}
\usepackage{ifpdf}
\usepackage{marginnote}
\usepackage{enumitem}
\usepackage{mathrsfs}
\usepackage{bm}
\usepackage{xcolor}
\setlist[itemize]{leftmargin=2em}

\newcommand{\R}{\mathbb{R}}

\newcommand{\D}[2]{\displaystyle \frac{\partial #1}{\partial #2}}

\newtheorem{MainTheorem}{Theorem}

\newtheorem{Theorem}{Theorem}
\newtheorem{Definition}{Definition}[section]
\newtheorem{Proposition}[Definition]{Proposition}
\newtheorem{Corollary}[Definition]{Corollary}
\newtheorem{Lemma}[Definition]{Lemma}

\theoremstyle{definition}
\newtheorem{Remark}[Definition]{Remark}

\newcommand{\bthm}{\begin{Theorem}}
	\newcommand{\ethm}{\end{Theorem}}
\newcommand{\bpr}{\begin{Proposition}}
	\newcommand{\epr}{\end{Proposition}}
\newcommand{\blm}{\begin{Lemma}}
	\newcommand{\elm}{\end{Lemma}}
\newcommand{\bex}{\begin{Exercise}}
	\newcommand{\eex}{\end{Exercise}}
\newcommand{\be}{\begin{equation}}
	\newcommand{\ee}{\end{equation}}
\newcommand{\beal}{\begin{aligned}}
	\newcommand{\enal}{\end{aligned}}
\newcommand{\brm}{\begin{Remark}}
	\newcommand{\erm}{\end{Remark}}

\definecolor{darkgreen}{HTML}{006400}
	
	\usepackage{hyperref}
	\usepackage{color}

\newcommand{\rev}[1]{\textcolor{black}{#1}}
\RequirePackage{url}

\begin{document}

	\title[Quantitative Persistence and Regularity of Invariant Graphs]{Quantitative Persistence and Regularity  of Invariant Graphs for  Dissipative Twist Maps}
	
	\author{Deyu Kong}
	\address{School of Mathematics and Statistics, Beijing Institute of Technology, Beijing 100081, China}
	\email{dykong@bit.edu.cn}
	
	\author{Jessica Elisa Massetti}
	\address{Department of Mathematics, University of Rome Tor Vergata, Via della ricerca scientifica 1, 00133 Rome, Italy}
	\email{massetti@axp.mat.uniroma2.it}

	\author{Lin Wang}
	\address{School of Mathematics and Statistics, Beijing Institute of Technology, Beijing 100081, China}
	\email{lwang@bit.edu.cn}
	
	\subjclass[2010]{Primary 37J40; Secondary 37E40}
	
	\keywords{Dissipative Twist Maps, Invariant Graphs, Lagrangian Submanifolds}

	\begin{abstract}
		In this paper we study the robustness of invariant Lagrangian graphs for dissipative twist maps on  $\mathbb T^d\times\mathbb R^d$ in the $C^r$ category, for any $r\ge 1$, and show their persistence as invariant global attractors under quantitative sharp conditions involving both the size of dissipation parameter and the perturbation at stake. Note that when $d>1$ the geometric Lagrangian property of the invariant manifold may be destroyed, so that the graph may persist but fail to be Lagrangian. In general, additional dynamical requirements on the twist maps come into play if we intend to preserve also the Lagrangian property. In this line, we first study their persistence for general perturbations, then we restrict to the class of exact conformally symplectic twist maps and provide quantitative sharp conditions for the persistence of invariant exact Lagrangian graphs, without assuming any condition on the tangential dynamics of the manifold. The persistence result relies on a combination of a suitable multidimensional Herman--Mather
formula proved in this setting and contraction estimates that provide sharp bounds on the $C^1$-norm of the perturbation to get a unique, globally attracting $C^r$ graph. In dimension $d=1$, by further assuming that that the circle map induced by the graph has irrational
rotation number, we prove that every $C^1$ invariant graph of a $C^k$ map is $C^k$
for $k=2,3,\ldots,\infty$. Analytic maps with Lipschitz, non-$C^1$ invariant
graphs at every prescribed irrational rotation number show that
this initial regularity assumption is essential.
\end{abstract}

	\maketitle
	
	{\footnotesize\setlength{\baselineskip}{10pt}\vspace{-\baselineskip}\tableofcontents}
	
	\newpage

\section{Introduction and main results}

We study invariant graphs for dissipative twist maps on
$\mathbb T^d\times\mathbb R^d$, $d\ge1$, whose lifts have the form
\begin{equation}\label{eq:dissipative_map}
 F_{\Lambda,\alpha}^{\phi}(x,y)
 =\bigl(x+\alpha_1+\Lambda_1y+\phi_1(x),\,
         \alpha_2+\Lambda_2y+\phi_2(x)\bigr).
\end{equation}
Here $\alpha=(\alpha_1,\alpha_2)\in\mathbb R^{2d}$,
$\Lambda_1,\Lambda_2\in GL_d(\mathbb R)$, and the perturbations
$\phi_1,\phi_2:\mathbb R^d\to\mathbb R^d$ are $\mathbb Z^d$-periodic.
We assume that $F_{\Lambda,\alpha}^{\phi}$ is a global diffeomorphism
and that its projection $f_{\Lambda,\alpha}^{\phi}$ on the cylinder
is isotopic to the identity and satisfies
\begin{equation}\label{dissp}
 0<\det DF_{\Lambda,\alpha}^{\phi}(x,y)<1.
\end{equation}
Periodicity gives the lift relation
$F(x+k,y)=F(x,y)+(k,0)$ for $k\in\mathbb Z^d$.
The full conventions are given in Definition~\ref{DTM}.
\rev{Our main contraction assumption is
\begin{equation}\label{assumption M}
 M:=\Lambda_1\Lambda_2\Lambda_1^{-1},\qquad \|M\|<1,
\end{equation}
where $\|\cdot\|$ stands for the operator norm induced by the standard euclidean metric on $\R^d$.}
We seek explicit conditions under which an invariant graph exists,
determine its regularity, and ask when it is Lagrangian. \rev{We stress the fact that in dimension greater than $2$, the Lagrangian property is not automatically preserved together with the invariant graph and there exist cases where the invariant graph persists, but its Lagrangian property is destroyed by the perturbation (see Section \ref{sec:non_lagrangian}). This geometric property is kept only under further conditions on the admissible perturbations or on under dynamical hypothesis of the system at stake (see Section \ref{intro conformally}). For this reason we shall treat the two matters separately and distinguish between a more general result on existence of $C^k$-invariant graph and one of invariant Lagrangian graph, where we shall restrict to the class of exact conformally sympletic twist map, the geometric property being more involved.}

The study of attracting invariant sets for dissipative annulus maps
goes back to Birkhoff~\cite{Bir32}. Later work established periodic
orbits for dissipative twist maps~\cite{Cas87} and quasi-periodic
orbits in Birkhoff attractors~\cite{LeC86}. Arnaud, Humili\`ere,
and Viterbo~\cite{AHV} extended the notion of a Birkhoff attractor
to higher-dimensional conformally symplectic dynamics. \\
\rev{ In the present paper, we study the competition between size of the perturbation and strength of attraction, focusing on the study of the regime in which an invariant graph persists, the
geometric structure it retains, and the assumptions needed for its
regularity. }

\subsection{Persistence and the Herman--Mather formula}

When $\phi_1=\phi_2=0$, the integrable map is
\begin{equation}\label{I-G,Gamma0}
 F_{\Lambda,\alpha}(x,y)
 =\bigl(x+\alpha_1+\Lambda_1y,\,\alpha_2+\Lambda_2y\bigr).
\end{equation}
The contraction assumption implies that $I_d-\Lambda_2$ is invertible.
The unique continuous periodic invariant graph is
\[
 \Gamma_0:=\mathbb R^d\times
 \bigl\{(I_d-\Lambda_2)^{-1}\alpha_2\bigr\},
\]
and its projection on the cylinder attracts every orbit. The induced
base map is the rigid translation by
\[
 \omega_0:=\alpha_1+\Lambda_1(I_d-\Lambda_2)^{-1}\alpha_2.
\]
On the one hand, the normally hyperbolic invariant manifold theorem~\cite{HPS77}
ensures the persistence under sufficiently small perturbations, on the other one, our purpose here is to give
explicit bounds that relate the perturbation size to the contraction
and the desired regularity of the graph.
\rev{We stress the fact that we are not concerned with the control on the tangential dynamics of invariant manifold for which, in the frame of
conformally symplectic systems, a KAM-type approach prove that for certain parameters-dependent families of diffeomorphisms or vector fields, invariant tori
with prescribed Diophantine dynamics persist under suitable nondegeneracy
conditions on the system and appropriate choice of parameters (see for example \cite{CCD13,EFK:2015,Massetti:ETDS,CCD20}.}
\rev{On the other hand, one can take full advantage of the Diophantine property of the rotation vector and use normal form techniques in different directions, in order to pull-back the system in a suitable form and average out up to a certain order the angle-dependence that obstruct the manifold to be invariant, and finally combine a graph transform argument to recover the persistence as in the classic normally hyperbolic context. See for instance
Massetti~\cite{Mas23} and references therein,
for the study of stability of attracting circles for general real analytic perturbations of dissipative twist maps.} \\
The following result applies to every drift $\alpha$
and requires no arithmetic condition on $\omega_0$ or assumption on
the dynamics of the perturbed graph. Norms and nonintegral regularity
are specified in Section~\ref{sec:notation}.

\begin{MainTheorem}[Persistence of an invariant graph]
\label{thm:graph_persistence}
Let $1\le r<\infty$, and let $F_{\Lambda,\alpha}^{\phi}$ be a $C^r$
dissipative twist map as above. Choose $L$ such that
\[
\begin{cases}
 0<L\le1-\|M\|^{1/2},&r=1,\\
 0<L<1-\|M\|^{1/(r+1)},&r>1.
\end{cases}
\]
Suppose that
\begin{equation}\label{eq:persistence_smallness}
\begin{aligned}
 &\|\phi_1\|_{C^1}
 +\frac{\|\Lambda_1\phi_2-M\phi_1\|_{C^1}}{1-L}
 \le\frac{L(1-L-\|M\|)}{1-L},\\
 &\|\Lambda_1\phi_2-M\phi_1\|_{C^1}
 \begin{cases}
 \le(1-L)^2-\|M\|,&r=1,\\
 <(1-L)^{r+1}-\|M\|,&r>1.
 \end{cases}
\end{aligned}
\end{equation}
Then $F_{\Lambda,\alpha}^{\phi}$ has a unique continuous periodic
invariant graph $\Gamma_\psi=\{(x,\psi(x)):x\in\mathbb R^d\}$, with
$\psi\in C^r(\mathbb T^d,\mathbb R^d)$. The projected graph
$\bar\Gamma_\psi$ is a global attractor: the distance of every orbit
to it tends to zero exponentially, uniformly for initial points in
bounded sets.
\end{MainTheorem}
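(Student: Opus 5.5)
The plan is a graph transform in the coordinates where the vertical contraction is exactly $M$, followed by a fiber-contraction argument for the higher derivatives. Set $z=\Lambda_1 y$. In the variables $(x,z)$ the lift becomes
\[
(x,z)\mapsto\bigl(x+\alpha_1+z+\phi_1(x),\;\Lambda_1\alpha_2+Mz+\Lambda_1\phi_2(x)\bigr).
\]
A graph $y=\psi(x)$ corresponds to $z=u(x)$ with $u=\Lambda_1\psi$. Put $w:=u+\phi_1$, so that the base map is $g_w(x)=x+\alpha_1+w(x)$. In these variables the invariance equation reads
\[
w\circ g_w=c+Mw+h+\phi_1\circ g_w,\qquad c:=\Lambda_1\alpha_2,\quad h:=\Lambda_1\phi_2-M\phi_1 .
\]
This form explains the appearance of $h=\Lambda_1\phi_2-M\phi_1$ in \eqref{eq:persistence_smallness}. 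It suggests working with the closed set
\[
\mathcal X_L=\bigl\{\,w:\mathbb R^d\to\mathbb R^d\ \mathbb Z^d\text{-periodic},\ \mathrm{Lip}(w)\le L\,\bigr\}
\]
(modulo bounded constants) and the graph transform
\[
\mathcal T w=\bigl(c+Mw+h\bigr)\circ g_w^{-1}+\phi_1 .
\]

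\textbf{Step 1: $\mathcal T$ is well defined and preserves $\mathcal X_L$.} Since $\mathrm{Lip}(w)\le L<1$, the map $g_w$ is a bi-Lipschitz homeomorphism commuting with integer translations, and $\mathrm{Lip}(g_w^{-1})\le (1-L)^{-1}$. Hence
\[
\mathrm{Lip}(\mathcal Tw)\le \frac{\|M\|L+\|h\|_{C^1}}{1-L}+\|\phi_1\|_{C^1}.
\]
The first inequality in \eqref{eq:persistence_smallness} is what makes the right-hand side $\le L$; a short algebraic check should confirm this, and I expect the inequality to be designed for exactly this estimate.

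\textbf{Step 2: $C^0$ contraction.} I compare $\mathcal Tw$ and $\mathcal Tv$ at a common point $\xi$, with preimages $x=g_w^{-1}\xi$ and $x'=g_v^{-1}\xi$. One has $|x-x'|\le |w-v|_{C^0}/(1-L)$. A naive bound gives $|\mathcal Tw-\mathcal Tv|_{C^0}\le \bigl(\|M\|+(\|M\|L+\|h\|_{C^1})/(1-L)\bigr)\,|w-v|_{C^0}$, and this factor is not obviously $<1$. So I will instead measure the discrepancy in the vertical direction via the cone structure: the vertical contraction $\|M\|$ beats the horizontal expansion $(1-L)^{-1}$, because the $r=1$ condition gives $\|M\|\le(1-L)^2-\|h\|<1-L$. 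Failing that, I can use a weighted norm. Banach's fixed point theorem then gives a unique Lipschitz invariant $w$, hence a unique $\psi$.

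For uniqueness among all continuous periodic invariant graphs, I will show that any such graph is attracted exponentially to $\Gamma_\psi$. For this I estimate, along an orbit, the quantity $|z_n-u(x_n)|$ using the invariance of $u$ and the cone estimate; it should shrink by a factor $\le\|M\|/(1-L)<1$ up to terms absorbed by the Lipschitz bound. The same estimate gives the global exponential attraction claimed in the statement, uniformly on bounded sets.

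\textbf{Step 3: regularity (the main obstacle).} I will apply the fiber contraction theorem inductively on $k\le\lfloor r\rfloor$. Formally differentiating the invariance equation $k$ times, $D^kw$ satisfies an affine equation of the form
\[
D^kw\circ g_w\,(Dg_w)^{\otimes k}=M\,D^kw+\text{(lower order terms)} .
\]
This equation is contractive on bounded continuous $k$-linear-valued sections provided
\[
\|M\|\,\|Dg_w^{-1}\|^{k}\,\bigl(1+\|Dh\|\,\|Dg_w^{-1}\|\bigr)\cdot\ldots<1,
\]
which reduces to $\|h\|_{C^1}+\|M\|<(1-L)^{k+1}$. This is precisely the second condition in \eqref{eq:persistence_smallness}, and the restriction $L<1-\|M\|^{1/(r+1)}$ makes this condition nonvacuous.

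For non-integer $r=k+\beta$, I will run the same argument on the closed set of sections with $\beta$-Hölder constant bounded, where the rate becomes $\|M\|(1-L)^{-(k+\beta)}$ times the correction factor, again $<1$ under the strict inequality. The delicate points, which I expect to be the main obstacle, are these:
\begin{itemize}
\item the exact bookkeeping of the constants, so that the sharp exponent $r+1$ (rather than $r$) and the boundary case allowing $\le$ for $r=1$ come out correctly;
\item the fact that the fiber contraction only yields continuity of the derivatives of the candidate $D^kw$, so one must check that the limit is actually the derivative, via the standard argument of iterating from a smooth initial guess and passing to the $C^k$ limit.
\end{itemize}
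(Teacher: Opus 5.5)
\textbf{Same operator as the paper.} Your operator is the paper's Herman--Mather operator written in other variables. With $g_w=\mathrm{Id}+\alpha_1+w$ one checks $T(g_w)=g_{\mathcal T w}$, so your Steps 1--2 and the derivative recursion in Step 3 are the paper's argument.

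\textbf{Step 2 needs no fix.} The hesitation there is unnecessary, because your ``naive'' factor simplifies:
\[
\|M\|+\frac{\|M\|L+a}{1-L}=\frac{\|M\|+a}{1-L},\qquad a:=\|\Lambda_1\phi_2-M\phi_1\|_{C^1}.
\]
This is $\le 1-L<1$ by the second condition, and also $\le\|M\|+L<1$ by the first. You can also see it directly from $w(x)-v(x')=x'-x$ whenever $g_w(x)=g_v(x')$. The same factor governs the attraction estimate: the rate is $\|M\|+\mathrm{Lip}(u)\le(\|M\|+a)/(1-L)$, not $\|M\|/(1-L)$.

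\textbf{The gap: the $C^1$ endpoint.} The statement explicitly allows $\|M\|+a=(1-L)^2$ when $r=1$. For $A=Dg_w$ on $\{\|A-I_d\|\le L\}$, the recursion is
\[
A\mapsto I_d+M+D\phi_1\circ g_w+(Dh-M)A^{-1}.
\]
Its Lipschitz bound is only $(\|M\|+a)/(1-L)^2=1$. So neither Banach's theorem nor the fiber contraction / $C^r$ section theorem applies. Your Step 3 needs $\|M\|+a<(1-L)^{k+1}$, so it covers only the strict case.

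This is not a bookkeeping issue, and you cannot escape it by moving $L$. Take the ECS case ($h\equiv0$, $M=\lambda I_d$) with $\|D\Phi\|_{C^1}=(1-\sqrt\lambda)^2$; then the only admissible value is $L=1-\sqrt\lambda$. Concretely, let $d=1$, $\Phi=\frac{(1-\sqrt\lambda)^2}{4\pi^2}\cos(2\pi x)$ and $\alpha=0$. Oddness gives $g_n(0)=0$ for all iterates, and the recursion at $x=0$ is $v\mapsto 2\sqrt\lambda-\lambda/v$. Its fixed point $v=\sqrt\lambda=Dg(0)$ is parabolic, with derivative exactly $1$, and $Dg_n(0)$ converges only like $1/n$. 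There is no linear contraction rate to be found.

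\textbf{The missing idea.} The paper supplies it in Lemma~\ref{lem:strict_inverse_endpoint}. For distinct $A,B$ with $\|A-I_d\|,\|B-I_d\|\le L$, one has $(1-L)^2\|A^{-1}-B^{-1}\|<\|A-B\|$. Equality would force $By=(1-L)y$ and $Az=A^Tz=(1-L)z$ for $z=(A-B)y\neq0$, whence $|z|^2=\langle y,(A^T-(1-L)I_d)z\rangle=0$. Compactness upgrades this to a nondecreasing, upper semicontinuous modulus $\omega(t)<t$. Lemma~\ref{lem:nonlinear_convergence_endpoint}, a Boyd--Wong type convergence result for $v_{n+1}=F_1(v_n,g_n)$, then gives uniform convergence of $Dg_n$ without any linear rate.

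Without such an argument your proof misses exactly the sharp case. That case is the one used later to obtain the threshold $(1-\sqrt\lambda)^2$ in Theorem~\ref{thm:Lagrangian}. For $r>1$ all conditions are strict, and your outline (iterating from $\mathrm{Id}$ and passing to the limit in the derivatives) matches the paper's.
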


The proof relies on two main ingredients. \rev{The first one is a generalization of the
Herman--Mather formula (see \cite{Herman:2018, Her83}) to the multidimensional dissipative setting, that enables us to translate the problem of persistence in terms of finding the ``fixed point" of an appropriate operator.\\ More specifically, in
Proposition~\ref{pro:Herman} we characterize continuous invariant graphs
by a functional equation for their induced base maps and recover
the graph explicitly from any homeomorphism satisfying that equation.}
This identifies graph invariance with a fixed-point problem for an
operator on base maps.

The second ingredient is to prove that in fact the fixed point equation is solvable, via a $C^r$-contraction fixed-point theorem in appropriate Banach space. See Theorem~\ref{thm:DSM_Cr_smoothness} in
Section~\ref{sec:persistence}. The Banach contraction
principle~\cite[Theorem~5.7]{Bre10} gives a Lipschitz fixed point;
convergence of the differentiated iterates gives its $C^r$ regularity.
At the $C^1$ endpoint, a strict nonlinear estimate for matrix inverses
replaces a uniform linear contraction of the derivative equation.
Normal hyperbolicity is not asserted at this endpoint. For $r>1$,
the strict inequalities in \eqref{eq:persistence_smallness} give
the bounds needed for higher regularity. The same construction proves
global attraction and uniqueness among continuous invariant graphs.

\subsection{Lagrangian graphs and exact conformally symplectic maps}\label{intro conformally}

In dimension one, every $C^1$ graph is Lagrangian. For $d\ge2$,
arbitrarily small analytic perturbations can preserve the invariant
graph while destroying its Lagrangian property; an explicit example
is given in Section~\ref{sec:non_lagrangian}. This leads us to ask
which geometric assumptions ensure that the persistent graph remains
Lagrangian. \rev{ A good class to work with, is the one of exact conformally symplectic (ECS) family
\begin{equation}\label{eq:conformal_symplectic_twist_map}
 F^{\Phi}_{\lambda,\alpha}(x,y)
 =\bigl(x+\alpha+\lambda y+D\Phi(x),\,
         \lambda y+D\Phi(x)\bigr),
 \qquad 0<\lambda<1,
\end{equation}
where $\Phi$ is a periodic potential, to which we shall restrict. The above map is exact conformally
symplectic with conformal factor $\lambda$, meaning that ${F^\Phi_{\lambda,\alpha}}^*\Omega = \lambda \Omega$, where $\Omega$ is the standard symplectic $2$-form; for the precise notion see
Definition~\ref{def:Conformally Symplectic Twist Map}.}
Mar\`o and Sorrentino~\cite{MS17} developed an Aubry--Mather theory
for conformally symplectic systems, related to discounted
Hamilton--Jacobi equations. Arnaud and F\'ejoz~\cite{AF24} gave
criteria for invariant submanifolds to be Lagrangian involving
dimension and entropy. \rev{Here we exploit conditions that do not depend on the dynamics carried by the graph and give explicit bounds that guarantee that the family
\eqref{eq:conformal_symplectic_twist_map} admit an invariant exact Lagrangian graph.}  Let us set
\begin{equation}\label{eq:eta_r}
 \eta_r(\lambda):=
 (1-\lambda^{1/(r+1)})(1-\lambda^{r/(r+1)}),\qquad 1\le r<\infty.
\end{equation}

\begin{MainTheorem}[Persistence of exact Lagrangian graphs]
\label{thm:main_lagrangian}
Let $d\ge1$, $0<\lambda<1$, $\alpha\in\mathbb R^d$, and
$1\le r<\infty$. Suppose that
$\Phi\in C^{r+1}(\mathbb T^d,\mathbb R)$ satisfies
\[
 \begin{cases}
 \|D\Phi\|_{C^1}\le\eta_1(\lambda)=(1-\sqrt\lambda)^2,&r=1,\\
 \|D\Phi\|_{C^1}<\eta_r(\lambda),&r>1.
 \end{cases}
\]
Then $F^{\Phi}_{\lambda,\alpha}$ has a unique continuous periodic
invariant graph. This graph is of class $C^r$ and is exact
Lagrangian, and its projection on the cylinder is a global attractor.
\end{MainTheorem}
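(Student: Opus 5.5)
The plan is to obtain Theorem~\ref{thm:main_lagrangian} as a specialization of Theorem~\ref{thm:graph_persistence}, and then prove the exact Lagrangian property separately through an invariance argument.

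\textbf{Matching the two settings.} The family \eqref{eq:conformal_symplectic_twist_map} is \eqref{eq:dissipative_map} with
\[
\Lambda_1=\Lambda_2=\lambda I_d,\qquad \phi_1=\phi_2=D\Phi,\qquad \alpha_1=\alpha,\qquad \alpha_2=0.
\]
Then $M=\lambda I_d$, so $\|M\|=\lambda<1$ and \eqref{assumption M} holds. The combination appearing in \eqref{eq:persistence_smallness} is
\[
\Lambda_1\phi_2-M\phi_1=\lambda D\Phi-\lambda D\Phi=0 .
\]
Hence \eqref{eq:persistence_smallness} reduces to two conditions: $\|D\Phi\|_{C^1}\le L(1-L-\lambda)/(1-L)$, and $0\le(1-L)^{r+1}-\lambda$ (strict when $r>1$).

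\textbf{Choice of $L$.} For $r=1$ I take $L=1-\sqrt\lambda$. Then $(1-L)^2=\lambda$, so the second condition holds with equality. The first bound becomes
\[
\frac{L(1-L-\lambda)}{1-L}=\frac{(1-\sqrt\lambda)(\sqrt\lambda-\lambda)}{\sqrt\lambda}=(1-\sqrt\lambda)^2=\eta_1(\lambda).
\]
For $r>1$ I take $L=1-\lambda^{1/(r+1)}$. A direct computation gives
\[
\frac{L(1-L-\lambda)}{1-L}=(1-\lambda^{1/(r+1)})(1-\lambda^{r/(r+1)})=\eta_r(\lambda).
\]
This $L$ sits at the endpoint, but both the admissible range $L<1-\lambda^{1/(r+1)}$ and the condition $(1-L)^{r+1}>\lambda$ are strict. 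So I use the strict hypothesis $\|D\Phi\|_{C^1}<\eta_r(\lambda)$ together with continuity of $L\mapsto L(1-L-\lambda)/(1-L)$ to pick $L$ slightly smaller than the endpoint while keeping $\|D\Phi\|_{C^1}\le L(1-L-\lambda)/(1-L)$. Theorem~\ref{thm:graph_persistence} then gives a unique continuous periodic invariant graph $\Gamma_\psi$ with $\psi\in C^r$, whose projection is a global attractor.

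\textbf{Exact Lagrangian property.} Here $r\ge1$ and $\psi$ is $C^1$. The graph is exact Lagrangian exactly when $\psi=c+Du$ for a constant $c$ and a periodic $u$. I would prove this in two ways, using the second as a check.

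\emph{Uniqueness argument.} Write $\Psi$ for the operator of Proposition~\ref{pro:Herman} (equivalently, the graph transform). I would show that $\Psi$ preserves the closed subset of graphs of the form $c+Du$, and that the iterates starting from $\Gamma_0$ (with $c=0$, since $\alpha_2=0$) converge to $\psi$ in $C^0$. Closedness of the subset under $C^0$ limits comes from $u$ being determined by integrating the periodic part of $\psi$.

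\emph{Direct argument.} Restrict the Liouville form $y\,dx$ to the graph, giving $\beta=\psi\,dx$. Conformality gives $f^*(d\beta)=\lambda\,d\beta$ on the invariant torus, so $d\beta=\lambda^{-n}(f^n)^*d\beta$. Pulling back by the base map over $n$ steps, with its derivative bounded by the $C^1$ estimate from the contraction, and letting $n\to\infty$ should force $d\beta=0$.

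Closedness alone gives $\psi=c+Du$ only up to cohomology. To conclude, I would use $(f^{\Phi})^*(y\,dx)=\lambda\,y\,dx+d(\Phi+\ldots)$, which makes the cohomology class of $\beta$ satisfy $[\beta]=\lambda[\beta]$, hence $[\beta]$ equals the constant class determined by $\alpha_2=0$. This yields an exact graph.

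\textbf{Main obstacle.} The delicate step is the estimate $\lambda^{-n}\|(Dg^n)^{\wedge2}\|\to0$ in the direct argument, and the Herman route avoids it. I therefore plan to rely on the invariance-under-iteration argument, checking carefully that the graph transform preserves gradient form. In coordinates the new graph $\tilde\psi$ satisfies
\[
\tilde\psi(g(x))=\lambda\psi(x)+D\Phi(x),\qquad g(x)=x+\alpha+\lambda\psi(x)+D\Phi(x).
\]
If $\psi=c+Du$, then $g=D\big(\tfrac{|x|^2}{2}+\alpha\cdot x+\lambda u+\Phi\big)+\lambda c$ is a gradient map. Consequently $\tilde\psi\,dx=\lambda\psi\,dx+d\Phi$ pulled back by $g^{-1}$ is closed, with class determined by the constant part. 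This preservation step is where I would spend the effort.
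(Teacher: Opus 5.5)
Your reduction to Theorem~\ref{thm:graph_persistence} is the same as the paper's: the same parameters, the cancellation $\Lambda_1\phi_2-M\phi_1=0$, $L=1-\sqrt\lambda$ for $r=1$, and $L$ just below $1-\lambda^{1/(r+1)}$ for $r>1$.

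For the exact Lagrangian property you take a genuinely different route. The paper works on the invariant graph itself. With $W=D\psi-D\psi^T$ and $A=Dg$ it derives $W(g(x))=A(x)^{-T}-A(x)^{-1}$. It then gets $W\equiv0$ from $\lambda\|Dg^{-1}\|_{C^0}^2<1$ in the strict range, and from Lemma~\ref{lem:strict_inverse_endpoint} plus a maximum argument at the endpoint. Exactness is proved afterwards by pulling back \eqref{eq:primitive_relation}.

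You instead propagate exactness along the graph-transform iteration and pass to the limit, and this works:
\begin{itemize}
\item The graph transform is conjugate to the Herman--Mather operator: $g_{\tilde\psi}=T(g_\psi)$.
\item Start from $\psi_0=0$. Its base map $\mathrm{Id}+\alpha+D\Phi$ lies in $L_L^{(1)}$, since $\|D^2\Phi\|_{C^0}\le L(1-L-\lambda)/(1-L)<L$.
\item Step~1 of the proof of Theorem~\ref{thm:DSM_Cr_smoothness} then keeps every base map a $C^1$ diffeomorphism with $\|Dg_n-I_d\|_{C^0}\le L$.
\item The $C^0$ contraction has factor $\lambda/(1-L)\le\sqrt\lambda<1$, even at the endpoint, so $\psi_n\to\psi$ uniformly.
\item Gradients of periodic functions are closed under uniform limits.
\end{itemize}
Your route needs no equality analysis at the endpoint for the Lagrangian part, and it gives closedness and exactness together. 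The paper's route instead proves a statement about any $C^1$ invariant graph with $\|Dg-I_d\|_{C^0}\le1-\sqrt\lambda$, however that graph was obtained.

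The preservation step, which you flag as the main work, needs correcting. $\tilde\psi\cdot dx$ is not the pullback of $\lambda\psi\cdot dx+d\Phi$ by $g^{-1}$; composing the coefficients with $g^{-1}$ is not pulling back. Set $\iota_\psi(x)=(x,\psi(x))$. From $F\circ\iota_\psi=\iota_{\tilde\psi}\circ g$ and \eqref{eq:primitive_relation}, what actually holds is
\[
g^*(\tilde\psi\cdot dx)=\lambda\,\psi\cdot dx+d(G\circ\iota_\psi),
\]
with $G$ from \eqref{eq:primitive_G}. This gives closedness. Since $g$ is homotopic to the identity, it also gives $[\tilde\psi\cdot dx]=\lambda[\psi\cdot dx]$, so a zero class stays zero.

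Alternatively, write $\tilde\psi(z)=z-\alpha-g^{-1}(z)$. Then $D\tilde\psi=I_d-(Dg)^{-1}\circ g^{-1}$ is symmetric because $Dg=I_d+\lambda D\psi+D^2\Phi$ is. This is where the gradient structure of $g$ really enters; "$g$ is a gradient map" does not by itself imply the pullback formula you wrote.

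The direct argument you set aside also runs in the wrong direction. Estimating $d\beta$ through $(g^n)^*$ costs a factor $\lambda^{-n}\|Dg^n\|^2$, which equals $\lambda^{-n}\to\infty$ already for $\Phi=0$. The usable form is $d\beta=\lambda(g^{-1})^*d\beta$, with factor $\lambda\|Dg^{-1}\|_{C^0}^2\le\lambda/(1-L)^2$. That factor equals $1$ at $L=1-\sqrt\lambda$, which is exactly why the paper needs Lemma~\ref{lem:strict_inverse_endpoint} there.
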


The proof first applies Theorem~\ref{thm:graph_persistence} to
construct the $C^1$ graph. The conformal pullback identity for the
symplectic form then proves that its restriction to the graph vanishes;
at the endpoint, this requires an analysis of equality in the norm
estimates. The corresponding identity for the Liouville form gives
exactness. Higher regularity follows by optimizing the parameter $L$
in the contraction estimates. These arguments are carried out
separately in Theorems~\ref{thm:Lagrangian} and~\ref{thm:r-map}.

For $d=1$, the $C^1$ persistence conclusion agrees with  \cite[Theorem~1]{HLW26}. {The bound is optimal as a uniform
condition to get $C^1$ regularity , by \cite[Theorem~2]{HLW26}; the same
obstruction extends to higher dimensions by taking products with
integrable maps.  Proposition~\ref{prop:C2_endpoint} and Remark \ref{rem:higher_endpoint_counterexamples} show that the $C^1$ threshold in
Theorem~\ref{thm:Lagrangian} does not extend to the corresponding
$C^r$ threshold for $r>1$. The comparison with the destruction results for invariant
Lagrangian graphs provided in ~\cite{SW26} is given in
Section~\ref{sec:ecs_sharpness}.

\subsection{Regularity in dimension one}

The upper bounds $\eta_r(\lambda)$ introduced above tend to zero as $r\to\infty$, so they
do not give $C^\infty$ regularity for a fixed nonzero perturbation.
In dimension one, the rotation number is somehow related to the
regularity and can be used to derive information about. The conservative theory of invariant curves was developed
by Herman, Salamon, and Katznelson--Ornstein~\cite{Her83,Her86,Sa,KO}.
For dissipative maps, we ask whether irrational rotation number allows
an invariant graph to inherit the regularity of the ambient map,
and what regularity must initially be assumed of the graph.
Hall's smooth annulus maps with attracting Lipschitz circles carrying
Denjoy dynamics~\cite{Hall83} also motivate the latter question.

\begin{MainTheorem}[Regularity and the $C^1$ hypothesis]
\label{thm:main_regularity}
In dimension $d=1$, the following statements hold.
\begin{enumerate}[label=\textup{(\roman*)},leftmargin=2em]
\item Let $k\ge2$ be an integer, or let $k=\infty$, and let
$F_{\Lambda,\alpha}^{\phi}$ be a $C^k$ dissipative twist map
\eqref{eq:dissipative_map} satisfying \eqref{dissp}.
If it preserves a graph defined by $\psi\in C^1(\mathbb T,\mathbb R)$
and the induced circle map has irrational rotation number, then
$\psi\in C^k(\mathbb T,\mathbb R)$.

\item For every $\lambda\in(0,1)$ and
$\omega\in\mathbb R\setminus\mathbb Q$, there are
$t_*\in(0,1+\lambda)$ and $\alpha_*\in\mathbb R$ such that the
real-analytic ECS map \eqref{eq:conformal_symplectic_twist_map} with
\[
 \Phi(x)=-\frac{t_*}{4\pi^2}\cos(2\pi x),\qquad \alpha=\alpha_*,
\]
preserves a graph defined by
$\psi_*\in C^{0,1}(\mathbb T,\mathbb R)\setminus C^1(\mathbb T,\mathbb R)$.
The induced degree-one lift has rotation number $\omega$.
\end{enumerate}
\end{MainTheorem}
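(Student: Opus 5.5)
For part (i), I will work with the induced circle map $g(x)=x+\alpha_1+\Lambda_1\psi(x)+\phi_1(x)$. Since $\psi\in C^1$, $g$ is a $C^1$ circle diffeomorphism; invariance of the graph forces $g$ to be a homeomorphism, and the Herman--Mather characterization (Proposition~\ref{pro:Herman}) gives this as well. Invariance reads
\[
 \psi(g(x))=\alpha_2+\Lambda_2\psi(x)+\phi_2(x).
\]
Differentiating formally, $u=\psi'$ satisfies $u(g(x))\,g'(x)=\Lambda_2u(x)+\phi_2'(x)$ with $g'=1+\Lambda_1u+\phi_1'$. This is a Riccati-type cocycle equation: the slope field $u$ is an invariant section of the projectivized action of $DF$. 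The plan is a bootstrap in which I show $\psi\in C^j\Rightarrow\psi\in C^{j+1}$ for $j<k$.

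Suppose $\psi\in C^j$. Then $g\in C^j$, and the graph is an invariant $C^j$ curve. The $(j+1)$-jet bundle over the curve carries an affine action induced by $F$, and the candidate $(j+1)$-th derivative $v$ of $\psi$ must satisfy a linear equation
\[
 v(g(x))\,g'(x)^{\,j+1}=\Lambda_2 v(x)-\Lambda_1 v(x)\,\psi'(g(x))\,g'(x)^{\,j}+R_j(x),
\]
with $R_j$ continuous. Equivalently, the equation for $v$ is a fibered contraction provided $\det DF/(g')^{j+2}$, or the analogous ratio, has negative Lyapunov exponent along the orbit measure. This is the key step.

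I would use the irrational rotation number as follows. Since $g$ is $C^1$ with irrational rotation number, every invariant probability measure $\mu$ of $g$ satisfies $\int\log g'\,d\mu=0$. Along the invariant curve, the two exponents of $DF$ are the tangential exponent $\int\log g'\,d\mu=0$ and the normal exponent $\int\log\det DF\,d\mu<0$. So the normal direction contracts strictly faster than every power of the tangential rate, giving uniform $r$-normal hyperbolicity for all $r$. By unique ergodicity and a uniform subadditive argument, this becomes a uniform estimate: there exist $N$ and $\theta<1$ with
\[
 \sup_x \prod_{i<N}\frac{\det DF(g^i x)}{g'(g^ix)^{\,j+2}}<\theta.
\]
The graph-transform operator on $(j+1)$-jets is then a contraction for the $N$-th iterate. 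By the fiber contraction theorem, or the $C^r$ section theorem of Hirsch--Pugh--Shub, the invariant section is the unique fixed point, is continuous, and coincides with the $(j+1)$-th derivative of $\psi$. The final identification uses the usual argument that uniform limits of derivatives of graph-transform iterates are derivatives. I will start the iterates from a smooth approximation $\psi_n\to\psi$ in $C^j$ and use the uniqueness of the invariant continuous graph near $\psi$, which is valid because of the normal contraction.

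The case $k=\infty$ follows by running the induction over all $j$. The main obstacle I expect is making the uniformity rigorous. I need the tangential rate $g'$ to be subexponential uniformly, which holds because $\frac1n\log (g^n)'(x)\to0$ uniformly for $C^1$ circle diffeomorphisms with irrational rotation number, by unique ergodicity. I also need uniqueness of the invariant $C^{j}$ graph so that the jet fixed point is the actual derivative. I would handle the latter by working locally in a tubular neighbourhood with the $N$-th iterate.

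For part (ii), I would take the one-parameter family $t\mapsto\Phi_t$ and use Aubry--Mather-type theory for conformally symplectic twist maps (Mar\`o--Sorrentino). For each $t$ and each $\omega$ there is an $\alpha(t,\omega)$ whose Birkhoff attractor contains an invariant Lipschitz graph of rotation number $\omega$, namely the graph of the derivative of the discounted weak KAM solution. For small $t$, Theorem~\ref{thm:main_lagrangian} makes this graph $C^1$. For $t$ close to $1+\lambda$, the $C^1$ bound fails, and the example of \cite{HLW26} shows that the graph must break $C^1$: there is a point where $g'$ would have to vanish, as in Hall's example. I would set $t_*$ to be the supremum of the parameters for which the graph at rotation number $\omega$ is $C^1$, and then argue the following. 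The set of Lipschitz invariant graphs is closed under limits. At $t_*$ the limit graph is still Lipschitz, but it cannot be $C^1$: otherwise part (i) would make it analytic-smooth, and an openness argument would extend $C^1$-ness beyond $t_*$. The openness is the delicate part, and I would prove it using the strict normal contraction established in part (i).
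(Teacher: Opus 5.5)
Your part (i) follows essentially the same route as the paper. Unique ergodicity gives $\int\log D\bar g\,d\mu=0$, hence uniform subexponential bounds on $D(\bar g^{\,n})$. Dissipation then dominates $(g')^{j+2}$ after $N$ iterates, and the $C^j$ section theorem closes the induction. The paper applies the section theorem directly to the slope map $\bar G(\theta,s)=(\Lambda_2 s+D\phi_2)/(1+\Lambda_1 s+D\phi_1)$ over $\bar g$, made global by a cutoff. This spares it the separate identification of your jet fixed point with $\psi^{(j+1)}$.

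One step you assert without proof is that $g$ is a diffeomorphism. The hypothesis $\psi\in C^1$ only gives a $C^1$ homeomorphism, and you need $Dg>0$ before taking $\log g'$. It follows from $\det DF(x,\psi(x))=Dg(x)\,(\Lambda_2-\Lambda_1\psi'(g(x)))\ge\delta>0$. The same identity shows that the coefficient of $v(x)$ in your jet equation should be $\Lambda_2-\Lambda_1\psi'(g(x))=\det DF/g'$, with no factor $g'(x)^j$. Your ratio $\det DF/(g')^{j+2}$ is nonetheless the correct one.

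Part (ii) has a genuine gap exactly where you locate the difficulty. Normal contraction at a $C^1$ (hence, by part (i), smooth) limiting graph gives, by persistence with parameters, a $C^1$ family of invariant graphs for $(t,\alpha)$ near $(t_*,\alpha_*)$. Nothing makes their rotation number equal to $\omega$, so you do not yet get any $t>t_*$ in your set. The missing idea is to use the drift to restore the rotation number:
\begin{itemize}
\item Differentiate the Herman--Mather identity $g+\lambda g^{-1}=(1+\lambda)x+(1-\lambda)\alpha+\phi_t$ in $\alpha$ at $(t_*,\alpha_*)$. With $h=g_*^{-1}$ and $v=\partial_\alpha g$, this gives $v-\lambda\,Dh\,(v\circ h)=1-\lambda$.
\item The subexponential bounds then give $v=(1-\lambda)\sum_{j\ge0}\lambda^jD(g_*^{-j})\ge1-\lambda$, so $g_{t_*,\alpha_-}<g_*<g_{t_*,\alpha_+}$ pointwise.
\item Since $\rho(g_*)=\omega$ is irrational (the paper conjugates $g_*$ to $x+\omega$ by Denjoy), this yields the strict bracket $\rho(g_{t_*,\alpha_-})<\omega<\rho(g_{t_*,\alpha_+})$.
\item The bracket survives for $t$ slightly above $t_*$. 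The intermediate value theorem in $\alpha$ then produces a graph with rotation number exactly $\omega$.
\end{itemize}

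You are also missing the a priori bounds that make the supremum argument work, and the source you propose for them is not correct. Conformally symplectic Aubry--Mather theory produces invariant sets that are Lipschitz graphs over closed subsets of the circle, not invariant graphs over all of $\mathbb T$. In fact $F_{t,\alpha}$ has no continuous invariant graph for any $\alpha$ once $t\ge1+\lambda$. Saying that Lipschitz invariant graphs are closed under limits does not produce a limit; you need equicontinuity.

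The paper obtains everything from the Herman--Mather identity. Since $g$ and $\lambda g^{-1}$ are both increasing, $\|g\|_{\mathrm{Lip}}\le1+\lambda+t$ and $\|g^{-1}\|_{\mathrm{Lip}}\le(1+\lambda+t)/\lambda$. This yields:
\begin{itemize}
\item uniform Lipschitz bounds on $\psi$, and bounds on $\|\psi\|_{C^0}$ and $|\alpha-\rho(g)|$;
\item that the limit $g_*$ is still a homeomorphism, with $g_j^{-1}\to g_*^{-1}$;
\item $t<1+\lambda$, so $t_*$ is finite and lies in $(0,1+\lambda)$.
\end{itemize}
The appeal to HLW26 and Hall does not supply this last bound. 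With these two ingredients your scheme goes through, whether $t_*$ is the supremum over $C^1$ graphs, as you propose, or over continuous graphs, as in the paper.
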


The first part requires neither a Diophantine condition nor a
smallness assumption. The induced $C^1$ circle diffeomorphism is
uniquely ergodic~\cite{Her79}. The uniform ergodic
theorem~\cite[Theorem~6.19]{Wal82}, applied to its logarithmic
derivative, gives uniform subexponential bounds on the derivatives of its
iterates. Combined with normal contraction, these estimates allow
successive applications of the $C^r$ section theorem~\cite{HPS77}.
The details are given in the proof of
Theorem~\ref{thm:C_infty_smoothness}.

The second part shows that the initial $C^1$ assumption cannot be
weakened to Lipschitz continuity, even for analytic maps. Its proof
fixes the irrational rotation number and takes a maximal amplitude
in a sinusoidal family. Uniform Lipschitz bounds give a limiting
invariant graph. If this graph were $C^1$, the first part would make
it smooth and the derivative estimates would imply normal
hyperbolicity. Persistence, together with a suitable change of drift,
would then produce a graph with the same rotation number at a larger
amplitude, a contradiction. Proposition~\ref{prop:irrational_Lip_not_C1}
contains the construction; Remark~\ref{rem:Hall_irrational_graphs}
explains its relation to Hall's questions and the remaining question
of wandering intervals.

\medskip
\noindent\textbf{Organization of the paper.}
Section~\ref{sec:notation} fixes the notation and definitions.
Section~\ref{sec:persistence} derives the Herman--Mather formula and
proves Theorem~\ref{thm:graph_persistence} through the contraction
theorem. Section~\ref{sec:non_lagrangian} gives the non-Lagrangian
example. Section~\ref{sec:ecs} proves
Theorem~\ref{thm:main_lagrangian} and examines the sharpness of its
bounds. Section~\ref{sec:regularity} proves both parts of
Theorem~\ref{thm:main_regularity}.

\medskip
\noindent\textbf{Acknowledgments.} J.E. Massetti has been supported by the INdAM GNAMPA Project ``Growth of Sobolev norms and energy cascades in dispersive Hamiltonian PDE " - CUP E53C25002010001 and acknowledges also the support of the Department of Excellence
grant MatMod@TOV (2023-27), awarded to the Department of Mathematics at University of Rome
Tor Vergata. D. Kong and L. Wang are partially supported by the National Natural Science Foundation of China (Grant No. 12671217 and 12122109).

\section{Functional setting, notations and definitions}\label{sec:notation}
	{We fix the notation, function spaces, and geometric definitions used below. These conventions apply for every $d\ge1$.}

	\subsection{Basic notations and global attractors}
	Let $\mathbb{T}^d = \mathbb{R}^d / \mathbb{Z}^d$ denote the standard $d$-dimensional torus. For any vector $x \in \mathbb{R}^d$, we write its coordinates as $x = (x_1, \dots, x_d)^T$. Throughout this paper, unless otherwise specified, $|\cdot|$ denotes the standard Euclidean norm on $\mathbb{R}^d$, $|v| = \big(\sum_{i=1}^d v_i^2\big)^{1/2}$, and $\|\cdot\|$ denotes the induced matrix operator norm on $\operatorname{Mat}_d(\mathbb{R})$,
	\[
	\|M\| := \sup_{v \neq 0} \frac{|Mv|}{|v|} = \max_{|v|=1} |Mv|.
	\]
	{We write $\mathrm{Id}$ for an identity map and $I_d$ for the $d\times d$ identity matrix.}

We write $\theta\in\mathbb T^d$ for a base point on the torus,
$x\in\mathbb R^d$ for a lift, and $y\in\mathbb R^d$ for the fiber
coordinate. A cylinder map is denoted by $f$ and its lift by $F$.
For the induced base map, we write $\bar g:\mathbb T^d\to\mathbb T^d$
and $g:\mathbb R^d\to\mathbb R^d$ for its degree-one lift.
A periodic graph function $\psi$ descends to $\bar\psi$, and its graph
$\Gamma$ projects to $\bar\Gamma$. Periodic coefficient functions and
their descended functions on the torus are identified when the
meaning is clear from the context.


We shall endow $\mathbb T^d$ with the standard $\operatorname{mod}-1$ metric and the cylinder with the product one:
\[
 d_{\mathbb T^d}(\theta,\theta')
 :=\min_{m\in\mathbb Z^d}|x-x'-m|,\qquad
 \bar d((\theta,y),(\theta',y'))
 :=d_{\mathbb T^d}(\theta,\theta')+|y-y'|,
\]
where $x,x'$ are arbitrary lifts of $\theta,\theta'$.
A set $\bar B$ is bounded if
$\sup_{\bar z,\bar z'\in\bar B}\bar d(\bar z,\bar z')<\infty$,
equivalently if its fiber coordinates are bounded.
For any pair of nonempty sets $\bar A,\bar B$, we set
\[
 \bar d(\bar A,\bar B)
 :=\inf_{\bar a\in\bar A,\bar b\in\bar B}\bar d(\bar a,\bar b).
\]

	\begin{Definition}[Global attractor, cf.~{\cite[p.~17]{Hal10}}]\label{def:Global Attractor}
		Let $f \colon \mathbb{T}^d \times \mathbb{R}^d \to \mathbb{T}^d \times \mathbb{R}^d$ be a continuous map on the cylinder equipped with the metric $\bar{d}$. An invariant set $\bar{\mathcal{A}} \subset \mathbb{T}^d \times \mathbb{R}^d$ is called a \emph{global attractor} for $f$ if it satisfies the following two conditions:
		\begin{enumerate}[label=(\roman*), font=\upshape]
			\item \emph{Maximality:} $\bar{\mathcal{A}}$ is a maximal compact invariant set; that is, $\bar{\mathcal{A}}$ is compact, invariant ($f(\bar{\mathcal{A}}) = \bar{\mathcal{A}}$), and contains every compact invariant set of $f$ on the cylinder.
			\item \emph{Global attraction:} $\bar{\mathcal{A}}$ attracts every bounded set in $\mathbb{T}^d \times \mathbb{R}^d$. Specifically, for any bounded set $\bar{B} \subset \mathbb{T}^d \times \mathbb{R}^d$ and any $\varepsilon > 0$, there exists an integer $n_0 = n_0(\varepsilon, \bar{B}) \ge 1$ such that $f^n(\bar{B}) \subset N_\varepsilon(\bar{\mathcal{A}})$ for all $n \ge n_0$, where $N_\varepsilon(\bar{\mathcal{A}}) := \{ \bar{z} \in \mathbb{T}^d \times \mathbb{R}^d \mid \bar{d}(\bar{z}, \bar{\mathcal{A}}) < \varepsilon \}$.
		\end{enumerate}
	\end{Definition}
	
	\begin{Remark}\label{rem:attractor_maximality}

Compactness, invariance, and attraction of every bounded set imply
maximality in Definition~\ref{def:Global Attractor}. Indeed, let
$\bar J$ be a compact invariant set. Since $\bar J$ is bounded and
$f^n(\bar J)=\bar J$, attraction gives
$\bar J\subset N_\varepsilon(\bar{\mathcal A})$ for every
$\varepsilon>0$. Compactness of $\bar{\mathcal A}$ then implies
$\bar J\subset\bar{\mathcal A}$.

	\end{Remark}
	\subsection{Function spaces and multilinear maps}
{For any integer $j\ge1$, let} $L^j(\mathbb{R}^d, \mathbb{R}^d)$ be the finite-dimensional Banach space of all $j$-multilinear maps from $(\mathbb{R}^d)^j = \mathbb{R}^d \times \cdots \times \mathbb{R}^d$ to $\mathbb{R}^d$, equipped with the standard multilinear operator norm:
	\[
	\|A\|_{L^j} := \sup_{\substack{v_1, \dots, v_j \neq 0}} \frac{|A(v_1, \dots, v_j)|}{|v_1| \cdots |v_j|} = \sup_{|v_1| \le 1, \dots, |v_j| \le 1} |A(v_1, \dots, v_j)|.
	\]
	We denote by $\mathcal{S}^j(\mathbb{R}^d, \mathbb{R}^d) \subset L^j(\mathbb{R}^d, \mathbb{R}^d)$ the closed subspace consisting of all symmetric $j$-multilinear maps, which inherits the same multilinear operator norm.
Of course, at any $x\in\mathbb R^d$, one has
$D^j\phi(x)\in\mathcal S^j(\mathbb R^d,\mathbb R^d)$.
	
	\smallskip
	
	Let $C^k(\mathbb{T}^d, \mathbb{R}^d)$ be the Banach space of real-valued, $\mathbb{Z}^d$-periodic functions of class $C^k$ on $\mathbb{R}^d$ {($k\in\mathbb Z_{\ge0}$)} equipped with the norm
\[
 \|\phi\|_{C^k}
 :=\max\left\{\|\phi\|_{C^0},\,
 \max_{1\le j\le k}\sup_{x\in\mathbb R^d}
 \|D^j\phi(x)\|_{L^j}\right\}.
\] For a continuous map $\phi \colon \mathbb{R}^d \to \mathbb{R}^d$, its $C^0$-norm is defined by $$\|\phi\|_{C^0} := \sup_{x \in \mathbb{R}^d} |\phi(x)|.$$ For $C^1$-smooth maps, the $C^1$-norm is given by $\|\phi\|_{C^1} := \max \big\{ \|\phi\|_{C^0}, \; \|D\phi\|_{C^0} \big\}$, where $\|D\phi\|_{C^0} := \sup_{x \in \mathbb{R}^d} \|D\phi(x)\|$ is determined by the matrix operator norm.

	{The Lipschitz seminorm of a map} $\phi \colon \mathbb{R}^d \to \mathbb{R}^d$ is defined by
	\[
	\|\phi\|_{\mathrm{Lip}} := \sup_{x \neq y} \frac{|\phi(x) - \phi(y)|}{|x - y|}.
	\]
	The space $C^{0,1}(\mathbb{T}^d, \mathbb{R}^d)$ denotes the Banach space of $\mathbb{Z}^d$-periodic Lipschitz continuous maps equipped with the norm $\|\phi\|_{C^{0,1}} := \|\phi\|_{C^0} + \|\phi\|_{\mathrm{Lip}}$. In the one-dimensional scalar case, $C^{0,1}(\mathbb{T}, \mathbb{R})$ forms a Banach algebra.
For $\beta\in(0,1)$, the H\"older seminorm of a vector-valued map $u$ is
\[
 [u]_\beta:=\sup_{x\ne y}\frac{|u(x)-u(y)|}{|x-y|^\beta}.
\]
For multilinear-map-valued functions, we use the same notation with the
corresponding operator norm in the numerator.

	When $r > 1$ is a non-integer, uniquely decomposed as $r = k + \beta$ with $k = \lfloor r \rfloor \ge 1$ and $\beta \in (0, 1)$, the non-integer regularity space $C^r(\mathbb{T}^d, \mathbb{R}^d)$ is identified with the H\"{o}lder space $C^{k, \beta}(\mathbb{T}^d, \mathbb{R}^d)$, endowed with the standard norm
	\[
	\|\phi\|_{C^r} = \|\phi\|_{C^{k, \beta}} := \|\phi\|_{C^k} + [D^k \phi]_{\beta}.
	\]

For $r=0$ or a real number $r\ge1$, we use the affine space
\[
 \mathrm{Id}+C^r(\mathbb T^d,\mathbb R^d)
 :=\{g=\mathrm{Id}+\varphi:\varphi\in C^r(\mathbb T^d,\mathbb R^d)\},
\]
with the metric $d_{C^r}(g_1,g_2):=\|g_1-g_2\|_{C^r}$.
This metric is well defined because $g_1-g_2$ is periodic, and the space
is complete. We use the same convention for
$\mathrm{Id}+C^{0,1}(\mathbb T^d,\mathbb R^d)$.
A torus map $\bar g$ homotopic to the identity has lifts satisfying
$g(x+k)=g(x)+k$ for $k\in\mathbb Z^d$; its $C^r$ lifts belong
to this affine space (see \cite{Her79}).
For $L\in(0,1)$ and $r\ge1$, define
\[
 L_L^{(r)}:=\{g\in\mathrm{Id}+C^r(\mathbb T^d,\mathbb R^d):
                  \|Dg-I_d\|_{C^0}\le L\}.
\]
We also use the complete metric space
\[
 L_L^{(0)}:=\{g\in\mathrm{Id}+C^{0,1}(\mathbb T^d,\mathbb R^d):
                  \|g-\mathrm{Id}\|_{\mathrm{Lip}}\le L\},
 \qquad d(g,h)=\|g-h\|_{C^0}.
\]
The superscript $0$ here refers to the metric used for the iteration;
its elements are Lipschitz lifts. Completeness follows because this
set is closed in $\mathrm{Id}+C^0(\mathbb T^d,\mathbb R^d)$.

\subsection{Twist maps, invariant graphs, and Lagrangian structures}
	
	\begin{Definition}[Dissipative Twist Map]\label{DTM}
		A dissipative twist map on the high-dimensional (infinite) cylinder $\mathbb{T}^d \times \mathbb{R}^d$ is a $C^1$ diffeomorphism $f \colon \mathbb{T}^d \times \mathbb{R}^d \to \mathbb{T}^d \times \mathbb{R}^d$ that admits a lift $F \colon \mathbb{R}^{2d} \to \mathbb{R}^{2d}$, denoted as $F(x, y) = (X(x, y), Y(x, y))$, satisfying the following conditions:
		\begin{enumerate}[label=(\roman*), font=\upshape]
			\item {(Lift condition) The map $f$ is isotopic to the identity, and $F(x+k,y)=F(x,y)+(k,0)$ for all $k\in\mathbb Z^d$;}
			\item (Twist condition) The map $(x, y) \mapsto (x, X(x, y))$ is a diffeomorphism on $\mathbb{R}^{2d}$;
			\item (Dissipative condition) There exist constants $\lambda \in (0, 1)$ and $\delta > 0$ such that for all $(x, y) \in \mathbb{R}^{2d}$:
			\[
			0 < \delta \le \det(DF(x, y)) \le \lambda.
			\]
		\end{enumerate}
	\end{Definition}
	
	Writing $\vartheta:=\sum_{i=1}^d y_i\,dx_i$ for the Liouville form,
we use the convention
\[
 \Omega:=-d\vartheta=\sum_{i=1}^d dx_i\wedge dy_i.
\]
The same notation denotes these forms on the cylinder and their lifts.

\begin{Definition}[Conformal Symplectic Twist Map]\label{def:Conformally Symplectic Twist Map}
		{A conformal symplectic twist map on the cylinder} is a $C^1$ diffeomorphism $f \colon \mathbb{T}^d \times \mathbb{R}^d \to \mathbb{T}^d \times \mathbb{R}^d$ admitting a lift $F \colon \mathbb{R}^{2d} \to \mathbb{R}^{2d}$, denoted $F(x,y) = (X(x,y), Y(x,y))$, satisfying:
		\begin{enumerate}[label=(\roman*), font=\upshape]
			\item {(Lift condition) The map $f$ is isotopic to the identity, and $F(x+k,y)=F(x,y)+(k,0)$ for all $k\in\mathbb Z^d$;}
			\item (Twist condition) The map $(x,y) \mapsto (x, X(x,y))$ is a diffeomorphism on $\mathbb{R}^{2d}$;
			\item (Conformal symplectic condition) There exists $\lambda \in (0, 1)$ such that
			\[
			{F^*\Omega=\lambda\Omega,}
			\]

where $\Omega$ is the standard symplectic form and $F^*$ denotes pullback.
 Equivalently, the 1-form
\(\Theta:=F^*(y\cdot dx)-\lambda y\cdot dx\) is closed in the distributional sense. This interpretation is needed because $F\in C^1$ only guarantees that $\Theta$ is continuous; the identity follows by approximating $F$ locally in $C^1$ by smooth maps and passing to the limit in $d\Theta=\lambda\Omega-F^*\Omega$. The map is called \emph{exact conformally symplectic} if this form is exact on \(\mathbb T^d\times\mathbb R^d\): on the lift, \(\Theta=dG\) for a function \(G(x,y)\) that is \(\mathbb Z^d\)-periodic in \(x\).  By the twist condition one can solve for \(y=y(x,X)\); the  generating function
\(S(x,X):=G(x,y(x,X))\) then satisfies
\[
 dS=Y\cdot dX-\lambda y\cdot dx,
 \qquad \lambda y=-\partial_1S(x,X),\quad Y=\partial_2S(x,X).
\]
It is diagonally periodic: \(S(x+k,X+k)=S(x,X)\) for \(k\in\mathbb Z^d\).

		\end{enumerate}
	\end{Definition}
	
	\begin{Definition}[$C^0$-Invariant Graph]\label{def:invariant_graph}
		A subset $\bar{\Gamma} \subset \mathbb{T}^d \times \mathbb{R}^d$ is called a $C^0$-invariant graph of $f$ if:
		\begin{enumerate}[label=(\roman*), font=\upshape]
			\item $\bar{\Gamma} = \{(\theta, \bar{\psi}(\theta)) \mid \theta \in \mathbb{T}^d\}$, where $\bar{\psi} = (\bar{\psi}_1, \dots, \bar{\psi}_d) \colon \mathbb{T}^d \to \mathbb{R}^d$ is a continuous function;
			\item $\bar{\Gamma}$ is invariant under $f$, that is, $f(\bar{\Gamma}) = \bar{\Gamma}$.
		\end{enumerate}
	\end{Definition}
	
	\begin{Definition}[$C^0$-Invariant Lagrangian Graph]\label{lgr}

The invariant graph $\bar\Gamma$ is called a $C^0$-invariant Lagrangian
graph if the 1-form
$\bar\psi(\theta)\cdot d\theta:=\sum_{i=1}^d\bar\psi_i(\theta)d\theta_i$
is closed in the sense of distributions. Equivalently, there exist
$c\in\mathbb R^d$ and $\bar\eta\in C^1(\mathbb T^d,\mathbb R)$ such that
$\bar\psi=c+D\bar\eta$, where
$D\bar\eta=(\partial_{\theta_1}\bar\eta,\dots,
\partial_{\theta_d}\bar\eta)^T$.

	\end{Definition}

Let $\Omega^k(\mathbb T^d)$ be the space of smooth differential
$k$-forms, and let $d$ be the exterior derivative, with $d^2=0$.
The spaces of closed and exact smooth 1-forms are, respectively,
\[
 \mathcal Z^1(\mathbb T^d)
 :=\ker(d:\Omega^1(\mathbb T^d)\to\Omega^2(\mathbb T^d)),
 \qquad
 \mathcal B^1(\mathbb T^d):=d\Omega^0(\mathbb T^d).
\]
The first de Rham cohomology group is
\begin{equation}\label{eq:de_Rham_def}
 H^1_{\mathrm{dR}}(\mathbb T^d)
 :=\frac{\mathcal Z^1(\mathbb T^d)}{\mathcal B^1(\mathbb T^d)}.
\end{equation}
For a closed form $\Theta$, its class $[\Theta]$ vanishes if and only
if $\Theta$ is exact. The classes $[d\theta_1],\dots,[d\theta_d]$
form a basis of $H^1_{\mathrm{dR}}(\mathbb T^d)$
(see \cite[Example~18.16]{Lee12}).

For the continuous closed 1-form in Definition~\ref{lgr}, the decomposition
$\bar\psi(\theta)\cdot d\theta=c\cdot d\theta+d\bar\eta$ defines its
cohomology class by
\begin{equation}
 [\bar\psi(\theta)\cdot d\theta]
 =\sum_{i=1}^d c_i[d\theta_i]\in H^1_{\mathrm{dR}}(\mathbb T^d).
\end{equation}
The coefficients $c_i$ are the integrals over the coordinate circles,
so this class is independent of the decomposition. A Lagrangian graph
is called \emph{exact} if this class vanishes, equivalently if $c=0$.

	\subsection{Rotation number and fiber contraction}
	
	{For $d=1$, the restriction of a dissipative twist map to an invariant graph induces a circle homeomorphism. We use the following convention for its rotation number.}
	
	\begin{Definition}[Rotation Number]\label{def:rotation_number}

Let $\bar g:\mathbb T\to\mathbb T$ be an orientation-preserving circle
homeomorphism, and let $g:\mathbb R\to\mathbb R$ be a degree-one lift.
We define
\begin{equation}\label{eq:rotation_number_def}
 \rho(g):=\lim_{n\to\infty}\frac{g^n(x)-x}{n}\in\mathbb R,
 \qquad \rho(\bar g):=\rho(g)\pmod1.
\end{equation}
The real limit exists and is independent of $x$ (see \cite{Her79}).
Replacing $g$ by $g+\ell$, $\ell\in\mathbb Z$, replaces $\rho(g)$ by
$\rho(g)+\ell$. Thus $\rho(\bar g)$ is independent of the lift and is
invariant under orientation-preserving topological conjugacy.

	\end{Definition}
	{For a $C^1$ invariant graph over $\mathbb T$, the slope of its tangent line defines an invariant section of the bundle $\mathbb T\times\mathbb R$. We use the following form of the $r$-fiber contraction condition \cite{HPS77} to study the regularity of this section.}
	
	\begin{Definition}[$r$-fiber contraction]\label{def:r-fiber contraction}
		Let $\bar{\mathcal{F}} \colon \mathbb{T} \times \mathbb{R} \to \mathbb{T} \times \mathbb{R}$ be a fiber-preserving map of the form $\bar{\mathcal{F}}(\theta, s) = \bigl(\bar{h}(\theta), \bar{G}(\theta, s)\bigr)$. For a given integer $r \ge 1$, $\bar{\mathcal{F}}$ is called an \emph{$r$-fiber contraction} if it satisfies the following conditions:
		\begin{enumerate}[label=(\roman*), font=\upshape]
			\item The base map $\bar{h} \colon \mathbb{T} \to \mathbb{T}$ is a $C^r$ diffeomorphism, and the fiber component $\bar{G} \colon \mathbb{T} \times \mathbb{R} \to \mathbb{R}$ is of class $C^r$;
			\item There exists a constant $K \in (0, 1)$ such that for all $\theta \in \mathbb{T}$ and $s_1, s_2 \in \mathbb{R}$,
			\[
			|\bar{G}(\theta, s_1) - \bar{G}(\theta, s_2)| \le K |s_1 - s_2|.
			\]
			{With $\kappa:=\sup_{\theta\in\mathbb T}|D\bar h^{-1}(\theta)|$, we also require}
			\[
			K \cdot \kappa^r < 1.
			\]
		\end{enumerate}
	\end{Definition}

	\vspace{1em}

\section{Persistence of invariant graphs}\label{sec:persistence}
 \rev{Theorem~\ref{thm:graph_persistence} is based on a
Herman--Mather like formula in the present context that we shall prove below together with a contraction theorem. The proof of Theorem \ref{thm:graph_persistence} concludes Section \ref{sec:fixed_point_regularity}.} \\
Let us so consider the dissipative twist map \eqref{eq:dissipative_map},
with $M$ as in the Introduction. Set
\[
 C:=\alpha_1+\Lambda_1\alpha_2-M\alpha_1.
\]
The change of fiber coordinates $w=\Lambda_1y$ conjugates the integrable
map as
\[
 (x,w)\longmapsto(x+\alpha_1+w,\,\Lambda_1\alpha_2+Mw),
\]
so the condition $\|M\|<1$ gives contraction in the fiber coordinate $w$.

\begin{Remark}\label{rem:det}
The block determinant formula gives
\[
 \det DF_{\Lambda,\alpha}^{\phi}(x,y)
 =\det\Lambda_2\,
   \det\bigl(I_d+D\phi_1(x)-\Lambda_1\Lambda_2^{-1}D\phi_2(x)\bigr).
\]
This is a continuous periodic function of $x$, independent of $y$.
By compactness of $\mathbb T^d$, \eqref{dissp} gives a positive minimum
and a maximum strictly less than one. Hence, for this family,
\eqref{dissp} is equivalent to the existence of constants $\delta>0$
and $\lambda\in(0,1)$ such that
$0<\delta\le\det DF_{\Lambda,\alpha}^{\phi}\le\lambda<1$.
Global invertibility and the isotopy condition are part of the standing
assumption that the map is a dissipative twist map.
\end{Remark}

\subsection{The generalized Herman--Mather formula}\label{sec:herman_mather}
	{The Herman--Mather formula characterizes continuous invariant graphs by a functional equation for their induced base maps. It will define the operator used in the contraction argument.}
	
	\begin{Proposition}\label{pro:Herman}
		The map $F_{\Lambda,\alpha}^{\phi}$ admits a $C^0$-invariant graph $\Gamma = \{(x, \psi(x)) \mid x \in \mathbb{R}^d\}, \psi \in C^0(\mathbb{T}^d, \mathbb{R}^d)$ if and only if there exists a homeomorphism $g \in \mathrm{Id} + C^0(\mathbb{T}^d, \mathbb{R}^d)$ such that for all $x \in \mathbb{R}^d$,
		\begin{equation}\label{eq:Herman_formula}
			g(x) + M g^{-1}(x) = (I_d + M)x + C + \phi_1(x) - M \phi_1(g^{-1}(x)) + \Lambda_1 \phi_2(g^{-1}(x)),
		\end{equation}
		{Here $M$ and $C$ are as above. The graph function $\psi$ and the induced map $g$ are related by}
		\begin{equation}\label{eq:psi_explicit_formula}
			\psi(x) = \Lambda_1^{-1}\big(g(x) - x - \alpha_1 - \phi_1(x)\big).
		\end{equation}
	\end{Proposition}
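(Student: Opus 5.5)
Both directions come down to eliminating the fiber variable $y$ from the invariance equations through the twist relation. The induced base map serves as the unknown.

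\emph{Necessity.} Let $\Gamma=\{(x,\psi(x))\}$ be a continuous periodic invariant graph. I define
\[
g(x):=x+\alpha_1+\Lambda_1\psi(x)+\phi_1(x),
\]
which is the first component of $F_{\Lambda,\alpha}^{\phi}(x,\psi(x))$. Invariance of $\Gamma$ then reads
\[
\psi(g(x))=\alpha_2+\Lambda_2\psi(x)+\phi_2(x).
\]
Since $\psi,\phi_1$ are periodic, $g\in\mathrm{Id}+C^0(\mathbb T^d,\mathbb R^d)$. The definition of $g$ gives \eqref{eq:psi_explicit_formula} directly.

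To see that $g$ is a homeomorphism I use invariance $f(\bar\Gamma)=\bar\Gamma$ and injectivity of $F$. The restriction $F|_\Gamma$ is a bijection of $\Gamma$, and $\Gamma$ is homeomorphic to $\mathbb R^d$ through the projection $x$. Hence $g=\pi\circ F|_\Gamma\circ(\mathrm{Id},\psi)$ is a continuous bijection of $\mathbb R^d$. Invariance of domain, or properness coming from $g-\mathrm{Id}$ being bounded, then gives continuity of $g^{-1}$.

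Next I substitute $\psi=\Lambda_1^{-1}(g-\mathrm{Id}-\alpha_1-\phi_1)$ into the invariance relation at the point $g^{-1}(x)$ and multiply by $\Lambda_1$:
\[
g(x)-x-\alpha_1-\phi_1(x)=\Lambda_1\alpha_2+M\bigl(x-g^{-1}(x)-\alpha_1-\phi_1(g^{-1}(x))\bigr)+\Lambda_1\phi_2(g^{-1}(x)).
\]
Rearranging, and using $C=\alpha_1+\Lambda_1\alpha_2-M\alpha_1$, gives exactly \eqref{eq:Herman_formula}.

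\emph{Sufficiency.} Conversely, let $g$ be a homeomorphism in $\mathrm{Id}+C^0$ solving \eqref{eq:Herman_formula}. I define $\psi$ by \eqref{eq:psi_explicit_formula}. It is continuous and periodic because $g-\mathrm{Id}$ and $\phi_1$ are. The $x$-component of $F(x,\psi(x))$ then equals $g(x)$ by construction. Reading the computation above backwards, with $x$ replaced by $g(x)$, shows that the $y$-component equals $\psi(g(x))$. Thus $F(\Gamma)=\{(g(x),\psi(g(x)))\}=\Gamma$, where the last equality uses that $g$ is surjective. The lift relation makes this descend to $f(\bar\Gamma)=\bar\Gamma$.

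The only nonalgebraic point is the homeomorphism property in the necessity direction, in particular surjectivity of $g$. This is exactly where the set equality $f(\bar\Gamma)=\bar\Gamma$ is used, as opposed to mere inclusion. Everything else is routine algebra with $M=\Lambda_1\Lambda_2\Lambda_1^{-1}$.
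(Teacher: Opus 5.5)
Your proposal is correct and follows essentially the same route as the paper: identify $g$ with the induced base map $\pi\circ F|_\Gamma\circ\iota$, solve the first component for $\psi$, substitute at $g^{-1}(x)$ and rearrange, then reverse the computation for sufficiency, using surjectivity of $g$ to get $F(\Gamma)=\Gamma$. The only difference is minor: you get continuity of $g^{-1}$ from invariance of domain or properness, whereas the paper notes more directly that $F|_\Gamma$ is a homeomorphism of $\Gamma$, with inverse $F^{-1}|_\Gamma$, so $g^{-1}=\pi\circ F^{-1}|_\Gamma\circ\iota$ is continuous.
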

	
	\begin{proof}
		($\Rightarrow$) Assume that $F_{\Lambda,\alpha}^{\phi}$ admits a $C^0$-invariant graph $\Gamma = \{(x, \psi(x)) \mid x \in \mathbb{R}^d\}$ with $\psi \in C^0(\mathbb{T}^d, \mathbb{R}^d)$. Consider the graph {parameterization} $\iota \colon \mathbb{R}^d \to \Gamma$, $\iota(x) = (x, \psi(x))$ and the  projection $\pi \colon \Gamma \to \mathbb{R}^d$, $\pi(x, y) = x$. Since $\psi$ is continuous, $\iota$ and $\pi$ are continuous mutual inverses, hence homeomorphisms. As $F_{\Lambda,\alpha}^{\phi}$ is a global diffeomorphism and the graph $\Gamma$ is invariant, the restriction $F_{\Lambda,\alpha}^{\phi}|_{\Gamma} \colon \Gamma \to \Gamma$ is a homeomorphism onto $\Gamma$. The induced base map
		\[
		g := \pi \circ (F_{\Lambda,\alpha}^{\phi}|_{\Gamma}) \circ \iota \colon \mathbb{R}^d \to \mathbb{R}^d
		\]
		is a composition of homeomorphisms, confirming that $g$ is a global homeomorphism of $\mathbb{R}^d$ satisfying \[F_{\Lambda,\alpha}^{\phi}(x, \psi(x)) = (g(x), \psi(g(x)))\] for all $x \in \mathbb{R}^d$. In components, this reads:
		\begin{align}
			x + \alpha_1 + \Lambda_1 \psi(x) + \phi_1(x) &= g(x), \label{eq:prop_1} \\
			\alpha_2 + \Lambda_2 \psi(x) + \phi_2(x) &= \psi(g(x)). \label{eq:prop_2}
		\end{align}
		Since $\Lambda_1$ is invertible, equation \eqref{eq:prop_1} yields
		\begin{equation}\label{eq:psi_x}
			\psi(x) = \Lambda_1^{-1} \big( g(x) - x - \alpha_1 - \phi_1(x) \big).
		\end{equation}
		Because $\psi$ and $\phi_1$ are $\mathbb{Z}^d$-periodic, the displacement $g(x) - x = \alpha_1 + \Lambda_1 \psi(x) + \phi_1(x)$ is $\mathbb{Z}^d$-periodic, confirming that $g \in \mathrm{Id} + C^0(\mathbb{T}^d, \mathbb{R}^d)$.
		Since $g$ is a homeomorphism of $\mathbb{R}^d$, replacing $x$ by $g^{-1}(x)$ in \eqref{eq:prop_2} gives
		\begin{equation}\label{eq:prop_2_shifted}
			\alpha_2 + \Lambda_2 \psi(g^{-1}(x)) + \phi_2(g^{-1}(x)) = \psi(x).
		\end{equation}
		Substituting the expression \eqref{eq:psi_x} evaluated at $g^{-1}(x)$ and at $x$ into \eqref{eq:prop_2_shifted} and multiplying through by $\Lambda_1$, we obtain
		\[
		\Lambda_1 \alpha_2 + \Lambda_1 \Lambda_2 \Lambda_1^{-1} \big( x - g^{-1}(x) - \alpha_1 - \phi_1(g^{-1}(x)) \big) + \Lambda_1 \phi_2(g^{-1}(x)) = g(x) - x - \alpha_1 - \phi_1(x).
		\]
		{Rearranging the terms yields \eqref{eq:Herman_formula}.}
		
		\medskip
		($\Leftarrow$) Conversely, assume that there exists a homeomorphism $g \in \mathrm{Id} + C^0(\mathbb{T}^d, \mathbb{R}^d)$ satisfying \eqref{eq:Herman_formula}. Define $\psi:\mathbb{R}^d \to \mathbb{R}^d$ by \eqref{eq:psi_explicit_formula} and set $\Gamma = \{(x, \psi(x)) \mid x \in \mathbb{R}^d\}$. By the definition of $\psi$, the first coordinate of $F_{\Lambda,\alpha}^{\phi}(x, \psi(x))$ is identically $g(x)$. Substituting $x = g(u)$ into \eqref{eq:Herman_formula} and multiplying by $\Lambda_1^{-1}$ recovers $\alpha_2 + \Lambda_2 \psi(u) + \phi_2(u) = \psi(g(u))$ for all $u \in \mathbb{R}^d$. Hence $F_{\Lambda,\alpha}^{\phi}(x, \psi(x)) = (g(x), \psi(g(x)))$. Since $g$ is surjective, $F_{\Lambda,\alpha}^{\phi}(\Gamma) = \Gamma$, confirming that $\Gamma$ is an invariant graph.
	\end{proof}

For a homeomorphism $g\in\mathrm{Id}+C^0(\mathbb T^d,\mathbb R^d)$, define
\begin{equation}\label{eq:HM_operator}
 T(g)(x):=(I_d+M)x+C+\phi_1(x)-M\phi_1(g^{-1}(x))
            +\Lambda_1\phi_2(g^{-1}(x))-Mg^{-1}(x).
\end{equation}
Proposition~\ref{pro:Herman} identifies the continuous invariant graphs
with the fixed points of $T$ among these lifts.

\subsection{The contraction theorem}\label{sec:contraction_statement}
For $r\ge1$ and an admissible $L$, let $B_\phi^{(r)}$ denote the pair
$(\phi_1,\phi_2)\in(C^r(\mathbb T^d,\mathbb R^d))^2$ satisfying
\eqref{eq:persistence_smallness}. The spaces $L_L^{(r)}$, including
$L_L^{(0)}$, were defined in Section~\ref{sec:notation}.

\begin{Theorem}[Contraction theorem]\label{thm:DSM_Cr_smoothness}
Let $r\ge1$, and suppose that $F_{\Lambda,\alpha}^{\phi}$ and $L$
satisfy the hypotheses of Theorem~\ref{thm:graph_persistence}.
Then $T$ preserves $L_L^{(0)}$ and is a strict contraction in the
$C^0$ distance. More precisely, for $g,h\in L_L^{(0)}$,
\[
 \|T(g)-T(h)\|_{C^0}
 \le\frac{\|M\|+\|\Lambda_1\phi_2-M\phi_1\|_{\mathrm{Lip}}}{1-L}
       \|g-h\|_{C^0},
 \qquad
 \frac{\|M\|+\|\Lambda_1\phi_2-M\phi_1\|_{\mathrm{Lip}}}{1-L}<1.
\]
Its unique fixed point $g$ belongs to $L_L^{(r)}$.
The operator also preserves $L_L^{(r)}$, and the iterates
$g_0=\mathrm{Id}$, $g_{n+1}=T(g_n)$ converge uniformly to $g$.
\end{Theorem}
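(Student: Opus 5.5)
I would split the statement into three parts: $T$ maps $L_L^{(0)}$ into itself, $T$ contracts in $C^0$, and the fixed point is $C^r$ with $g_n\to g$.

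\textbf{Invariance of $L_L^{(0)}$.} Write $\Psi:=\Lambda_1\phi_2-M\phi_1$, so that
\[
T(g)=(I_d+M)\,\mathrm{Id}+C+\phi_1+(\Psi-M)\circ g^{-1}.
\]
For $g\in L_L^{(0)}$ the map $g-\mathrm{Id}$ has Lipschitz constant $L<1$. Then $g$ is a bi-Lipschitz homeomorphism of $\mathbb R^d$ (a Lipschitz perturbation of the identity), and $\|g^{-1}\|_{\mathrm{Lip}}\le (1-L)^{-1}$. Since $g^{-1}-\mathrm{Id}=-(g-\mathrm{Id})\circ g^{-1}$ is periodic, $T(g)-\mathrm{Id}$ is periodic. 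For the Lipschitz seminorm, write
\[
T(g)-\mathrm{Id}=\phi_1+M(\mathrm{Id}-g^{-1})+\Psi\circ g^{-1}.
\]
Using $\mathrm{Id}-g^{-1}=(g-\mathrm{Id})\circ g^{-1}$, its Lipschitz constant is at most $L/(1-L)$. Hence
\[
\|T(g)-\mathrm{Id}\|_{\mathrm{Lip}}\le \|\phi_1\|_{C^1}+\frac{\|M\|L+\|\Psi\|_{C^1}}{1-L}.
\]
The first line of \eqref{eq:persistence_smallness} is exactly the condition that the right side is $\le L$: multiplying by $1-L$ gives $\|\phi_1\|(1-L)+\|M\|L+\|\Psi\|\le L(1-L-\|M\|)+\|M\|L$, which is what that line says. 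So $T(g)\in L_L^{(0)}$.

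\textbf{Contraction.} Take $g,h\in L_L^{(0)}$ and set $u=g^{-1}(x)$, $v=h^{-1}(x)$. Then
\[
T(g)(x)-T(h)(x)=(\Psi-M)(u)-(\Psi-M)(v),
\]
which has norm at most $(\|M\|+\|\Psi\|_{\mathrm{Lip}})\,|u-v|$. Moreover,
\[
|u-v|=|h^{-1}(g(u))-h^{-1}(h(u))|\le (1-L)^{-1}|g(u)-h(u)|\le (1-L)^{-1}\|g-h\|_{C^0}.
\]
This gives the stated factor. It is $<1$ because $\|\Psi\|_{\mathrm{Lip}}\le\|\Psi\|_{C^1}<(1-L)^{r+1}-\|M\|\le 1-L-\|M\|$ (with $\le$ and $(1-L)^2$ when $r=1$). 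Banach's principle on the complete space $L_L^{(0)}$ gives a unique fixed point $g$, and the iterates from $g_0=\mathrm{Id}$ converge uniformly to it.

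\textbf{Regularity.} This is the main obstacle. I would first show $T$ preserves $L_L^{(r)}$: for $C^r$ maps $g$, inversion and composition keep $T(g)$ in $C^r$. Since $Dg^{-1}=(Dg\circ g^{-1})^{-1}$, the differential
\[
DT(g)=I_d+M+D\phi_1+\bigl((D\Psi-M)\circ g^{-1}\bigr)\,(Dg\circ g^{-1})^{-1}
\]
satisfies $\|DT(g)-I_d\|\le L$ by the same estimate as above. Then I would treat the derivative iteration $A_{n+1}=\mathcal D(g_n,A_n)$, with
\[
\mathcal D(g,A)=I_d+M+D\phi_1+\bigl((D\Psi-M)\circ g^{-1}\bigr)(A\circ g^{-1})^{-1},
\]
as a fiber contraction over the $C^0$-convergent base iteration. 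The Lipschitz constant in $A$ is at most $(\|M\|+\|D\Psi\|)/(1-L)^2<1$ by the second line of \eqref{eq:persistence_smallness} with $r=1$, because $A\mapsto A^{-1}$ has Lipschitz constant $(1-L)^{-2}$ on $\{\|A-I_d\|\le L\}$. By the fiber contraction theorem, $Dg_n$ converges uniformly, so $g\in C^1$ and $g\in L_L^{(1)}$.

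At the endpoint $r=1$, equality is allowed and this factor can equal $1$. There I would use the strict inequality $\|A^{-1}-B^{-1}\|<(1-L)^{-2}\|A-B\|$ unless $A,B$ are extremal. More robustly, I would show $\{Dg_n\}$ is equicontinuous and Cauchy using compactness of the ball $\|A-I_d\|\le L$. For $r>1$, I would differentiate $k$ times. The top-order term in $D^k$ of the composition with $g^{-1}$ carries $\|Dg^{-1}\|^{k+1}\le(1-L)^{-(k+1)}$, so the contraction factor is $(\|M\|+\|\Psi\|_{C^1})/(1-L)^{r+1}<1$; the lower-order terms are controlled inductively. The Hölder part $[D^k\cdot]_\beta$ is handled similarly, with the factor $(1-L)^{-(k+\beta)}$ absorbed into the same bound. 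The fiber contraction theorem, applied inductively on derivative order, then gives $g\in C^r$ and $Tg_n\to g$ in $C^{\lfloor r\rfloor}$.
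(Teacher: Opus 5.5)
Your invariance of $L_L^{(0)}$, the $C^0$ contraction and Banach fixed point, the preservation of $L_L^{(r)}$, the derivative iteration when its Lipschitz factor is $<1$, and the higher-order and H\"older steps all follow the paper's route (Lemma~\ref{lem:Lipschitz-I-G} and Steps~1--3) and are fine. The genuine gap is the $C^1$ endpoint, which the statement explicitly includes. For $r=1$ the hypotheses allow $\|M\|+\|\Lambda_1\phi_2-M\phi_1\|_{C^1}=(1-L)^2$. For instance, one may take $L=1-\|M\|^{1/2}$ and $\Lambda_1\phi_2-M\phi_1\equiv0$; this is exactly the exact conformally symplectic case behind the threshold $(1-\sqrt\lambda)^2$. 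There your linear estimate makes $A\mapsto\mathcal D(g,A)$ only nonexpansive, so neither Banach's principle nor the fiber contraction theorem yields convergence of $Dg_n$, and your two suggestions do not replace them.

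First, you hedge that $s^2\|A^{-1}-B^{-1}\|<\|A-B\|$ (with $s=1-L$) might fail for ``extremal'' pairs. If equality could occur for distinct pairs, no modulus $\omega(t)<t$ would exist and the argument would collapse. What is needed, and what Lemma~\ref{lem:strict_inverse_endpoint} proves, is strictness for \emph{every} pair of distinct matrices in the closed ball $\mathcal K_L$. This requires an equality analysis. Equality forces $By=sy$ and $Az=sz$ with $z=(A-B)y\neq0$. Since $\|(I_d-A)/L\|\le1$, this upgrades to $A^Tz=sz$. Then $|z|^2=\langle y,(A^T-sI_d)z\rangle=0$, a contradiction.

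Pointwise strictness is still not enough, because the ratio $s^2\|A^{-1}-B^{-1}\|/\|A-B\|$ approaches $1$ near $A=B=sI_d$. One needs a nonlinear modulus. Compactness of $\mathcal K_L\times\mathcal K_L$ gives a nondecreasing, upper semicontinuous $\omega$ with $\omega(t)<t$ for $t>0$ and $s^2\|A^{-1}-B^{-1}\|\le\omega(\|A-B\|)$, uniformly in $x$ and in the base map. One then needs a Boyd--Wong-type lemma for the perturbed iteration $v_{n+1}=\mathcal D(g_n,v_n)$: $e_{n+1}\le\omega(e_n)+\varepsilon_n$ with $\varepsilon_n\to0$ forces $e_n\to0$ (Lemma~\ref{lem:nonlinear_convergence_endpoint}).

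Your ``more robust'' alternative does not supply this. Compactness of the matrix ball bounds the values of $Dg_n$, not their oscillation in $x$. Any modulus-of-continuity recursion also picks up the stretching $\|g_n^{-1}\|_{\mathrm{Lip}}\le s^{-1}$ against a factor equal to $1$ at the endpoint. So equicontinuity, let alone the Cauchy property, does not follow. Equicontinuity is also unnecessary: uniform convergence in the closed set of continuous fields $\{v:\|v-I_d\|_{C^0}\le L\}$ already gives $g\in C^1$ with $Dg=\lim Dg_n$.
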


The proof first constructs the fixed point in $L_L^{(0)}$, then proves
its regularity through the derivative equations. We give these two
steps separately.

\subsection{Persistence of the Lipschitz invariant graph}
	{We first construct a Lipschitz invariant graph, then prove its higher regularity.}
	
	\begin{Lemma}\label{lem:Lipschitz-I-G}
		Let $0 < L < 1 - \|M\|$. Suppose that $F_{\Lambda,\alpha}^\phi$ is a dissipative twist diffeomorphism lift satisfying $0 < \delta \le \det DF_{\Lambda,\alpha}^\phi(x, y) \le \lambda < 1$. If the perturbation $(\phi_1, \phi_2) \in B_\phi^{(0)}$, where
		\[
		B_\phi^{(0)} = \left\{ (\phi_1, \phi_2) \in (C^{0,1}(\mathbb{T}^d, \mathbb{R}^d))^2 \;\left|\; \|\phi_1\|_{\mathrm{Lip}} + \frac{1}{1-L} \|\Lambda_1 \phi_2 - M \phi_1\|_{\mathrm{Lip}} \le \frac{L(1 - L - \|M\|)}{1-L} \right. \right\},
		\]
		then there exists a unique $g\in L_L^{(0)}$ such that $T(g)=g$.
        Moreover, the map $F_{\Lambda,\alpha}^{\phi}$ admits a unique Lipschitz invariant graph $\Gamma = \{(x, \psi(x)) \mid x \in \mathbb{R}^d\}$, whose projection $\bar{\Gamma}$ on $\mathbb{T}^d \times \mathbb{R}^d$ is a global attractor.
	\end{Lemma}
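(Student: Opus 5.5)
We apply the Banach contraction principle to the operator $T$ of \eqref{eq:HM_operator} on the complete metric space $L_L^{(0)}$ with the $C^0$ distance. Write $\chi:=\Lambda_1\phi_2-M\phi_1$, so that
\[
T(g)(x)=x+C+\phi_1(x)+M\bigl(x-g^{-1}(x)\bigr)+\chi(g^{-1}(x)).
\]
The proof has three steps: show $T$ preserves $L_L^{(0)}$, show $T$ is a strict contraction there, and deduce the invariant graph and its attraction property.

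\emph{Step 1: invariance of $L_L^{(0)}$.} For $g\in L_L^{(0)}$, write $g=\mathrm{Id}+u$ with $\|u\|_{\mathrm{Lip}}\le L<1$. Then $g$ is a bi-Lipschitz homeomorphism of $\mathbb R^d$, by the standard perturbation-of-identity argument (a contraction mapping argument for $x\mapsto y-u(x)$). We have $g^{-1}=\mathrm{Id}+v$ with $v$ periodic and $\|v\|_{\mathrm{Lip}}\le L/(1-L)$, and $\|g^{-1}\|_{\mathrm{Lip}}\le 1/(1-L)$. Hence
\[
\|T(g)-\mathrm{Id}\|_{\mathrm{Lip}}\le\|\phi_1\|_{\mathrm{Lip}}+\|M\|\frac{L}{1-L}+\frac{\|\chi\|_{\mathrm{Lip}}}{1-L}.
\]
The hypothesis $(\phi_1,\phi_2)\in B^{(0)}_\phi$ bounds the first and last terms together by $L(1-L-\|M\|)/(1-L)$. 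Adding $\|M\|L/(1-L)$ gives exactly $L$, so $T(g)\in L_L^{(0)}$. Periodicity of $T(g)-\mathrm{Id}$ follows from periodicity of $\phi_1$, $\chi$ and $v$.

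\emph{Step 2: contraction.} For $g,h\in L_L^{(0)}$,
\[
T(g)-T(h)=-M\bigl(g^{-1}-h^{-1}\bigr)+\chi\circ g^{-1}-\chi\circ h^{-1},
\]
so $\|T(g)-T(h)\|_{C^0}\le(\|M\|+\|\chi\|_{\mathrm{Lip}})\|g^{-1}-h^{-1}\|_{C^0}$. To compare the inverses, set $y=g^{-1}(x)$ and $z=h^{-1}(x)$. Then $h(z)=g(y)$, and
\[
|y-z|\le\frac{|h(y)-h(z)|}{1-L}=\frac{|h(y)-g(y)|}{1-L},
\]
using $|h(y)-h(z)|\ge(1-L)|y-z|$. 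This gives $\|g^{-1}-h^{-1}\|_{C^0}\le\|g-h\|_{C^0}/(1-L)$, which is the stated Lipschitz constant of $T$.

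The main obstacle is checking that this constant is strictly less than $1$ using only membership in $B^{(0)}_\phi$. From $B^{(0)}_\phi$ we get $\|\chi\|_{\mathrm{Lip}}\le L(1-L-\|M\|)$. It therefore suffices that
\[
\|M\|+L(1-L-\|M\|)<1-L,\quad\text{i.e.}\quad (1-L-\|M\|)(1-L)>0,
\]
which holds because $L<1-\|M\|$. Banach's theorem then gives a unique fixed point $g$, and the iterates from $\mathrm{Id}$ converge to it uniformly.

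\emph{Step 3: the graph and global attraction.} By Proposition~\ref{pro:Herman}, the fixed point $g$ yields $\psi$ via \eqref{eq:psi_explicit_formula}. This $\psi$ is Lipschitz and periodic, and $\Gamma$ is an invariant graph.

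For uniqueness among Lipschitz invariant graphs, the induced base map of any other such graph is a fixed point of $T$, but need not a priori lie in $L_L^{(0)}$. We plan to obtain uniqueness from attraction instead: two distinct compact invariant graphs cannot both be global attractors, since each would have to contain the other.

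For attraction, take an orbit $(x_n,y_n)$ and let $e_n:=\Lambda_1(y_n-\psi(x_n))$. Using $w=\Lambda_1 y$ and invariance,
\[
e_{n+1}=Me_n+\chi(x_n)-\chi(\tilde x_n)+\Lambda_1\bigl(\psi(\tilde x_{n+1})-\psi(x_{n+1})\bigr),
\]
where $\tilde x_n$ denotes the base point on $\Gamma$ corresponding to $x_n$. We expect this to close into an estimate of the form $|e_{n+1}|\le\kappa|e_n|$ with $\kappa<1$ determined by the same constant as in Step 2.

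If the direct estimate proves messy, the fallback is a graph-transform comparison. Consider the strip of Lipschitz graphs of Lipschitz constant compatible with $L$. Apply $F$ to the graphs through a bounded set and use the contraction factor above to show their fiber distance to $\Gamma$ decays geometrically. This gives exponential convergence, uniform on bounded sets, hence attraction of bounded sets. Maximality then follows from Remark~\ref{rem:attractor_maximality}.
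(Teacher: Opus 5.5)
Your Steps~1 and~2 are exactly the paper's argument: the same Lipschitz bound on $T(g)-\mathrm{Id}$, the same estimate $\|g^{-1}-h^{-1}\|_{C^0}\le\|g-h\|_{C^0}/(1-L)$, and a contraction factor at most $\|M\|+L<1$. The gap is in Step~3. Global attraction, uniform on bounded sets, and uniqueness of the graph are part of the statement, but you only say the error recurrence should ``close'', and the graph-transform fallback is not an argument. Moreover, the recurrence you wrote is not right. With $e_n=\Lambda_1(y_n-\psi(x_n))$, both $y_{n+1}$ and $\psi(g(x_n))$ contain the same term $\phi_2(x_n)$ (by \eqref{eq:prop_2}), so no $\chi$-differences occur. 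Comparing first components with \eqref{eq:prop_1} gives a second identity:
\[
e_{n+1}=Me_n-\Lambda_1\bigl(\psi(x_{n+1})-\psi(g(x_n))\bigr),\qquad x_{n+1}-g(x_n)=e_n .
\]
Thus $|e_{n+1}|\le(\|M\|+\|\Lambda_1\psi\|_{\mathrm{Lip}})|e_n|$. The missing idea is a Lipschitz bound on $\Lambda_1\psi$ that makes this factor smaller than one. The bound $L+\|\phi_1\|_{\mathrm{Lip}}$ read off from \eqref{eq:psi_explicit_formula} does not suffice: with $\chi\equiv0$, $\|M\|=0.1$ and $L=0.8$, the hypothesis allows $\|\phi_1\|_{\mathrm{Lip}}=0.4$, and then $\|M\|+L+\|\phi_1\|_{\mathrm{Lip}}>1$.

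The paper instead reads $\Lambda_1\psi$ off the fixed-point equation, $\Lambda_1\psi=M(\mathrm{Id}-g^{-1})+C-\alpha_1+\chi\circ g^{-1}$. This gives $\|\Lambda_1\psi\|_{\mathrm{Lip}}\le(\|M\|L+\|\chi\|_{\mathrm{Lip}})/(1-L)$, hence $\|M\|+\|\Lambda_1\psi\|_{\mathrm{Lip}}\le(\|M\|+\|\chi\|_{\mathrm{Lip}})/(1-L)\le\|M\|+L$, which is the Step~2 constant you anticipated. A route closer to your $\chi$-terms also works. For $\tilde x_n:=g^{-1}(x_{n+1})$ one computes exactly $e_{n+1}=\chi(x_n)-\chi(\tilde x_n)-M(x_n-\tilde x_n)$, while $|x_n-\tilde x_n|\le|e_n|/(1-L)$; this gives the same factor with no bound on $\psi$ at all. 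Since $|e_0|\le\|\Lambda_1\|(R+\|\psi\|_{C^0})$ for initial points with $|y_0|\le R$, you get exponential attraction uniform on bounded sets.

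Finally, your uniqueness reasoning is misdirected. A second invariant graph $\bar\Gamma'$ is not known to be an attractor, so ``two graphs cannot both be global attractors'' proves nothing. It suffices that $\bar\Gamma$ is the global attractor: by Remark~\ref{rem:attractor_maximality} it contains the compact invariant set $\bar\Gamma'$, and a graph over all of $\mathbb T^d$ contained in another such graph coincides with it. This yields uniqueness among all continuous invariant graphs. It also disposes of your worry that the base map of $\bar\Gamma'$ need not lie in $L_L^{(0)}$.
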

	
	\begin{proof}
		For any $g \in L_L^{(0)}$, the condition $\|g - {\mathrm{Id}}\|_{\mathrm{Lip}} \le L < 1$ implies that $g$ is a bi-Lipschitz homeomorphism satisfying $\|g^{-1}\|_{\mathrm{Lip}} \le (1-L)^{-1}$ and $\|{\mathrm{Id}} - g^{-1}\|_{\mathrm{Lip}} \le L(1-L)^{-1}$. Since $L_L^{(0)}$ is a closed subset of the complete metric space $(\mathrm{Id} + C^0(\mathbb{T}^d, \mathbb{R}^d), d_{C^0})$, the space $(L_L^{(0)}, d_{C^0})$ is complete.
		
		\medskip \noindent\textit{Step 1: $T(L_L^{(0)}) \subset L_L^{(0)}$.}\par
		Take any $g \in L_L^{(0)}$. Rewriting $T(g)(x) - x$, we have:
		\[
		T(g)(x) - x = M(x - g^{-1}(x)) + C + \phi_1(x) + (\Lambda_1 \phi_2 - M \phi_1)(g^{-1}(x)).
		\]
		Since $\phi_1, \phi_2$ are $\mathbb{Z}^d$-periodic and ${g\in L_L^{(0)}}$, for any $k \in \mathbb{Z}^d$ we have $g^{-1}(x+k) = g^{-1}(x) + k$, which yields $T(g)(x+k) - (x+k) = T(g)(x) - x$. Thus $T(g) - \mathrm{Id}$ is $\mathbb{Z}^d$-periodic. Furthermore, for any $x, y \in \mathbb{R}^d$,
		\begin{align*}
			|(T(g)(x) - x) - (T(g)(y) - y)|
			&\le \|M\| \cdot |(x - g^{-1}(x)) - (y - g^{-1}(y))| + |\phi_1(x) - \phi_1(y)| \\
			&\quad + |(\Lambda_1 \phi_2 - M \phi_1)(g^{-1}(x)) - (\Lambda_1 \phi_2 - M \phi_1)(g^{-1}(y))| \\
			&\le \left( \|M\| \frac{L}{1-L} + \|\phi_1\|_{\mathrm{Lip}} + \frac{1}{1-L} \|\Lambda_1 \phi_2 - M \phi_1\|_{\mathrm{Lip}} \right) |x - y|.
		\end{align*}
		By the definition of $B_\phi^{(0)}$, the parenthesized factor is bounded by
		\[
		\|M\| \frac{L}{1-L} + \frac{L(1 - L - \|M\|)}{1-L} = \frac{L(1-L)}{1-L} = L.
		\]
		Hence $\|T(g) - {\mathrm{Id}}\|_{\mathrm{Lip}} \le L$, confirming that $T(L_L^{(0)}) \subset L_L^{(0)}$.
		
		\medskip \noindent\textit{Step 2: Strict contractivity of $T$.}\par
		For any $f, g \in L_L^{(0)}$,
		\[
		T(f)(x) - T(g)(x) = -M(f^{-1}(x) - g^{-1}(x)) + \Big[ (\Lambda_1 \phi_2 - M \phi_1)(f^{-1}(x)) - (\Lambda_1 \phi_2 - M \phi_1)(g^{-1}(x)) \Big].
		\]
		Taking the Euclidean norm yields
		\[
		|T(f)(x) - T(g)(x)| \le \Big( \|M\| + \|\Lambda_1 \phi_2 - M \phi_1\|_{\mathrm{Lip}} \Big) |f^{-1}(x) - g^{-1}(x)|.
		\]
		Setting $u = f^{-1}(x)$, we have $x = f(u) = g(g^{-1}(x))$, and
		\[
		|f^{-1}(x) - g^{-1}(x)| = |g^{-1}(g(u)) - g^{-1}(f(u))| \le \|g^{-1}\|_{\mathrm{Lip}} |g(u) - f(u)| \le \frac{1}{1-L} \|f - g\|_{C^0}.
		\]
		Taking the supremum over $x \in \mathbb{R}^d$ gives
		\[
		d_{C^0}(T(f), T(g)) \le \left( \frac{\|M\| + \|\Lambda_1 \phi_2 - M \phi_1\|_{\mathrm{Lip}}}{1-L} \right) d_{C^0}(f, g).
		\]
		Using the condition defining $B_\phi^{(0)}$,
		\[
		\frac{\|M\| + \|\Lambda_1 \phi_2 - M \phi_1\|_{\mathrm{Lip}}}{1-L} \le \frac{\|M\| + L(1 - L - \|M\|)}{1-L} = \|M\| + L < 1.
		\]
		Thus $T$ is a strict contraction on $(L_L^{(0)}, d_{C^0})$. By the Banach fixed-point theorem, there exists a unique fixed point $g \in L_L^{(0)}$.
		
		\medskip \noindent\textit{Step 3: Global attraction and uniqueness of the invariant graph.}\par
		Let $\Gamma = \{(x, \psi(x)) \mid x \in \mathbb{R}^d\}$ be the invariant graph associated with $g$, with $\psi$ defined by \eqref{eq:psi_explicit_formula}, and let $\bar{\Gamma} = \{(\theta, \bar{\psi}(\theta)) \mid \theta \in \mathbb{T}^d\}$ be its projection on $\mathbb{T}^d \times \mathbb{R}^d$.
		
		Let $\bar{B} \subset \mathbb{T}^d \times \mathbb{R}^d$ be any bounded set. Then there exists $R > 0$ such that all initial points $(x^0, y^0) \in \mathbb{R}^{2d}$ projecting into $\bar{B}$ satisfy $|y^0| \le R$. The initial vertical error $e^0 := y^0 - \psi(x^0)$ satisfies $|e^0| \le R + \|\psi\|_{C^0} =: C_B < \infty$.
		
		Let $(x^n, y^n) = (F_{\Lambda,\alpha}^\phi)^n(x^0, y^0)$ and $e^n := y^n - \psi(x^n)$. Using the invariance relation \[\psi(g(x^{n-1})) = \alpha_2 + \Lambda_2 \psi(x^{n-1}) + \phi_2(x^{n-1}),\quad y^n = \alpha_2 + \Lambda_2 y^{n-1} + \phi_2(x^{n-1}),\] we obtain
		\[
		e^n = \Lambda_2 e^{n-1} - \Big[ \psi(x^n) - \psi(g(x^{n-1})) \Big].
		\]
		Introducing the transformed error variable $\tilde{e}^n := \Lambda_1 e^n$ and multiplying by $\Lambda_1$ gives
		\begin{equation}\label{eq:tilde_e_recurrence_clean}
			\tilde{e}^n = M \tilde{e}^{n-1} - \Lambda_1 \Big[ \psi(x^n) - \psi(g(x^{n-1})) \Big].
		\end{equation}
		Notice that
		\begin{align*}
		x^n - g(x^{n-1}) &= \big(x^{n-1} + \alpha_1 + \Lambda_1 y^{n-1} + \phi_1(x^{n-1})\big) - \big(x^{n-1} + \alpha_1 + \Lambda_1 \psi(x^{n-1}) + \phi_1(x^{n-1})\big)\\
& = \Lambda_1 e^{n-1} = \tilde{e}^{n-1}.
		\end{align*}
		Furthermore, because $g$ is the fixed point of $T$, Proposition~\ref{pro:Herman} implies
		\[
		\Lambda_1 \psi(x) = M\big(x - {g^{-1}(x)}\big) + C - \alpha_1 + (\Lambda_1 \phi_2 - M \phi_1)({g^{-1}(x)}).
		\]
		Using $\|{\mathrm{Id} - g^{-1}}\|_{\mathrm{Lip}} \le \frac{L}{1-L}$ and $\|{g^{-1}}\|_{\mathrm{Lip}} \le \frac{1}{1-L}$, the Lipschitz constant of $\Lambda_1 \psi$ is bounded by
		\begin{equation}\label{eq:Lambda1_psi_lip_bound}
			\|\Lambda_1 \psi\|_{\mathrm{Lip}} \le \|M\| \frac{L}{1-L} + \frac{\|\Lambda_1 \phi_2 - M \phi_1\|_{\mathrm{Lip}}}{1-L} = \frac{\|M\|L + \|\Lambda_1 \phi_2 - M \phi_1\|_{\mathrm{Lip}}}{1-L}.
		\end{equation}
		Substituting \eqref{eq:Lambda1_psi_lip_bound} into \eqref{eq:tilde_e_recurrence_clean} yields
		\begin{align*}
			|\tilde{e}^n| &\le \|M\| |\tilde{e}^{n-1}| + \|\Lambda_1 \psi\|_{\mathrm{Lip}} |x^n - g(x^{n-1})| \\
			&= \big( \|M\| + \|\Lambda_1 \psi\|_{\mathrm{Lip}} \big) |\tilde{e}^{n-1}| \\
			&\le \frac{\|M\| + \|\Lambda_1 \phi_2 - M \phi_1\|_{\mathrm{Lip}}}{1-L} |\tilde{e}^{n-1}| =: \tilde{\kappa} |\tilde{e}^{n-1}|.
		\end{align*}
		By the condition defining $B_\phi^{(0)}$, $\tilde{\kappa} \le \|M\| + L < 1$. Iterating this bound gives
		\[
		|e^n| \le \|\Lambda_1^{-1}\| |\tilde{e}^n| \le \|\Lambda_1^{-1}\| \|\Lambda_1\| C_B \tilde{\kappa}^n \xrightarrow{n \to \infty} 0.
		\]
		On the cylinder $\mathbb{T}^d \times \mathbb{R}^d$, the distance satisfies $\bar{d}\big((\theta^n, y^n), \bar{\Gamma}\big) \le |y^n - \psi(x^n)| = |e^n| \to 0$ uniformly on $\bar{B}$. {By} Definition~\ref{def:Global Attractor}, $\bar{\Gamma}$ is a global attractor. By Remark~\ref{rem:attractor_maximality}, $\bar{\Gamma}$ contains every compact invariant set on the cylinder. In particular, if $\bar{\Gamma}'$ is any continuous invariant graph, its compactness and invariance imply $\bar{\Gamma}' \subset \bar{\Gamma}$; since both $\bar{\Gamma}'$ and $\bar{\Gamma}$ are graphs over the entire base torus $\mathbb{T}^d$, we necessarily have $\bar{\Gamma}' = \bar{\Gamma}$, establishing the uniqueness of the continuous invariant graph.
	\end{proof}
		
\begin{Remark}
The attraction and global uniqueness above use
\[
\widetilde\kappa=
\frac{\|M\|+\|\Lambda_1\phi_2-M\phi_1\|_{\mathrm{Lip}}}{1-L}
\le\|M\|+L<1.
\]
The hypotheses of Theorem~\ref{thm:DSM_Cr_smoothness}, including its \(C^1\) endpoint, imply \(L<1-\|M\|\). Thus this argument also gives uniqueness among all continuous invariant graphs in that theorem.
\end{Remark}

	{The following convergence lemmas will be applied to the derivative iterates.}
	
	\begin{Lemma}\label{lem:convergence_of_derivatives}
	Let {$\mathcal{X}$ be a nonempty closed subset of a Banach space, with the induced norm distance}, $(\mathcal{Y}, d)$ be a metric space, and $g \in \mathcal{Y}$. Suppose the operator $\mathcal{F}: \mathcal{X} \times \mathcal{Y} \to \mathcal{X}$ satisfies:
	\begin{itemize}
		\item[(i)] There exists $\rho \in (0, 1)$ such that for all $u, w \in \mathcal{X}$ and {$h \in \mathcal{Y}$},
		\[
		{\|\mathcal{F}(u, h) - \mathcal{F}(w, h)\|_{\mathcal{X}} \le \rho \|u - w\|_{\mathcal{X}}.}
		\]
		\item[(ii)] For every fixed $v \in \mathcal{X}$, the mapping {$h\mapsto\mathcal F(v,h)$} is continuous at $g$.
	\end{itemize}
	Then $\mathcal{F}(\cdot, g)$ admits a unique fixed point $v \in \mathcal{X}$. Furthermore, if a sequence $\{g_n\} \subset \mathcal{Y}$ converges to $g$, then for any initial state $v_0 \in \mathcal{X}$, the sequence $v_{n+1} = \mathcal{F}(v_n, g_n)$ converges to $v$ in $\mathcal{X}$.
\end{Lemma}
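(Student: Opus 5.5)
The plan is to take existence and uniqueness of the fixed point from the Banach contraction principle, and then control the convergence of the perturbed iteration by a standard telescoping estimate.

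\emph{Step 1 (fixed point).} By (i) applied with $h=g$, the map $\mathcal F(\cdot,g)$ sends the closed subset $\mathcal X$ of a Banach space into itself and is a $\rho$-contraction. Since $\mathcal X$ is closed in a complete space, it is a complete metric space. The Banach fixed-point theorem then gives a unique fixed point $v\in\mathcal X$ with $v=\mathcal F(v,g)$.

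\emph{Step 2 (error recursion).} Set $a_n:=\|v_n-v\|_{\mathcal X}$. We insert and subtract $\mathcal F(v,g_n)$:
\[
v_{n+1}-v=\bigl(\mathcal F(v_n,g_n)-\mathcal F(v,g_n)\bigr)+\bigl(\mathcal F(v,g_n)-\mathcal F(v,g)\bigr).
\]
The first difference is bounded by $\rho a_n$, by (i) applied with $h=g_n$; here the uniformity of $\rho$ in $h$ is essential. For the second, define $\varepsilon_n:=\|\mathcal F(v,g_n)-\mathcal F(v,g)\|_{\mathcal X}$. Since $g_n\to g$ in $\mathcal Y$, hypothesis (ii) applied to the fixed element $v$ gives $\varepsilon_n\to0$. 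Together these yield
\[
a_{n+1}\le\rho a_n+\varepsilon_n .
\]

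\emph{Step 3 (convergence of the recursion).} Iterating gives
\[
a_n\le\rho^n a_0+\sum_{k=0}^{n-1}\rho^{n-1-k}\varepsilon_k .
\]
The term $\rho^n a_0$ tends to zero. For the sum, fix $\eta>0$ and choose $N$ with $\varepsilon_k<\eta$ for $k\ge N$. The tail $k\ge N$ contributes at most $\eta/(1-\rho)$. The finitely many terms with $k<N$ are bounded by $\rho^{\,n-N}\sum_{k<N}\varepsilon_k$, which tends to zero. Hence $\limsup_n a_n\le\eta/(1-\rho)$. Since $\eta$ is arbitrary, $a_n\to0$, i.e.\ $v_n\to v$ in $\mathcal X$.

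The only delicate point is this Cauchy-product/Toeplitz argument in Step 3, together with making sure that (ii) is used only at the fixed point $v$ and not uniformly in the first argument. Arranging the splitting in Step 2 so that the $h$-dependence is evaluated at $v$ avoids needing any uniformity.
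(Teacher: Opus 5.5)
Your proposal is correct and follows the paper's argument: the Banach fixed-point theorem for $\mathcal F(\cdot,g)$, then the same insertion of $\mathcal F(v,g_n)$ to obtain $a_{n+1}\le\rho a_n+\varepsilon_n$, with $\varepsilon_n\to0$ by (ii). Your Step~3 writes out the elementary convergence argument for this recursion, which the paper states only as ``iterating this recurrence yields'' the conclusion.
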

	
	\begin{proof}
	The existence and uniqueness of $v$ follow from the Banach fixed-point theorem applied to $\mathcal{F}(\cdot, g)$. By the triangle inequality and condition (i),
	\[
	\|v_{n+1} - v\|_{\mathcal{X}} \le \|\mathcal{F}(v_n, g_n) - \mathcal{F}(v, g_n)\|_{\mathcal{X}} + \|\mathcal{F}(v, g_n) - \mathcal{F}(v, g)\|_{\mathcal{X}} \le \rho \|v_n - v\|_{\mathcal{X}} + \delta_n,
	\]
	where $\delta_n := \|\mathcal{F}(v, g_n) - \mathcal{F}(v, g)\|_{\mathcal{X}} \to 0$ as $n \to \infty$ by condition (ii). Iterating this recurrence yields $\|v_n - v\|_{\mathcal{X}} \to 0$ as $n \to \infty$.
	\end{proof}
	
	{
\begin{Lemma}[Strict inverse contraction]\label{lem:strict_inverse_endpoint}
Let $0<L<1$, set $s=1-L$, and equip real matrices with the Euclidean
operator norm.  On the compact set
\[
\mathcal K_L:=\{A\in\operatorname{Mat}_d(\mathbb R):\|A-I_d\|\le L\},
\]
every matrix is invertible, and for distinct $A,B\in\mathcal K_L$,
\begin{equation}\label{eq:strict_inverse_endpoint}
s^2\|A^{-1}-B^{-1}\|<\|A-B\|.
\end{equation}
Moreover, there is a nondecreasing, upper semicontinuous function
$\omega:[0,2L]\to[0,2L]$ such that $\omega(0)=0$,
$\omega(t)<t$ for $t>0$, and
\begin{equation}\label{eq:inverse_uniform_modulus}
s^2\|A^{-1}-B^{-1}\|\le\omega(\|A-B\|)
\quad(A,B\in\mathcal K_L).
\end{equation}
\end{Lemma}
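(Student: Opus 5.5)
Invertibility on $\mathcal K_L$ comes from a Neumann series: writing $A=I_d-E$ with $\|E\|\le L<1$ gives $\|A^{-1}\|\le 1/(1-L)=1/s$. For the strict inequality we use the resolvent identity $A^{-1}-B^{-1}=A^{-1}(B-A)B^{-1}$. This yields the non-strict bound
\[
\|A^{-1}-B^{-1}\|\le\|A^{-1}\|\,\|A-B\|\,\|B^{-1}\|\le s^{-2}\|A-B\|.
\]
So the task is to rule out equality when $A\ne B$.

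Suppose equality holds for distinct $A,B$. Then every factor in the chain must be extremal. In particular $\|A^{-1}\|=\|B^{-1}\|=1/s$, and there is a unit vector $u$ with the following properties:
\begin{itemize}
\item $v:=B^{-1}u$ satisfies $|v|=1/s$;
\item $w:=(B-A)v$ satisfies $|w|=\|A-B\|/s$;
\item $|A^{-1}w|=|w|/s$.
\end{itemize}
Such a $u$ exists by compactness of the unit sphere.

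The key sub-claim is the equality case of the Neumann bound. If $\|A-I_d\|\le L$ and $|A^{-1}z|=|z|/s$ for some $z\ne0$, set $x=A^{-1}z$. Then $|z|=|Ax|\ge|x|-|(A-I_d)x|\ge(1-L)|x|=s|x|=|z|$, so both inequalities are equalities. Equality in the reverse triangle inequality in Euclidean space forces $(A-I_d)x=-Lx$. Hence $Ax=sx$, i.e.\ $x$ is an eigenvector with eigenvalue $s$ and $z=sx$ is parallel to $x$.

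Applying this to $B$ with $z=u$ gives $Bv=sv$, so $u=sv$. Applying it to $A$ with $z=w$ gives $A^{-1}w=w/s$, i.e.\ $Aw=sw$. Now compute $w=(B-A)v=sv-Av$. Since $\|A-I_d\|\le L$ and $(A-I_d)w=-Lw$, the vector $w$ is a maximal singular direction of $A-I_d$ with the sign fixed. For $K=A-I_d$ with $\|K\|\le L$ and $Kw=-Lw$, the identity $|K^Tw|^2=\langle w,KK^Tw\rangle$ and $\|K\|\le L$ give $K^Tw=-Lw$ as well. Hence
\[
\langle w,Av\rangle=\langle A^Tw,v\rangle=s\langle w,v\rangle,\qquad \langle w,w\rangle=\langle w,sv-Av\rangle=s\langle w,v\rangle-s\langle w,v\rangle=0.
\]
So $w=0$, which contradicts $|w|=\|A-B\|/s>0$. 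This is the step I expect to be the crux. The transpose trick, which upgrades "$Kw=-Lw$ with $\|K\|\le L$" to "$K^Tw=-Lw$", is what makes it work, and the argument should be checked carefully.

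For the modulus $\omega$, note that $\|A-B\|\le 2L$ on $\mathcal K_L$, and define
\[
\omega(t):=\max\{s^2\|A^{-1}-B^{-1}\|: A,B\in\mathcal K_L,\ \|A-B\|\le t\}.
\]
The maximum is attained because the constraint set is compact and inversion is continuous. By definition $\omega$ is nondecreasing and $\omega(0)=0$. Inequality \eqref{eq:inverse_uniform_modulus} also holds by definition: $s^2\|A^{-1}-B^{-1}\|\le\omega(\|A-B\|)$. The bound $\omega\le 2L$ follows from $\omega(t)\le t\le 2L$, which uses the non-strict estimate.

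For $t>0$, the maximizer $(A,B)$ either has $A=B$, giving value $0<t$, or $A\ne B$. In the latter case the strict inequality \eqref{eq:strict_inverse_endpoint} gives value $<\|A-B\|\le t$, so $\omega(t)<t$.

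Upper semicontinuity follows from a standard compactness argument. Take $t_n\to t$ and maximizers $(A_n,B_n)$ with $\|A_n-B_n\|\le t_n$. Along a subsequence realizing $\limsup\omega(t_n)$, these converge to some $(A,B)\in\mathcal K_L^2$ with $\|A-B\|\le t$. Continuity of inversion then gives $\limsup\omega(t_n)\le\omega(t)$.
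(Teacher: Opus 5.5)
Your proposal is correct and follows the paper's proof step by step: the resolvent identity gives the non-strict bound; the equality analysis forces $Bv=sv$ and $Aw=sw$; the transpose upgrade and the inner-product computation then give $w=0$; and $\omega$ is defined as a maximum over the compact set $\{\|A-B\|\le t\}$, with upper semicontinuity by compactness. The one imprecision is in the transpose step, since the identity $|K^Tw|^2=\langle w,KK^Tw\rangle$ does not use $Kw=-Lw$ and so cannot yield the conclusion; instead use $\langle K^Tw,w\rangle=\langle w,Kw\rangle=-L|w|^2$ together with $\|K^T\|=\|K\|\le L$, which gives $|K^Tw+Lw|^2=|K^Tw|^2-L^2|w|^2\le0$ and is exactly the paper's computation with $R=-K/L$.
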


\begin{proof}
For $A\in\mathcal K_L$ and $v\in\mathbb R^d$, the triangle inequality gives
\[
|Av|=|v+(A-I_d)v|\ge |v|-|(A-I_d)v|
\ge(1-L)|v|=s|v|.
\]
Since $s>0$, the kernel of $A$ is trivial, so $A$ is invertible in
finite dimension. Applying this estimate to $v=A^{-1}u$ gives
$\|A^{-1}\|\le s^{-1}$. For any $A,B\in\mathcal K_L$, we recall the inverse identity
\[
A^{-1}-B^{-1}=A^{-1}(B-A)B^{-1}.
\]
Sub-multiplicativity of the operator norm yields
\[
\|A^{-1}-B^{-1}\|
\le\|A^{-1}\|\,\|A-B\|\,\|B^{-1}\|
\le s^{-2}\|A-B\|,
\]
which is the non-strict version of \eqref{eq:strict_inverse_endpoint}.
It also proves that matrix inversion is continuous on $\mathcal K_L$.

We now exclude equality for distinct matrices.
Suppose equality holds for $A\ne B$, and put $\delta=\|A-B\|>0$.
The operator norm is attained on the compact unit sphere, so there
is a unit vector $\xi$ with
\[
|A^{-1}(A-B)B^{-1}\xi|=s^{-2}\delta.
\]
Set $y=B^{-1}\xi$ and $z=(A-B)y$. Equality must hold at every step in
\[
s^{-2}\delta=|A^{-1}z|
\le s^{-1}|z|\le s^{-1}\delta|y|
\le s^{-2}\delta.
\]
In particular, $z\ne0$, $|y|=s^{-1}$, $|By|=1=s|y|$, and
$|A(A^{-1}z)|=s|A^{-1}z|$.
For any $B\in\mathcal K_L$ and nonzero $w$, equality in
\[
|Bw|=|w+(B-I_d)w|\ge |w|-|(B-I_d)w|\ge s|w|
\]
requires $|(B-I_d)w|=L|w|$ and equality in the reverse triangle
inequality. Since $L<1$, this means $(B-I_d)w=-Lw$, hence $Bw=sw$.
Applying this observation first to $B,y$ gives $By=sy$, and then
to $A,A^{-1}z$ gives $z=sA^{-1}z$. Multiplying the latter identity
by $A$ gives $Az=sz$.

The norm bound on $A-I_d$ now also forces $A^Tz=sz$.
Indeed, let $R=(I_d-A)/L$. Then $\|R\|\le1$ and $Rz=z$.
Using $\|R^T\|=\|R\|$ for the Euclidean operator norm, we compute
\[
|R^Tz-z|^2
=|R^Tz|^2+|z|^2-2\langle R^Tz,z\rangle
=|R^Tz|^2-|z|^2\le0.
\]
Thus $R^Tz=z$, and $A^Tz=sz$. Consequently,
\[
|z|^2=\langle(A-B)y,z\rangle
=\langle(A-sI_d)y,z\rangle
=\langle y,(A^T-sI_d)z\rangle=0,
\]
contradicting $z\ne0$. This proves \eqref{eq:strict_inverse_endpoint}.

For $0\le t\le2L$, define
\[
\mathcal P_t:=\{(A,B)\in\mathcal K_L\times\mathcal K_L:
\|A-B\|\le t\},\qquad
\omega(t):=\max_{(A,B)\in\mathcal P_t}
s^2\|A^{-1}-B^{-1}\|.
\]
The set $\mathcal K_L$ is compact because it is bounded and closed
in a finite-dimensional space. The set $\mathcal P_t$ is also
compact, since its additional constraint is preserved under limits,
and it is nonempty because it contains $(I_d,I_d)$.
The objective is continuous by the inverse estimate above.
Hence the maximum is attained at a pair $(A_t,B_t)$.
For $t>0$, if $A_t=B_t$, then $\omega(t)=0<t$; otherwise,
\[
\omega(t)=s^2\|A_t^{-1}-B_t^{-1}\|
<\|A_t-B_t\|\le t.
\]
Attainment of the maximum is what preserves strictness here.
Also, $\mathcal P_{t_1}\subset\mathcal P_{t_2}$ for $t_1\le t_2$,
so $\omega$ is nondecreasing. At $t=0$ only pairs with $A=B$ are
admissible, giving $\omega(0)=0$. In particular,
$0\le\omega(t)\le t\le2L$.

To prove upper semicontinuity, take any $t_j\to t$ in $[0,2L]$.
Since $\omega$ is bounded, choose a subsequence along which
$\omega(t_j)$ converges to $\limsup_j\omega(t_j)$, and choose a
maximizing pair $(A_j,B_j)\in\mathcal P_{t_j}$ for each of these
parameters. A further subsequence converges to a pair
$(A_*,B_*)\in\mathcal K_L\times\mathcal K_L$ by compactness.
Passing to the limit in $\|A_j-B_j\|\le t_j$ gives
$\|A_*-B_*\|\le t$, so this pair is admissible for $\mathcal P_t$.
Continuity of inversion then gives
\[
\limsup_{j\to\infty}\omega(t_j)
=s^2\|A_*^{-1}-B_*^{-1}\|\le\omega(t).
\]
This is precisely upper semicontinuity, including at $t=0$ and $t=2L$.
Finally, every pair $A,B\in\mathcal K_L$ belongs to
$\mathcal P_{\|A-B\|}$; taking its value in the defining maximum
proves \eqref{eq:inverse_uniform_modulus}.
\end{proof}

\begin{Lemma}[Convergence with a strict contraction modulus]
\label{lem:nonlinear_convergence_endpoint}
Let $(\mathcal X,d_{\mathcal X})$ be a nonempty complete metric space
of finite diameter at most $D$, and let $\omega:[0,D]\to[0,D]$
be nondecreasing and upper semicontinuous, with
$\omega(0)=0$ and $\omega(t)<t$ for $t>0$.
Let $(\mathcal Y,d_{\mathcal Y})$ be a metric space.
Suppose $\mathcal F:\mathcal X\times\mathcal Y\to\mathcal X$ satisfies
\[
d_{\mathcal X}(\mathcal F(u,h),\mathcal F(w,h))
\le\omega(d_{\mathcal X}(u,w))
\]
for all $u,w\in\mathcal X$ and $h\in\mathcal Y$.
Fix $g\in\mathcal Y$, and suppose
$h\mapsto\mathcal F(v,h)$ is continuous at $g$ for each fixed
$v\in\mathcal X$.
Then $\mathcal F(\cdot,g)$ has a unique fixed point $v$.

For every sequence $(g_n)_{n\ge0}\subset\mathcal Y$ with $g_n\to g$
and every initial value $v_0\in\mathcal X$, the sequence defined by
\[
v_{n+1}:=\mathcal F(v_n,g_n),\qquad n\ge0,
\]
converges to $v$. The initial value $v_0$ is arbitrary; all subsequent
terms $v_n$ are determined by this recurrence.

\end{Lemma}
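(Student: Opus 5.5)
The argument has two parts: first a Boyd--Wong type fixed-point step for $\mathcal F(\cdot,g)$, then a perturbed-iteration estimate that uses the scalar comparison function $\omega$.

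\emph{Existence and uniqueness.} Set $\mathcal G:=\mathcal F(\cdot,g)$. Uniqueness is immediate: if $v\ne v'$ are both fixed points, then $t:=d_{\mathcal X}(v,v')>0$ and
\[
t=d_{\mathcal X}(\mathcal G v,\mathcal G v')\le\omega(t)<t,
\]
a contradiction.

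For existence, take any $u_0$ and put $u_{n+1}=\mathcal G u_n$, $t_n:=d_{\mathcal X}(u_{n+1},u_n)$. Then $t_{n+1}\le\omega(t_n)\le t_n$, so $t_n$ decreases to some $t_*\ge0$. Upper semicontinuity gives $t_*\le\limsup_n\omega(t_n)\le\omega(t_*)$, which forces $t_*=0$.

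The Cauchy property is the main obstacle, since $\omega$ gives no geometric rate. I will follow the Boyd--Wong argument. Suppose the sequence is not Cauchy. Then there are $\varepsilon>0$ and indices $m_k>n_k\to\infty$ with $d(u_{m_k},u_{n_k})\ge\varepsilon$, where $m_k$ is chosen minimal. Minimality and $t_n\to0$ give $d(u_{m_k},u_{n_k})\to\varepsilon^+$. Next,
\[
d(u_{m_k},u_{n_k})\le t_{m_k}+t_{n_k}+\omega\bigl(d(u_{m_k},u_{n_k})\bigr).
\]
Passing to the limit and using upper semicontinuity at $\varepsilon$ yields $\varepsilon\le\omega(\varepsilon)<\varepsilon$, a contradiction. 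Monotonicity of $\omega$ is what lets me push the estimate through the $\omega$ of the limit. By completeness $u_n\to v$. Finally, $d(\mathcal G u_n,\mathcal G v)\le\omega(d(u_n,v))\le d(u_n,v)\to0$, so $\mathcal G v=v$.

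\emph{Convergence of the perturbed iteration.} Let $a_n:=d_{\mathcal X}(v_n,v)\in[0,D]$ and $\delta_n:=d_{\mathcal X}(\mathcal F(v,g_n),\mathcal F(v,g))$. The continuity hypothesis gives $\delta_n\to0$. By the triangle inequality,
\[
a_{n+1}\le d(\mathcal F(v_n,g_n),\mathcal F(v,g_n))+\delta_n\le\omega(a_n)+\delta_n,
\]
using $\mathcal F(v,g)=v$. It remains to show that this scalar recursion forces $a_n\to0$. Put $a:=\limsup a_n$ and suppose $a>0$.

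First, $a$ is approached along a subsequence. Choose $n_j$ with $a_{n_j+1}\to a$. Along a further subsequence, $a_{n_j}\to b\le a$, so
\[
a\le\limsup\omega(a_{n_j})\le\omega(b)\quad\text{(by upper semicontinuity)}\le\omega(a)\quad\text{(by monotonicity)}<a,
\]
a contradiction. Hence $a=0$, i.e.\ $v_n\to v$.

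I note that finite diameter $D$ only ensures that $a_n$ stays in the domain $[0,D]$ of $\omega$. The arbitrariness of $v_0$ plays no role beyond this.
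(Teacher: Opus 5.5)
Your proof is correct. The perturbed-iteration half is the paper's argument: the same recursion $a_{n+1}\le\omega(a_n)+\delta_n$, a subsequence realizing the $\limsup$, then upper semicontinuity followed by monotonicity.

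The difference is how you get the fixed point of $\mathcal F(\cdot,g)$. The paper does not use Boyd--Wong. It sets $d_n=\omega^{\circ n}(D)$ and shows $d_n\to0$ by the same upper-semicontinuity argument you apply to $t_n$. Monotonicity of $\omega$ then gives $d_{\mathcal X}(Q^nu,Q^nw)\le\omega^{\circ n}(d_{\mathcal X}(u,w))$, and therefore $d_{\mathcal X}(Q^{n+m}w,Q^nw)\le\omega^{\circ n}(D)=d_n$ uniformly in $m$. So the Cauchy property is immediate, with a rate independent of the starting point and of $m$. That route genuinely uses both the finite diameter and the monotonicity of $\omega$.

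Your Boyd--Wong argument uses neither for existence. It needs only upper semicontinuity at $\varepsilon$ and $\omega(t)<t$, so it would survive in an unbounded complete space with $\omega$ defined on $[0,\infty)$. The price is a longer, non-quantitative contradiction argument. Since $D$ is a hypothesis of this lemma anyway, the paper's route is the more economical one here.

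One small inaccuracy: in your Cauchy step it is upper semicontinuity at $\varepsilon$, together with $d(u_{m_k},u_{n_k})\to\varepsilon$, that lets you pass to $\omega(\varepsilon)$; monotonicity plays no role there. Monotonicity is genuinely needed only in the second half, where $a_{n_j}$ converges to some $b\le a$ rather than to $a$ itself.
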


\begin{proof}
If $D=0$, the space $\mathcal X$ consists of one point, and the
assertion is immediate. Otherwise set $d_n=\omega^{\circ n}(D)$,
where $\omega^{\circ n}$ denotes the $n$-fold composition of $\omega$
with itself and $\omega^{\circ0}$ is the identity. Equivalently,
\[
d_0=D,\qquad d_{n+1}=\omega(d_n),\qquad
d_1=\omega(D),\quad d_2=\omega(\omega(D)).
\]
Since $0\le\omega(t)\le t$, the sequence $d_n$ is nonincreasing
and has a limit $d\ge0$. If $d>0$, upper semicontinuity gives
\[
d=\lim_{n\to\infty}d_{n+1}
=\limsup_{n\to\infty}\omega(d_n)\le\omega(d)<d,
\]
a contradiction. Thus $d_n\to0$.

Write $Q=\mathcal F(\cdot,g)$. Repeated application of the
contraction estimate and monotonicity of $\omega$ gives, for all
$u,w\in\mathcal X$ and $n\ge0$,
\[
d_{\mathcal X}(Q^nu,Q^nw)
\le\omega^{\circ n}(d_{\mathcal X}(u,w)).
\]
Here $Q^n$ likewise denotes $n$ iterations of $Q$.
In particular, since the diameter is at most $D$, for any
$w\in\mathcal X$ and $m,n\ge0$,
\[
d_{\mathcal X}(Q^{n+m}w,Q^nw)
\le\omega^{\circ n}(d_{\mathcal X}(Q^mw,w))
\le\omega^{\circ n}(D)=d_n.
\]
The bound tends to zero independently of $m$, so $Q^nw$ is
Cauchy. Completeness gives a limit $v\in\mathcal X$.
Moreover, $d_{\mathcal X}(Qu,Qw)\le d_{\mathcal X}(u,w)$, so $Q$
is continuous. Passing to the limit in $Q(Q^nw)=Q^{n+1}w$ gives
$Qv=v$. If $\widetilde v$ were another fixed point at a positive
distance from $v$, then
\[
d_{\mathcal X}(v,\widetilde v)
\le\omega(d_{\mathcal X}(v,\widetilde v))
<d_{\mathcal X}(v,\widetilde v),
\]
which is impossible. Thus the fixed point is unique.

{Fix a sequence $g_n\to g$ and an arbitrary $v_0\in\mathcal X$,
and define $v_{n+1}=\mathcal F(v_n,g_n)$. Since $\mathcal F$ takes
values in $\mathcal X$, every iterate belongs to $\mathcal X$. Set}
\[
e_n=d_{\mathcal X}(v_n,v),\qquad
\varepsilon_n=d_{\mathcal X}(\mathcal F(v,g_n),\mathcal F(v,g)).
\]
The assumed parameter continuity gives $\varepsilon_n\to0$.
{Using the fixed-point identity $\mathcal F(v,g)=v$,
the defining recurrence for $v_n$, and the triangle inequality, we obtain}
\[
e_{n+1}\le
d_{\mathcal X}(\mathcal F(v_n,g_n),\mathcal F(v,g_n))
+\varepsilon_n
\le\omega(e_n)+\varepsilon_n.
\]
Set $E=\limsup_n e_n\in[0,D]$, and choose $n_j\to\infty$ with
$e_{n_j+1}\to E$. Since $0\le e_n\le D$, a further subsequence
satisfies $e_{n_j}\to E'$ for some $E'\le E$.
Upper semicontinuity followed by monotonicity yields
\[
E\le\limsup_{j\to\infty}\omega(e_{n_j})
\le\omega(E')\le\omega(E).
\]
If $E>0$, this contradicts $\omega(E)<E$. Hence $E=0$, and the
nonnegative sequence $e_n$ converges to zero, as required.
\end{proof}
}

\subsection{Regularity of the fixed point}\label{sec:fixed_point_regularity}

Lemma~\ref{lem:Lipschitz-I-G} gives a Lipschitz invariant graph.
We prove its $C^r$ regularity by showing uniform convergence of the
derivatives of the Herman--Mather iterates.

The $C^r$ section theorem in \cite[Theorem~3.5]{HPS77} proves
regularity of invariant sections for $r$-fiber contractions;
Definition~\ref{def:r-fiber contraction} records the trivial-bundle
version used in Section~\ref{sec:regularity}. In the bundle formulation, a $C^r$ map
$f:E\to E$ on a complete bundle over a compact $C^r$ manifold $X$
covers a $C^r$ diffeomorphism $h:X\to X$. The contraction conditions
are
\begin{equation}\label{kalpha}
\sup_x k_x<1,\qquad \sup_x k_x\kappa_x^r<1,
\end{equation}
where $k_x$ is the contraction constant on the fiber $E_x$ and
$\kappa_x=\|D_{h(x)}h^{-1}\|$. This section theorem also underlies
the persistence of compact $r$-normally hyperbolic invariant manifolds
in \cite[Theorem~4.1]{HPS77}. The local parameter version used in
Proposition~\ref{prop:irrational_Lip_not_C1} is stated and proved in
Lemma~\ref{lem:local_parameter_persistence}.

For the present model, differentiation of $g_{n+1}=T(g_n)$ gives
$Dg_{n+1}=F_1(Dg_n,g_n)$. At the $C^1$ endpoint,
Lemma~\ref{lem:strict_inverse_endpoint} gives a nonlinear contraction
modulus $\omega(t)<t$, and
Lemma~\ref{lem:nonlinear_convergence_endpoint} yields uniform
convergence of the derivatives. Higher integer derivatives are
handled inductively using Fa\`a di Bruno's formula and
Lemma~\ref{lem:convergence_of_derivatives}. The contraction factors
depend on the $C^1$ norms of the perturbations; higher derivatives
enter the remainders. For $r=k+\beta$, a uniform bound on
$[D^k g_n]_\beta$ gives the required H\"older regularity of the limit.

	{
We now complete the proof of Theorem~\ref{thm:DSM_Cr_smoothness}.
Throughout the proof, $L_L^{(r)}$ and $B_\phi^{(r)}$ are as defined
above.  {For this proof, abbreviate}
\[
q=\|M\|,\qquad s=1-L,\qquad
a=\|\Lambda_1\phi_2-M\phi_1\|_{C^1},\qquad
b=\|\phi_1\|_{C^1}.
\]
The first hypothesis is $b+a/s\le L(s-q)/s$.
For $r=1$ the second hypothesis is $q+a\le s^2$;
for $r>1$ it is $q+a<s^{r+1}$.
In every case $q+a\le s^2$ and
$L\le1-\sqrt q<1-q$, so
Lemma~\ref{lem:Lipschitz-I-G} applies.
}

		\medskip \noindent\textit{Step 1: $T(L_L^{(r)}) \subset L_L^{(r)}$.}\par
		For any $g \in L_L^{(r)}$, $\|Dg - I_d\|_{C^0} \le L < 1$. Thus $g$ is a $C^r$ diffeomorphism, and its inverse satisfies
		\[
		\|Dg^{-1}\|_{C^0} \le {s^{-1}}, \qquad \|I_d - Dg^{-1}\|_{C^0} \le {L/s}.
		\]
		Differentiating $T(g)(x) - x$ gives
		\[
		DT(g)(x) - I_d = M(I_d - Dg^{-1}(x)) + D\phi_1(x) + [\Lambda_1 D\phi_2(g^{-1}(x)) - M D\phi_1(g^{-1}(x))] Dg^{-1}(x).
		\]
		Taking the $C^0$-norm and using the first condition in $B_\phi^{(r)}$,
		\[
		\|DT(g) - I_d\|_{C^0} \le {qL/s+b+a/s} \le L.
		\]
		Hence $T(L_L^{(r)}) \subset L_L^{(r)}$ for all real $r \ge 1$.
		
		\medskip \noindent\textit{Step 2: Integer regularity $r = k \in \mathbb{Z}^+$.}\par
		We proceed by induction on $k$. {At each step we also prove that the displacements $g_n-\mathrm{Id}$ converge to $g-\mathrm{Id}$ in $C^k$, so that the convergence needed at the next step is part of the induction.}
		
		{
\smallskip\noindent\textbf{Base case $k=1$, including the endpoint.}
The iterates $g_{n+1}=T(g_n)$, $g_0=\mathrm{Id}$, converge in $C^0$
to the unique Lipschitz fixed point $g$.
Step~1 shows that $g_n\in L_L^{(1)}$ for every $n$.
Set $v_n=Dg_n$ and
\[
\mathcal X_L:=\{v\in C^0(\mathbb T^d,\operatorname{Mat}_d(\mathbb R)):
\|v-I_d\|_{C^0}\le L\}.
\]
This is a nonempty closed subset of a Banach space, hence a complete
metric space for the uniform norm, with diameter at most $2L$.
For $v\in\mathcal X_L$ and $h\in L_L^{(0)}$, define
\begin{align*}
F_1(v,h)(x):={}&I_d+M+D\phi_1(x)\\
&+\big[\Lambda_1D\phi_2(h^{-1}(x))
       -MD\phi_1(h^{-1}(x))-M\big]
       [v(h^{-1}(x))]^{-1}.
\end{align*}
{Differentiation gives $v_{n+1}=F_1(v_n,g_n)$,
with $v_0=Dg_0=I_d$. Thus these derivative fields form precisely
an iteration of the type considered in
Lemma~\ref{lem:nonlinear_convergence_endpoint}.}
For every $v\in\mathcal X_L$,
\[
\|F_1(v,h)-I_d\|_{C^0}
\le q\frac Ls+b+\frac as\le L,
\]
so $F_1$ maps $\mathcal X_L$ into itself.
The coefficient multiplying the inverse has norm at most
$q+a\le s^2$. For each $x$, put $\xi=h^{-1}(x)$ and apply
Lemma~\ref{lem:strict_inverse_endpoint} to $v(\xi),w(\xi)\in\mathcal K_L$:
\[
\begin{aligned}
\|F_1(v,h)(x)-F_1(w,h)(x)\|
&\le(q+a)\|v(\xi)^{-1}-w(\xi)^{-1}\|\\
&\le\omega(\|v(\xi)-w(\xi)\|)
\le\omega(\|v-w\|_{C^0}).
\end{aligned}
\]
The last inequality uses monotonicity of $\omega$.
Taking the supremum over $x$ gives
\[
\|F_1(v,h)-F_1(w,h)\|_{C^0}
\le\omega(\|v-w\|_{C^0})
\quad(v,w\in\mathcal X_L),
\]
where $\omega$ is independent of $h$.

For each fixed $v\in\mathcal X_L$, the map
$h\mapsto F_1(v,h)$ is continuous for the $C^0$ topology on
$L_L^{(0)}$.  Indeed,
\[
\|h_1^{-1}-h_2^{-1}\|_{C^0}
\le s^{-1}\|h_1-h_2\|_{C^0},
\]
and the continuous periodic coefficients and $v$ are uniformly
continuous; matrix inversion is uniformly continuous on
$\mathcal K_L$.
Lemma~\ref{lem:nonlinear_convergence_endpoint} now implies
$v_n\to v$ uniformly for a continuous matrix field $v$.
For any $x,z\in\mathbb R^d$ we can pass to the limit in
\[
g_n(x+z)-g_n(x)=\int_0^1Dg_n(x+tz)z\,dt
\]
and obtain the same identity with $g$ and $v$.
Subtracting $v(x)z$ from the limiting identity gives
\[
g(x+z)-g(x)-v(x)z
=\int_0^1\bigl(v(x+tz)-v(x)\bigr)z\,dt.
\]
Its norm is at most
$|z|\sup_{0\le t\le1}\|v(x+tz)-v(x)\|=o(|z|)$ as $z\to0$,
by continuity of $v$. Thus $g$ is differentiable with $Dg=v$,
and the continuity of this derivative gives $g\in C^1$.
Consequently $g\in L_L^{(1)}$, also when $q+a=s^2$.
}

		\smallskip \noindent\textbf{Induction step $k \to k+1$:}
		Assume the statement holds for $r = k$. For $r = k+1$, let $0 < L < 1 - {q^{1/(k+2)}}$. By the induction hypothesis, {$g_n-\mathrm{Id}\to g-\mathrm{Id}$ in $C^k$}. Setting $v_n := D^{k+1} g_n$ and differentiating $g_{n+1} = T(g_n)$ successively $k+1$ times via Fa\`{a} di Bruno's formula, the highest-order derivative satisfies
		\[
		v_{n+1}(x) = \mathcal{M}(g_n)(x) \cdot v_n(g_n^{-1}(x)) \cdot [Dg_n^{-1}(x)]^{\otimes(k+1)} + \mathcal{R}_k(g_n)(x),
		\]
		where $\mathcal{M}(g)(x) = [M + M D\phi_1(g^{-1}(x)) - \Lambda_1 D\phi_2(g^{-1}(x))] [Dg(g^{-1}(x))]^{-1}$ and $\mathcal{R}_k(g)$ depends only on the derivatives of $g$ up to order $k$.
		
		Define the fiber operator
		\[
		F_{k+1}(v, g)(x) := \mathcal{M}(g)(x) \cdot v(g^{-1}(x)) \cdot [Dg^{-1}(x)]^{\otimes(k+1)} + \mathcal{R}_k(g)(x).
		\]
		Using $\|Dg^{-1}\|_{C^0} \le {s^{-1}}$ and $\|\mathcal{M}(g)\|_{C^0} \le {(q+a)/s}$, the contraction factor satisfies
		\[
		\rho_{k+1} := {\frac{q+a}{s^{k+2}}} < 1,
		\]
		by the second condition in $B_\phi^{(k+1)}$.
{
To see the origin of the leading coefficient, temporarily write $g=g_n$
for a $C^{k+1}$ iterate, and put $h=g^{-1}$ and $m=k+1\ge2$.
Differentiating $g\circ h=\mathrm{Id}$ $m$ times gives
\[
Dg(h)D^m h+D^m g(h)[Dh]^{\otimes m}
+\mathcal R^{\mathrm{inv}}_{m-1}=0,
\]

Here the superscript $\mathrm{inv}$ labels the remainder arising from
the inverse relation $g\circ h=\mathrm{Id}$; it does not denote an
inverse operation. More precisely, $\mathcal R^{\mathrm{inv}}_{m-1}$
is the sum of the remaining Fa\`a di Bruno terms, in which the order
of the derivative of $g$ lies between $2$ and $m-1$. Each such term
has the form $D^j g(h)[D^{i_1}h,\ldots,D^{i_j}h]$, with its
multilinear arguments distributed among the factors, where
$2\le j\le m-1$, $i_1,\ldots,i_j\ge1$, and
$i_1+\cdots+i_j=m$. Thus only derivatives of $g$ and $h$ through
order $m-1$ occur. When $m=2$, the sum is empty and
$\mathcal R^{\mathrm{inv}}_1=0$. The dependence on $g$, $h$, and
$x$ is suppressed in this notation.

Multiplication by $[Dg(h)]^{-1}=Dh$ gives
\[
D^m h=-Dh\,D^m g(h)[Dh]^{\otimes m}
-Dh\,\mathcal R^{\mathrm{inv}}_{m-1}.
\]
Writing $\chi=\Lambda_1\phi_2-M\phi_1$, the term involving $D^m h$
in $D^mT(g)$ is $(D\chi(h)-M)D^m h$.
Substitution yields the stated leading coefficient
$\mathcal M(g)=(M-D\chi(h))Dh$.
All remaining inverse derivatives can be expressed recursively
using derivatives of $g$ through order $k$.
Here the variable $v$ ranges over the Banach space of continuous
periodic symmetric $(k+1)$-linear maps from $(\mathbb R^d)^{k+1}$
to $\mathbb R^d$; symmetry refers to the input variables.
The remainder $\mathcal R_k$ involves derivatives of the
perturbations up to order $k+1$, and derivatives of $g_n$ only up
to order $k$.
Uniform invertibility of $Dg_n$ and $C^k$ convergence of
$g_n-\mathrm{Id}$ ensure that $F_{k+1}(v,g_n)$ converges uniformly
to $F_{k+1}(v,g)$ for each fixed continuous field $v$.
Thus the parameter-continuity hypothesis of
Lemma~\ref{lem:convergence_of_derivatives} is satisfied.
}
Since {$g_n-\mathrm{Id}\to g-\mathrm{Id}$ in $C^k$}, Lemma~\ref{lem:convergence_of_derivatives} applies and yields $v_n = D^{k+1} g_n \to v$ uniformly. Thus $g \in L_L^{(k+1)}$.
		
		\medskip \noindent\textit{Step 3: Fractional regularity $r = k + \beta$ ($k \ge 1, \beta \in (0, 1)$).}\par
		Let $0 < L < 1 - {q^{1/(k+1+\beta)}}$. By the integer-order results, {$g_n-\mathrm{Id}\to g-\mathrm{Id}$ in $C^k$}. We now prove that $[D^k g]_\beta < \infty$.
		
		Let $v_n := D^k g_n$ and $H_n := [v_n]_\beta$. Differentiating the recurrence $g_{n+1} = T(g_n)$, we evaluate the difference $v_{n+1}(x) - v_{n+1}(y)$ for any distinct points $x, y \in \mathbb{R}^d$:

			\noindent\textbf{Case 1: $k = 1$.}
			For $k=1$, \begin{align*}v_{n+1}(x) = (I_d + M) + D\phi_1(x) + \big[\Lambda_1 D\phi_2(g_n^{-1}(x)) - M D\phi_1(g_n^{-1}(x)) - M\big] [v_n(g_n^{-1}(x))]^{-1}.\end{align*} Writing the difference as
			\begin{align*}
				&v_{n+1}(x) - v_{n+1}(y)\\
 =& D\phi_1(x) - D\phi_1(y) + \big[\Lambda_1 D\phi_2(g_n^{-1}(x)) - M D\phi_1(g_n^{-1}(x)) - M\big] \Big( [v_n(g_n^{-1}(x))]^{-1} - [v_n(g_n^{-1}(y))]^{-1} \Big) \\
				&\quad + \Big( [\Lambda_1 D\phi_2(g_n^{-1}(x)) - M D\phi_1(g_n^{-1}(x))] - [\Lambda_1 D\phi_2(g_n^{-1}(y)) - M D\phi_1(g_n^{-1}(y))] \Big) [v_n(g_n^{-1}(y))]^{-1},
			\end{align*}
			and applying the matrix identity $A^{-1} - B^{-1} = -A^{-1}(A - B)B^{-1}$ to linearize the inverse:
			\[
			[v_n(g_n^{-1}(x))]^{-1} - [v_n(g_n^{-1}(y))]^{-1} = - [v_n(g_n^{-1}(x))]^{-1} \big( v_n(g_n^{-1}(x)) - v_n(g_n^{-1}(y)) \big) [v_n(g_n^{-1}(y))]^{-1},
			\]
			the norm of the second term is bounded by
			\[
			{\frac{q+a}{s^2}} \|v_n(g_n^{-1}(x)) - v_n(g_n^{-1}(y))\|.
			\]
			Dividing by $|x - y|^\beta$ and using the coordinate stretching
			\[
			\frac{|g_n^{-1}(x) - g_n^{-1}(y)|^\beta}{|x - y|^\beta} \le \|g_n^{-1}\|_{\mathrm{Lip}}^\beta \le {s^{-\beta}},
			\]
			this leading term yields
			\[
			{\frac{q+a}{s^2}} \cdot {s^{-\beta}} H_n = {\frac{q+a}{s^{2+\beta}}} H_n = \rho_{1, \beta} H_n.
			\]
			Since $\phi_1, \phi_2 \in C^{1, \beta}$ and $\|v_n^{-1}\|_{C^0} \le {s^{-1}}$, the remaining terms divided by $|x - y|^\beta$ are uniformly bounded by a constant independent of $n$.

			\noindent\textbf{Case 2: $k \ge 2$.}
			Applying the Fa\`a di Bruno formula to isolate the highest-order derivative term, we write
			
			\[
			v_{n+1}(x) = \mathcal{M}(g_n)(x) \cdot v_n(g_n^{-1}(x)) \cdot \bigl[Dg_n^{-1}(x)\bigr]^{\otimes k} + \mathcal{R}_{k-1}(g_n)(x).
			\]
			Decomposing the difference of the leading product as
			\begin{align*}
				&\mathcal{M}(g_n)(x) \cdot v_n(g_n^{-1}(x)) \cdot [Dg_n^{-1}(x)]^{\otimes k} - \mathcal{M}(g_n)(y) \cdot v_n(g_n^{-1}(y)) \cdot [Dg_n^{-1}(y)]^{\otimes k} \\
				&\quad = \mathcal{M}(g_n)(x) \cdot \big( v_n(g_n^{-1}(x)) - v_n(g_n^{-1}(y)) \big) \cdot [Dg_n^{-1}(x)]^{\otimes k} \\
				&\qquad + \big( \mathcal{M}(g_n)(x) - \mathcal{M}(g_n)(y) \big) \cdot v_n(g_n^{-1}(y)) \cdot [Dg_n^{-1}(x)]^{\otimes k} \\
				&\qquad + \mathcal{M}(g_n)(y) \cdot v_n(g_n^{-1}(y)) \cdot \Big( [Dg_n^{-1}(x)]^{\otimes k} - [Dg_n^{-1}(y)]^{\otimes k} \Big),
			\end{align*}
			the first term divided by $|x - y|^\beta$ is bounded by
			\begin{align*}
			&\|\mathcal{M}(g_n)\|_{C^0} \|Dg_n^{-1}\|_{C^0}^k \frac{\|v_n(g_n^{-1}(x)) - v_n(g_n^{-1}(y))\|}{|g_n^{-1}(x) - g_n^{-1}(y)|^\beta} \left( \frac{|g_n^{-1}(x) - g_n^{-1}(y)|}{|x - y|} \right)^\beta\\
 \le & {\frac{q+a}{s^{k+1+\beta}}} H_n = \rho_{k, \beta} H_n.
			\end{align*}
			
For $k\ge2$, uniform $C^k$ bounds on $g_n-\mathrm{Id}$ give uniform Lipschitz bounds on derivatives of $g_n$ through order $k-1$ and on the corresponding derivatives of $g_n^{-1}$. The perturbations belong to $C^{k,\beta}$, so their derivatives of order $k$ have bounded $\beta$-H\"older seminorms. Thus the coefficient differences in the product decomposition and all differences of the remainder $\mathcal R_{k-1}(g_n)$, divided by $|x-y|^\beta$, are uniformly bounded independently of $n$.

			{In both cases, the H\"older seminorms satisfy}
			\[
			H_{n+1} \le \rho_{k, \beta} H_n + {C_H}, \quad \text{with} \quad \rho_{k, \beta} := {\frac{q+a}{s^{k+1+\beta}}},
			\]
			{where $C_H>0$ is independent of $n$.} By the second condition in ${B_\phi^{(r)}}$, $\rho_{k, \beta} < 1$. Since $g_0 = \mathrm{Id}$ is smooth ($H_0 = 0$), iterating the recurrence yields $H_n \le \frac{{C_H}}{1 - \rho_{k, \beta}} =: K < \infty$ for all $n$. Passing to the limit gives $[D^k g]_\beta \le K < \infty$. Hence $g \in {L_L^{(r)}}$.

	\begin{proof}[Proof of Theorem~\ref{thm:graph_persistence}]
The claim follows by Theorem~\ref{thm:DSM_Cr_smoothness} that ensure that a fixed point $g\in L_L^{(r)}$ exists, together with Proposition~\ref{pro:Herman}, proving that the function in
\eqref{eq:psi_explicit_formula} defines a $C^r$ invariant graph.
The hypotheses imply $L<1-\|M\|$ and
$(\phi_1,\phi_2)\in B_\phi^{(0)}$.
Lemma~\ref{lem:Lipschitz-I-G} and its proof give exponential attraction,
uniformly on bounded sets. Its uniqueness argument also excludes
any other continuous invariant graph.
\end{proof}

\subsection{A sufficient bound on the two perturbations}
\begin{Corollary}\label{cor:thm1}
		Let $\delta = 1 - \|M\|$. If $\|\Lambda_1\| < 1$, then for any real number $r \ge 1$, provided that
		\begin{equation}
			\max\{\|\phi_1\|_{C^1}, \|\phi_2\|_{C^1}\} < \frac{-B + \sqrt{B^2 + 16r\delta^2}}{8},
		\end{equation}
		where $B = 3(r+1)^2 + 2(r-1)\delta$, the map \eqref{eq:dissipative_map} admits a unique $C^r$-smooth invariant graph.
	\end{Corollary}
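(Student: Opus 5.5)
The plan is to derive the Corollary directly from Theorem~\ref{thm:graph_persistence}. I will exhibit an admissible $L$ for which both inequalities in \eqref{eq:persistence_smallness} hold. Uniqueness and $C^r$ regularity of the graph then come for free from that theorem.

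\textbf{Step 1: reduce to scalar inequalities.} Set $\varepsilon:=\max\{\|\phi_1\|_{C^1},\|\phi_2\|_{C^1}\}$ and $q:=\|M\|=1-\delta$. Since $\|\Lambda_1\|<1$ and $q<1$,
\[
b:=\|\phi_1\|_{C^1}\le\varepsilon,\qquad a:=\|\Lambda_1\phi_2-M\phi_1\|_{C^1}\le(\|\Lambda_1\|+q)\varepsilon<2\varepsilon .
\]
Put $s=1-L$. The first condition of \eqref{eq:persistence_smallness} is $b+a/s\le L(s-q)/s$, i.e. $sb+a\le L(\delta-L)$. It is implied by
\[
3\varepsilon\le L(\delta-L).
\]
For the second condition I use Bernoulli's inequality $(1-L)^{r+1}\ge1-(r+1)L$, valid for $r\ge1$ and $L\in(0,1)$. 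Then $q+a<s^{r+1}$ is implied by
\[
2\varepsilon\le\delta-(r+1)L .
\]
At $r=1$ the required inequality is non-strict, so this sufficient condition is also enough there. Both target conditions are made strict by the strict inequality $a<2\varepsilon$.

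\textbf{Step 2: admissibility of $L$ is automatic.} The admissibility constraint $L<1-q^{1/(r+1)}$ (or $L\le1-\sqrt q$ when $r=1$) follows once $q+a<s^{r+1}$ holds. Indeed, that inequality gives $q<(1-L)^{r+1}$, hence $L<1-q^{1/(r+1)}$. So only the two scalar inequalities of Step~1 need to be satisfied.

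\textbf{Step 3: choose $L$ (the main obstacle).} I need some $L\in(0,\delta)$ with
\[
3\varepsilon\le L(\delta-L)\quad\text{and}\quad(r+1)L\le\delta-2\varepsilon .
\]
The second condition caps $L$ at $L_{\max}:=(\delta-2\varepsilon)/(r+1)$. The function $L(\delta-L)$ is increasing on $[0,\delta/2]$, and $L_{\max}\le\delta/2$. So I would take $L$ equal to $L_{\max}$ or slightly below it, and it suffices to verify
\[
3\varepsilon(r+1)^2\le(\delta-2\varepsilon)\bigl(r\delta+2\varepsilon\bigr).
\]
Expanding the right-hand side gives $r\delta^2+2(1-r)\delta\varepsilon-4\varepsilon^2$. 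The condition therefore becomes
\[
4\varepsilon^2+\bigl(3(r+1)^2+2(r-1)\delta\bigr)\varepsilon\le r\delta^2,
\]
which is $4\varepsilon^2+B\varepsilon\le r\delta^2$. This holds exactly when $\varepsilon$ lies below the positive root $\bigl(-B+\sqrt{B^2+16r\delta^2}\bigr)/8$, which is the hypothesis of the Corollary.

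The delicate part is confirming that the constant $3$ from $s+2\le3$ and the factor $2$ from $a<2\varepsilon$ combine to give precisely this $B$. I also need to keep all inequalities strict where $r>1$ requires them, using the hypothesis $\varepsilon<$ root and $a<2\varepsilon$ to absorb any slack. Finally, I must check that $L>0$, i.e. $2\varepsilon<\delta$; this follows from $4\varepsilon^2+B\varepsilon<r\delta^2$ because $B\ge3(r+1)^2$.
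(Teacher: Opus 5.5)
Your proposal is correct and follows essentially the same route as the paper: you bound $\|\Lambda_1\phi_2-M\phi_1\|_{C^1}\le2\varepsilon$, use Bernoulli to reduce to $3\varepsilon\le L(\delta-L)$ and $2\varepsilon\le\delta-(r+1)L$, and identify the threshold as the positive root of $4x^2+Bx-r\delta^2$. The only difference is the choice of $L$. You take $L=(\delta-2\varepsilon)/(r+1)$ and check the first inequality through the quadratic, whereas the paper takes $L^*=(\delta-2m^*)/(r+1)$ at the intersection defined by the root $m^*$. One small point: your strictness claim $a<2\varepsilon$ fails when $\varepsilon=0$, but then the strict Bernoulli inequality $(1-L)^{r+1}>1-(r+1)L$ for $L>0$ gives the strictness needed when $r>1$.
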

	
	\begin{proof}
		Denote $m := \max\{\|\phi_1\|_{C^1}, \|\phi_2\|_{C^1}\}$. Since $\|\Lambda_1\| < 1$ and $\|M\| < 1$, we have $\|\Lambda_1 \phi_2 - M \phi_1\|_{C^1} {\le} 2m$. According to Theorem~\ref{thm:DSM_Cr_smoothness}, it suffices to find $L \in (0, 1 - \|M\|^{\frac{1}{r+1}})$ satisfying $3m \le L(\delta - L)$ and $2m \le \delta - (r+1)L$. Maximizing the allowable bound $m$ leads to the optimization problem:
		\[
		m^* := \max_{L} \min \left\{ \frac{L(\delta - L)}{3}, \; \frac{\delta - (r+1)L}{2} \right\}.
		\]
		
The optimization is taken over $0\le L\le\delta/(r+1)$. On this interval the first function is increasing and the second is decreasing, so the maximum occurs at their unique intersection, where $L^*=(\delta-2m^*)/(r+1)$. Moreover,
\[
L^*<\frac{\delta}{r+1}<1-\|M\|^{1/(r+1)},\qquad
(1-L)^{r+1}\ge1-(r+1)L.
\]
For $m<m^*$, choose $L=L^*$. Then $3m<3m^*=L^*(\delta-L^*)$ and
\[
\|\Lambda_1\phi_2-M\phi_1\|_{C^1}
\le2m<2m^*=\delta-(r+1)L^*
\le(1-L^*)^{r+1}-\|M\|.
\]
Moreover, $\|\phi_1\|_{C^1}(1-L^*)+
\|\Lambda_1\phi_2-M\phi_1\|_{C^1}\le3m$.
Thus both conditions of Theorem~\ref{thm:DSM_Cr_smoothness} hold,
including their required strict form for $r>1$.
 Substituting this into \[3m^* = L^*(\delta - L^*)\] yields the quadratic equation
		\[
		4(m^*)^2 + \big[3(r+1)^2 + 2(r-1)\delta\big]m^* - r\delta^2 = 0.
		\]
		Solving for the positive root yields $m^* = \frac{-B + \sqrt{B^2 + 16r\delta^2}}{8}$.
	\end{proof}

\section{An example of a non-Lagrangian graph}\label{sec:non_lagrangian}
For $d\ge2$, fix $\lambda\in(0,1)$ and $\omega\in\mathbb R^d$.
Consider
\[
 F^{\phi_\varepsilon}(x,y)
 :=\bigl(x+\omega+y,\,\lambda y+\phi_\varepsilon(x)\bigr),\qquad
 \phi_\varepsilon(x)
 :=\varepsilon\bigl(\sin(2\pi x_2),\sin(2\pi x_1),0,\ldots,0\bigr)^T.
\]
For every finite real $r\ge1$, we will choose $\varepsilon_0>0$ such
that, for $0<\varepsilon<\varepsilon_0$, this is a dissipative twist map
with a unique $C^r$ invariant graph, and the graph is not Lagrangian.
The maps are real analytic and converge to the integrable map as
$\varepsilon\to0$ on each fixed complex strip. Thus the Lagrangian
property can fail under arbitrarily small analytic perturbations.
We have $\|\phi_\varepsilon\|_{C^0}=\sqrt2\varepsilon$ and
$\|D\phi_\varepsilon\|_{C^0}=2\pi\varepsilon$.

	\medskip \noindent\textit{Step 1: Dissipative twist property and non-conformal symplecticity.}\par
	 Writing $F^{\phi_\varepsilon}(x, y)=(X,Y)$,
	since $\partial_y X(x, y) = I_d$ is non-degenerate, the twist condition  holds. By the block determinant formula,
	\[
	\begin{aligned}
 \det DF^{\phi_\varepsilon}(x,y)
 &=\det\begin{pmatrix}I_d&I_d\\D\phi_\varepsilon(x)&\lambda I_d\end{pmatrix}
 =\det(\lambda I_d-D\phi_\varepsilon(x))\\
 &=\lambda^{d-2}\big[\lambda^2-(2\pi\varepsilon)^2
       \cos(2\pi x_1)\cos(2\pi x_2)\big].
\end{aligned}
	\]
	Whenever $\varepsilon < \frac{\lambda}{2\pi}$ and $\varepsilon < \frac{\sqrt{1-\lambda^2}}{2\pi}$, the determinant satisfies the uniform bounds
	\[
	0 < \lambda^{d-2}(\lambda^2 - (2\pi\varepsilon)^2) \le \det DF^{\phi_\varepsilon}(x, y) \le \lambda^{d-2}(\lambda^2 + (2\pi\varepsilon)^2) < 1.
	\]
	To verify global invertibility, let $(X, Y) = F^{\phi_\varepsilon}(x, y)$, eliminating $y = X - x - \omega$ yields $\lambda x - \phi_\varepsilon(x) = \lambda(X - \omega) - Y$. Since $\|D\phi_\varepsilon\|_{C^0} < \lambda$, the map $x \mapsto \frac{1}{\lambda}(\phi_\varepsilon(x) + \lambda(X-\omega) - Y)$ is a strict contraction on $\mathbb{R}^d$. By the Banach fixed-point theorem, $x$ and consequently $y$ are uniquely determined. An explicit degree-one isotopy to the identity is given by $F_t(x, y) = \big(x + t(\omega + y), \; (1-t+t\lambda)y + t\phi_\varepsilon(x)\big)$ for $t \in [0, 1]$. The map is a dissipative twist map.
	
	Furthermore, {pulling back the symplectic form $\Omega$ gives}
	\begin{align*}
		&(F^{\phi_\varepsilon})^*\Omega - \lambda \Omega\\
		=& \sum_{i=1}^d (dx_i + dy_i) \wedge (\lambda dy_i + d\phi_{\varepsilon, i}) - \lambda \sum_{i=1}^d dx_i \wedge dy_i \\
		= &2\pi\varepsilon \big(\cos(2\pi x_2) - \cos(2\pi x_1)\big) dx_1 \wedge dx_2 + 2\pi\varepsilon \cos(2\pi x_2)\, dy_1 \wedge dx_2 + 2\pi\varepsilon \cos(2\pi x_1)\, dy_2 \wedge dx_1.
	\end{align*}
	Because terms of type $dy_i \wedge dx_j$ are non-vanishing on an open set, $(F^{\phi_\varepsilon})^*\Omega$ cannot be proportional to $\Omega$. Thus $F^{\phi_\varepsilon}$ is not conformally symplectic for any factor.
	
	\medskip \noindent\textit{Step 2: Persistence and regularity of the invariant graph.}\par
	The map fits into model \eqref{eq:dissipative_map} with $\Lambda_1 = I_d$, $\Lambda_2 = \lambda I_d$, $\alpha_1 = \omega$, $\alpha_2 = 0$, $\phi_1 = 0$, and $\phi_2 = \phi_\varepsilon$. Here $M = \lambda I_d$, $\|M\| = \lambda < 1$, and $\Lambda_1 \phi_2 - M \phi_1 = \phi_\varepsilon$.
	For $0 < L < 1 - \lambda^{\frac{1}{r+1}}$, the smallness conditions of Theorem~\ref{thm:DSM_Cr_smoothness} reduce to
	\[
	2\pi\varepsilon \le L(1 - L - \lambda) \quad \text{and} \quad 2\pi\varepsilon < (1-L)^{r+1} - \lambda.
	\]
	Under these conditions, Theorem~\ref{thm:DSM_Cr_smoothness} yields a unique fixed point $g \in L_L^{(r)}$, defining a unique $C^r$-smooth invariant graph $\Gamma_\psi = \{(x, \psi(x)) \mid x \in \mathbb{R}^d\}$ with $\psi(x) = g(x) - x - \omega$ and $\|D\psi\|_{C^0} \le L$. Its projection $\bar{\Gamma}_\psi$ is the global attractor.
	
	\medskip \noindent\textit{Step 3: Destruction of the Lagrangian property.}\par
	Differentiating the invariance relation $\psi(g(x)) = \lambda \psi(x) + \phi_\varepsilon(x)$ yields
	\[
	D\psi(g(x)) \big(I_d + D\psi(x)\big) = \lambda D\psi(x) + D\phi_\varepsilon(x).
	\]
	Since $\|D\psi\|_{C^0} \le L < 1$, $I_d + D\psi(x)$ is invertible with $\|(I_d + D\psi(x))^{-1}\| \le \frac{1}{1-L}$. Taking the supremum over $x \in \mathbb{R}^d$ and using the surjectivity of the diffeomorphism $g$, we obtain
	\[
	\|D\psi\|_{C^0} \le \frac{\lambda \|D\psi\|_{C^0} + 2\pi\varepsilon}{1-L} \implies \|D\psi\|_{C^0} \le \frac{2\pi\varepsilon}{1-L-\lambda} =: C_{\lambda, L} \varepsilon.
	\]
	Now, assume for contradiction that $\Gamma_\psi$ is Lagrangian, so that $D\psi(x) = (D\psi(x))^T$ for all $x$. Transposing the differentiated invariance relation and subtracting the transpose yields the commutator identity:
	\begin{equation}\label{eq:commutator_identity}
		D\psi(g(x)) D\psi(x) - D\psi(x) D\psi(g(x)) = D\phi_\varepsilon(x) - \big(D\phi_\varepsilon(x)\big)^T.
	\end{equation}
	Evaluating the $(1,2)$-entry of the right-hand side at $x_0 = (0, \frac{1}{2}, 0, \dots, 0)^T$ gives
	\[
	\left| \big[ D\phi_\varepsilon(x_0) - (D\phi_\varepsilon(x_0))^T \big]_{12} \right| = 2\pi\varepsilon |\cos(\pi) - \cos(0)| = 4\pi\varepsilon.
	\]
	On the other hand, estimating the $(1,2)$-entry of the commutator on the left-hand side via the operator norm yields
	\[
	\left| \big[ D\psi(g(x_0)) D\psi(x_0) - D\psi(x_0) D\psi(g(x_0)) \big]_{12} \right| \le 2 \|D\psi\|_{C^0}^2 \le 2 C_{\lambda, L}^2 \varepsilon^2.
	\]
	Combining these estimates gives $4\pi\varepsilon \le 2 C_{\lambda, L}^2 \varepsilon^2$, which forces $\varepsilon \ge \frac{2\pi}{C_{\lambda, L}^2}$. Defining
	\begin{equation}\label{eq:eps_threshold_final}
		\varepsilon_0 := \min\left\{ \frac{\lambda}{2\pi}, \; \frac{\sqrt{1-\lambda^2}}{2\pi}, \; \frac{L(1-L-\lambda)}{2\pi}, \; \frac{(1-L)^{r+1}-\lambda}{2\pi}, \; \frac{2\pi}{C_{\lambda, L}^2} \right\} > 0,
	\end{equation}
	For all $0 < \varepsilon < \varepsilon_0$, {this contradicts $\varepsilon\ge 2\pi/C_{\lambda,L}^2$, so $\Gamma_\psi$ is not Lagrangian.} This proves the claimed properties of the example.

	\vspace{1em}

\section{Exact conformally symplectic twist maps}\label{sec:ecs}
{\color{black}
Theorem~\ref{thm:main_lagrangian} combines the persistence result of
Theorem~\ref{thm:graph_persistence} with the exact conformally symplectic
identities. We prove $C^1$ persistence and the exact Lagrangian property,
including the endpoint, in Theorem~\ref{thm:Lagrangian}, and higher
regularity in Theorem~\ref{thm:r-map}. After discussing sharpness, we
conclude this section with the proof of Theorem~\ref{thm:main_lagrangian}.
\par}
We consider the ECS lift $F:=F^{\Phi}_{\lambda,\alpha}$ from
\eqref{eq:conformal_symplectic_twist_map}, with $0<\lambda<1$ and a periodic
potential $\Phi$.

\subsection{Exact structure and $C^1$ persistence}\label{sec:ecs_c1}
Write $F(x,y)=(X,Y)$, so that
\[
Y=\lambda y+D\Phi(x),\qquad X=x+\alpha+Y.
\]
For $\Phi\in C^2(\mathbb T^d,\mathbb R)$, the generating function is
\begin{equation}\label{eq:generating_function}
S_{\alpha,\Phi}(x,X):=\frac12|X-x|^2-\langle\alpha,X-x\rangle+\Phi(x).
\end{equation}
It satisfies
\[
\lambda y=-\partial_x S_{\alpha,\Phi}(x,X),\qquad
Y=\partial_X S_{\alpha,\Phi}(x,X).
\]
When $\Phi=0$, we write $S_\alpha$ for $S_{\alpha,0}$.
In terms of the canonical forms $\vartheta$ and $\Omega$ defined in
Section~\ref{sec:notation}, the primitive relation is
\begin{equation}\label{eq:primitive_relation}
F^*\vartheta-\lambda\vartheta=dG,
\end{equation}
where
\begin{equation}\label{eq:primitive_G}
G(x,y):=\Phi(x)+\frac12\left|\lambda y+D\Phi(x)\right|^2.
\end{equation}
The function $G$ is $C^1$ and periodic in $x$, so it descends to the
cylinder. These identities verify the exact conformally symplectic
condition of Definition~\ref{def:Conformally Symplectic Twist Map}.

For general conformally symplectic maps, invariant submanifolds need not
be isotropic. Arnaud and F\'ejoz \cite[Proposition~3]{AF24} prove that a
two-dimensional $C^1$ closed invariant surface is isotropic, hence
Lagrangian when $d=2$. In higher dimensions their Proposition~4 gives
isotropy under an additional entropy condition. For
\eqref{eq:conformal_symplectic_twist_map}, the following result uses the
formula for $DF$ and requires no additional assumption on the dynamics
within the graph.

\begin{Theorem}\label{thm:Lagrangian}
		Let \(F^{\Phi}_{\lambda,\alpha}\) be the ECS twist map defined in \eqref{eq:conformal_symplectic_twist_map}. Assume \(\Phi \in C^2(\mathbb{T}^d, \mathbb{R})\) satisfies
		\begin{equation}\label{critc}
			{\|D\Phi\|_{C^1} \le (1 - \sqrt{\lambda})^2.}
		\end{equation}
		Then \(F^{\Phi}_{\lambda,\alpha}\) admits a unique \(C^1\) invariant exact Lagrangian graph.
	\end{Theorem}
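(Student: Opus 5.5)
Our approach has three stages: get the graph from Theorem~\ref{thm:graph_persistence} at its $r=1$ endpoint, show it is Lagrangian with a contraction argument on the antisymmetric part of $D\psi$, and get exactness by averaging the Liouville form.

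\textbf{Existence.} The map \eqref{eq:conformal_symplectic_twist_map} fits \eqref{eq:dissipative_map} with $\Lambda_1=\lambda I_d$, $\Lambda_2=\lambda I_d$, $\alpha_1=\alpha$, $\alpha_2=0$, $\phi_1=\phi_2=D\Phi$ (after rescaling $y$). Then $M=\lambda I_d$, $q=\lambda$, and $\Lambda_1\phi_2-M\phi_1=0$, so $a=0$ and $b=\|D\Phi\|_{C^1}$.

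Choose $L=1-\sqrt\lambda$, so $s=\sqrt\lambda$. The second condition, $q+a=\lambda\le s^2$, holds with equality. The first becomes
\[
b\le L(s-q)/s=(1-\sqrt\lambda)(\sqrt\lambda-\lambda)/\sqrt\lambda=(1-\sqrt\lambda)^2,
\]
which is exactly \eqref{critc}. Theorem~\ref{thm:graph_persistence} then gives a unique continuous invariant graph. It is $C^1$, globally attracting, and satisfies $\|Dg-I_d\|\le L$.

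\textbf{Lagrangian property.} From Proposition~\ref{pro:Herman}, $\psi(x)=g(x)-x-\alpha$ in the rescaled variables, which is a multiple of the original graph function. The invariance relation $\psi(g(x))=\lambda\psi(x)+D\Phi(x)$ differentiates to
\[
D\psi(g(x))\,Dg(x)=\lambda D\psi(x)+D^2\Phi(x),\qquad Dg=I_d+D\psi .
\]
Write $P(x)=D\psi(x)$ and let $A(x)=P-P^T$ be its antisymmetric part. The goal is $A\equiv0$. Equivalently, the pullback of $\Omega$ to the graph vanishes; this is the conformal identity $F^*\Omega=\lambda\Omega$ restricted to the graph.

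Let $\iota(x)=(x,\psi(x))$, so that $\iota^*\Omega$ is the $2$-form with matrix $A$. Invariance gives $F\circ\iota=\iota\circ g$, hence
\[
g^*(\iota^*\Omega)=\lambda\,\iota^*\Omega ,
\]
that is,
\[
Dg(x)^T A(g(x))\, Dg(x)=\lambda A(x).
\]
Solving for $A(g(x))$ and taking sup norms over $x$, using surjectivity of $g$, yields
\[
\|A\|_{C^0}\le \lambda\,\|Dg^{-1}\|^2\,\|A\|_{C^0}\le \frac{\lambda}{s^2}\|A\|_{C^0}=\|A\|_{C^0}.
\]
This ratio equals $1$ exactly at the endpoint, so the bound gives nothing there.

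Strictly inside the threshold we could take $L<1-\sqrt\lambda$ (the condition is then slack), and the ratio $\lambda/(1-L)^2<1$ forces $A=0$. At the endpoint we need an equality analysis, which is the main obstacle. We would argue as in Lemma~\ref{lem:strict_inverse_endpoint}.

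Take $x_0$ where $\|A(g(x_0))\|$ attains its maximum, which exists by compactness of $\mathbb T^d$. Equality in the chain forces $Dg(x_0)^{-1}$ to act as $s^{-1}$ times an isometry on the extremal vectors of $A(g(x_0))$. Equality $|Dg\,w|=s|w|$ with $\|Dg-I_d\|\le L$ forces $Dg\,w=sw$, and also $Dg^Tw=sw$ by the $R^T$ trick. Propagating this along the orbit, equality must persist at every iterate, giving $\|A(g^n(x_0))\|=\|A\|_{C^0}$ for all $n$.

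A cleaner alternative, which we would try first, is an approximation argument. Apply the strict-case conclusion to $\Phi_t=t\Phi$ with $t<1$; each $\Phi_t$ satisfies the strict condition, so its graph $\psi_t$ is Lagrangian. Continuity of the fixed point in $C^1$ as $t\to1$ would then pass the closedness of $D\psi_t$ to the limit. That continuity should follow from Lemma~\ref{lem:nonlinear_convergence_endpoint} with the parameter $h$ replaced by $(h,t)$, since $\omega$ is uniform. With $A\equiv0$ we get $\psi=c+D\eta$.

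\textbf{Exactness.} Integrate the Liouville form identity \eqref{eq:primitive_relation} pulled back by $\iota$:
\[
g^*(\psi\cdot dx)-\lambda\,\psi\cdot dx=d(G\circ\iota).
\]
Cohomology classes on $\mathbb T^d$ are preserved by $g^*$, because $g$ is homotopic to the identity. Hence $(1-\lambda)[\psi\cdot dx]=0$, and since $\lambda<1$ this gives $c=0$, so the graph is exact. Uniqueness comes from Lemma~\ref{lem:Lipschitz-I-G}.
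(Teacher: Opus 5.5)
Your existence, uniqueness, strict-range contraction of the skew part $W=D\psi-D\psi^T$ (your $A$), and exactness steps match the paper. The difference is at the endpoint $\|D\Phi\|_{C^1}=(1-\sqrt\lambda)^2$. There only your second route closes as written.

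Two small corrections first. No rescaling of $y$ is needed, since $\Lambda_1=\Lambda_2=\lambda I_d$ and $\phi_1=\phi_2=D\Phi$ as they stand. Also, two formulas are off. For this map $\psi(x)=x-\alpha-g^{-1}(x)$ and $Dg=I_d+\lambda D\psi+D^2\Phi$, not $\psi=g-\mathrm{Id}-\alpha$ and $Dg=I_d+D\psi$.

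This slip does not affect your main line. But the correct formula is exactly what your first endpoint sketch is missing. Symmetry of $D^2\Phi$ gives $Dg-Dg^T=\lambda W$, so the pullback relation becomes $W(g(x))=Dg(x)^{-T}-Dg(x)^{-1}$. Lemma~\ref{lem:strict_inverse_endpoint}, applied to the pair $Dg(x),Dg(x)^T$, then gives $\|W(g(x))\|<\|W(x)\|$ whenever $W(x)\neq0$, and a maximum-point argument finishes. This is the paper's proof.

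Without that identity, your equality analysis only shows that maximal points of $\|W\|$ propagate along the orbit. That is not a contradiction. The pullback relation and $\|Dg-I_d\|\le L$ alone are compatible with a fixed point where $Dg=\sqrt\lambda\,I_d$ and $W\neq0$. The contradiction comes from combining $Dg(x)u=\sqrt\lambda\,u$ and $Dg(x)^Tu=\sqrt\lambda\,u$ on the extremal vectors $u,v$ with the identity. This forces $u^TW(x)v=\lambda^{-1}u^T(Dg(x)-Dg(x)^T)v=0$.

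Your approximation route is correct and genuinely different. Take $L=1-\sqrt\lambda$ and let $T_t$ be the operator for the potential $t\Phi$, $t\in[0,1]$. Each $T_t$ maps $L_L^{(0)}$ into itself with $C^0$-contraction factor $\lambda/(1-L)=\sqrt\lambda$, and $T_t(h)-T_1(h)=(t-1)D\Phi$. Hence $\|g_t-g_1\|_{C^0}\le(1-t)\|D\Phi\|_{C^0}/(1-\sqrt\lambda)$, and $\psi_t=\mathrm{Id}-\alpha-g_t^{-1}\to\psi_1$ uniformly. For $t<1$, the fixed point obtained with a smaller admissible $L_t$ is this same one, by uniqueness in the larger space.

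Distributional closedness of $\psi\cdot dx$ (Definition~\ref{lgr}) survives uniform limits, and $\psi_1\in C^1$, so $D\psi_1$ is symmetric. You therefore do not need $C^1$ continuity in $t$. That continuity is in any case not literally Lemma~\ref{lem:nonlinear_convergence_endpoint}, which concerns iterates rather than parameter-dependent fixed points. It does follow from $e\le\omega(e)+\varepsilon$ and upper semicontinuity.

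What each route buys: yours needs only the strict case, closedness of the Lagrangian condition, and uniformity of the $C^0$ contraction up to the endpoint. The paper's avoids the auxiliary family and shows directly, through the ECS structure of $Dg$, that the skew part strictly decreases along orbits.
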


\begin{Remark}
For the ECS model \eqref{eq:conformal_symplectic_twist_map}, \cite[Proposition~3.2]{SW26} proves that every \(C^0\) Lagrangian invariant graph is Lipschitz. Theorem~\ref{thm:Lagrangian} gives the stronger conclusion of a \(C^1\) exact Lagrangian invariant graph throughout the range \eqref{critc}. For \(d=1\), its \(C^1\) existence conclusion agrees with \cite[Theorem~1]{HLW26}; the proof below treats all dimensions.
\end{Remark}

\begin{proof}[Proof of Theorem~\ref{thm:Lagrangian}]

\medskip\noindent\textit{Step 1: Existence and uniqueness of the $C^1$ invariant graph.}\par
We apply Theorem~\ref{thm:DSM_Cr_smoothness} with $r=1$ and
\[
\Lambda_1=\Lambda_2=\lambda I_d,\quad
\alpha_1=\alpha,\quad\alpha_2=0,\quad
\phi_1=\phi_2=D\Phi.
\]
Thus $M=\lambda I_d$, $C=(1-\lambda)\alpha$, and
\[
\Lambda_1\phi_2-M\phi_1\equiv0,\qquad
T(g)(x)=(1+\lambda)x+(1-\lambda)\alpha+D\Phi(x)-\lambda g^{-1}(x).
\]
Set $L=1-\sqrt\lambda$. The first inequality in \eqref{eq:persistence_smallness} becomes
\[
\|D\Phi\|_{C^1}\le h(L):=\frac{L(1-L-\lambda)}{1-L}
=h(1-\sqrt\lambda)=(1-\sqrt\lambda)^2,
\]
and the second inequality is $0\le(1-L)^2-\lambda=0$.
Both hold by \eqref{critc}. Hence there is a unique $C^1$ fixed point $g$ with
\begin{equation}\label{eq:ECS_endpoint_base_bound}
\|Dg-I_d\|_{C^0}\le1-\sqrt\lambda,
\qquad{\|Dg^{-1}\|_{C^0}}\le\lambda^{-1/2}.
\end{equation}
{By Lemma~\ref{lem:Lipschitz-I-G}, the corresponding graph $\Gamma_\psi$ is unique among continuous invariant graphs, and its projection $\bar\Gamma_\psi$ is the global attractor.} In this model the graph function also satisfies
\begin{equation}\label{eq:ECS_graph_inverse_formula}
\psi(x)=x-\alpha-g^{-1}(x),\qquad
\|D\psi\|_{C^0}\le\frac{L}{1-L}
=\frac{1-\sqrt\lambda}{\sqrt\lambda}.
\end{equation}

\medskip\noindent\textit{Step 2: Lagrangian property.}\par
{We use the primitive relation \eqref{eq:primitive_relation}, where $G$ is periodic in $x$ and belongs to $C^1$ because $\Phi\in C^2$.}
The symmetry of $D^2\Phi$ gives directly
\begin{equation}\label{eq:conformal_Omega}
F^*\Omega=\sum_i(dx_i+dY_i)\wedge dY_i
=\lambda\Omega.
\end{equation}
This computation avoids differentiating a merely continuous pullback 1-form.
{Using the graph parameterization $\iota(x)=(x,\psi(x))$ from Proposition~\ref{pro:Herman}, set}
\[
\eta:=\iota^*\vartheta=\psi\cdot dx,\qquad
\omega_\psi:=\iota^*\Omega=-d\eta.
\]
{We identify these periodic forms with their descents to $\mathbb T^d$, and write $\bar g$ for the torus map induced by $g$. The identity $F\circ\iota=\iota\circ g$ implies}
\[
{\bar g^*\omega_\psi=\lambda\omega_\psi.}
\]
In the strict range, choosing an admissible $L<1-\sqrt\lambda$ in Step~1, the norm estimate gives
\(\|\omega_\psi\|_{C^0}\le\lambda{\|Dg^{-1}\|_{C^0}^2}
\|\omega_\psi\|_{C^0}\) with a coefficient smaller than one.
At the endpoint we use the equality structure of the same pullback relation.
Put
\[
P(x):=D\psi(x),\quad W(x):=P(x)-P(x)^T,\quad A(x):=Dg(x).
\]
The matrix $W$ represents $\omega_\psi$ in the convention
$\omega_\psi(x)[u,v]=u^TW(x)v$ for tangent vectors $u,v$.
{Since $A=I_d+\lambda P+D^2\Phi$, the symmetry of $D^2\Phi$ and the pullback identity give}
\[
A-A^T=\lambda W,\qquad A(x)^TW(g(x))A(x)=\lambda W(x).
\]
Consequently,
\begin{equation}\label{eq:ECS_skew_pullback}
W(g(x))=\lambda A(x)^{-T}W(x)A(x)^{-1}
=A(x)^{-T}-A(x)^{-1}.
\end{equation}
In the last equality we used
$A^{-T}(A-A^T)A^{-1}=A^{-T}-A^{-1}$.
By \eqref{eq:ECS_endpoint_base_bound} and
$\|A^T-I_d\|=\|A-I_d\|$, $A$ and $A^T$ belong to the matrix ball in Lemma~\ref{lem:strict_inverse_endpoint} with $1-L=\sqrt\lambda$.
Whenever $W(x)\ne0$, that lemma yields
\[
\|W(g(x))\|=\|A^{-T}-A^{-1}\|
<\lambda^{-1}\|A-A^T\|=\|W(x)\|.
\]

If $W\not\equiv0$, continuity and periodicity give a point $x_*$ at which $\|W\|$ attains its positive maximum. Set $x=g^{-1}(x_*)$. Equation~\eqref{eq:ECS_skew_pullback} ensures $W(x)\ne0$, so the strict inequality gives
\(\|W(x_*)\|<\|W(x)\|\le\|W(x_*)\|\), a contradiction.

Thus $W\equiv0$, so $\omega_\psi=0$ and the graph is Lagrangian.

\medskip\noindent\textit{Step 3: Exactness.}\par
Pulling back the primitive relation by $\iota$ gives
\[
{\bar g^*\eta-\lambda\eta=d(G\circ\iota).}
\]
Now $\eta$ is a closed $C^1$ 1-form. At this regularity its cohomology class can be read from its periods over closed curves. The $C^1$ homotopy
{\(g_t(x)=(1-t)x+t g(x)\) induces a homotopy $\bar g_t$ from the identity to $\bar g$ on the torus, since $g-\mathrm{Id}$ is periodic. For every closed $C^1$ curve $\gamma$ on $\mathbb T^d$, Stokes' theorem on the cylinder traced by $\bar g_t\circ\gamma$, together with $d\eta=0$, gives}
\[
{\int_\gamma\bar g^*\eta=\int_{\bar g\circ\gamma}\eta=\int_\gamma\eta.}
\]
Integration of the primitive relation gives
\((1-\lambda)\int_\gamma\eta=0\). Equivalently,
\begin{equation}\label{eq:cohomology_relation_step3}
{[\bar g^*\eta]}-\lambda[\eta]=(1-\lambda)[\eta]=0
\quad\text{in }H^1_{\mathrm{dR}}(\mathbb T^d).
\end{equation}
Since $0<\lambda<1$, all periods vanish. Define $U(x)$ by integrating
$\eta$ along a path from a fixed base point to $x$ on the torus.
Vanishing of all periods makes this definition independent of the path,
up to the chosen additive constant. It satisfies $dU=\eta$;
since the coefficients $\psi$ are $C^1$, we have $U\in C^2(\mathbb T^d)$.
Equivalently, the lift of $U$ is periodic and $\psi=DU$. This proves exactness.
\end{proof}

\begin{Remark}\label{rmk:weak_kam_variational}
		If {$\|D^2\Phi\|_{C^0} \le (1 - \sqrt{\lambda})^2$}, the weighted finite-interval action associated with the generating function $S_{\alpha,\Phi}$ in \eqref{eq:generating_function} is defined, for integers $m < n$, by
		\[
		\mathcal{A}_{m,n}(x_m, \ldots, x_n) := \sum_{k=m}^{n-1} \lambda^{-k} S_{\alpha, \Phi}(x_k, x_{k+1}).
		\]
		For variations with fixed endpoints $\xi_m = \xi_n = 0$, the second variation takes the form
		\[
		\delta^2 \mathcal{A}_{m,n} = \sum_{k=m}^{n-1} \lambda^{-k} \left( |\xi_{k+1} - \xi_k|^2 + \langle D^2\Phi(x_k)\xi_k, \xi_k \rangle \right).
		\]
		
With $a_k:=\lambda^{-k/2}\xi_k$, the endpoint conditions give $a_m=a_n=0$, and
\[
\delta^2\mathcal A_{m,n}
\ge\sum_{k=m}^{n-1}
\left(|\sqrt\lambda\,a_{k+1}-a_k|^2
-(1-\sqrt\lambda)^2|a_k|^2\right)
=\sqrt\lambda\sum_{k=m}^{n-1}|a_{k+1}-a_k|^2.
\]
The equality follows because the difference between the two sums is
$(\lambda-\sqrt\lambda)\sum_{k=m}^{n-1}(|a_{k+1}|^2-|a_k|^2)=0$.
If the final sum were zero, all $a_k$ would be equal; the endpoint
condition would then force every $a_k$ to vanish.
Thus it is strictly positive for every nonzero variation with fixed endpoints.  Orbit segments satisfy its Euler--Lagrange equations and are consequently the unique minimizers among lifted configurations with the same endpoints.

	\end{Remark}

\subsection{Higher regularity and its endpoint}\label{sec:ecs_higher}
Recall the threshold $\eta_r(\lambda)$ defined in \eqref{eq:eta_r}.

\begin{Theorem}\label{thm:r-map}
Let $\lambda\in(0,1)$ and $1\le r<\infty$. If
$\Phi\in C^{r+1}(\mathbb T^d,\mathbb R)$ satisfies
\[
\|D\Phi\|_{C^1}<\eta_r(\lambda),
\]
then $F^{\Phi}_{\lambda,\alpha}$ admits a unique $C^r$ invariant exact
Lagrangian graph. For $r=1$, the inequality may be replaced by $\le$,
by Theorem~\ref{thm:Lagrangian}.
\end{Theorem}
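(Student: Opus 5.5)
Specialize Theorem~\ref{thm:DSM_Cr_smoothness} to the ECS model, as in Step~1 of the proof of Theorem~\ref{thm:Lagrangian}: take $\Lambda_1=\Lambda_2=\lambda I_d$, $\alpha_1=\alpha$, $\alpha_2=0$, $\phi_1=\phi_2=D\Phi$. Then $M=\lambda I_d$, $\|M\|=\lambda$, and $\Lambda_1\phi_2-M\phi_1\equiv0$. The conditions \eqref{eq:persistence_smallness} collapse. The second one becomes $0<(1-L)^{r+1}-\lambda$, which is just $L<1-\lambda^{1/(r+1)}$. The first one becomes
\[
\|D\Phi\|_{C^1}\le h(L):=\frac{L(1-L-\lambda)}{1-L}.
\]
So it suffices to choose an admissible $L<1-\lambda^{1/(r+1)}$ with $h(L)>\|D\Phi\|_{C^1}$.

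The key computation is that $h$ is increasing on $(0,1-\sqrt\lambda]$ and that $r\ge1$ gives $1-\lambda^{1/(r+1)}\le 1-\sqrt\lambda$. Indeed,
\[
h(L)=1-\lambda-\Bigl(1-L+\frac{\lambda}{1-L}\Bigr),
\]
and $s+\lambda/s$ is decreasing for $s\ge\sqrt\lambda$, so $h$ increases in $L$ on this range. At $L=1-\lambda^{1/(r+1)}$, writing $s=\lambda^{1/(r+1)}$, we get
\[
h=1-\lambda-s-\lambda^{r/(r+1)}=(1-s)(1-\lambda^{r/(r+1)})=\eta_r(\lambda),
\]
since $(1-s)(1-s^r)=1-s-s^r+s^{r+1}$. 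Now $\|D\Phi\|_{C^1}<\eta_r(\lambda)$ holds strictly and $h$ is continuous. Hence we can pick $L$ slightly below $1-\lambda^{1/(r+1)}$ with $h(L)\ge\|D\Phi\|_{C^1}$, and $L$ is then strictly admissible. For $r=1$ this is the same choice, and the endpoint case is Theorem~\ref{thm:Lagrangian}. Regularity of the data is fine: $\Phi\in C^{r+1}$ gives $\phi_i=D\Phi\in C^r$, as Theorem~\ref{thm:graph_persistence} requires. Theorem~\ref{thm:graph_persistence} then produces a unique continuous invariant graph, which is $C^r$, with globally attracting projection.

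For the exact Lagrangian property, observe that $\|D\Phi\|_{C^1}<\eta_r(\lambda)\le\eta_1(\lambda)=(1-\sqrt\lambda)^2$. The inequality $\eta_r\le\eta_1$ follows from the monotonicity just shown, or directly since both factors of $\eta_r$ are dominated by $1-\sqrt\lambda$ when $r\ge1$. So Theorem~\ref{thm:Lagrangian} applies and gives a unique $C^1$ invariant exact Lagrangian graph. By uniqueness among continuous invariant graphs (Lemma~\ref{lem:Lipschitz-I-G}), it coincides with the $C^r$ graph above, so that graph is exact Lagrangian.

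The main obstacle is only the elementary verification that the optimal admissible $L$ is the upper limit $1-\lambda^{1/(r+1)}$. This needs $h$ to be monotone up to that point, which uses $r\ge1$ to stay left of the maximizer $1-\sqrt\lambda$. After that, the factorization yielding exactly $\eta_r(\lambda)$ and the strictness needed for $r>1$ are routine.
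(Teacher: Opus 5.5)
Your proposal is correct and follows the paper's proof essentially step for step. The cancellation $\Lambda_1\phi_2-M\phi_1\equiv0$ reduces the hypotheses to $\|D\Phi\|_{C^1}\le h(L)$ with $L<1-\lambda^{1/(r+1)}$; monotonicity of $h$ on $(0,1-\sqrt\lambda]$ gives $\sup h=\eta_r(\lambda)$; and Theorem~\ref{thm:Lagrangian} together with uniqueness of the continuous invariant graph yields the exact Lagrangian property.

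There are three slips, none of which affects the argument. First, $h(L)=1+\lambda-\bigl(1-L+\lambda/(1-L)\bigr)$, not $1-\lambda-\bigl(\cdots\bigr)$. Second, $s\mapsto s+\lambda/s$ is increasing in $s$ on $[\sqrt\lambda,\infty)$, hence decreasing in $L$; your conclusion that $h$ increases in $L$ is right, but the stated reason is backwards. Third, your ``direct'' justification of $\eta_r\le\eta_1$ is false, because $1-\lambda^{r/(r+1)}\ge1-\sqrt\lambda$. Rely on the monotonicity argument instead, as the paper does via $h(L)<h(1-\sqrt\lambda)$.
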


\begin{proof}[Proof of Theorem~\ref{thm:r-map}]

		{The case $r=1$, including equality, follows from Theorem~\ref{thm:Lagrangian}. For finite $r>1$, the cancellation $\Lambda_1\phi_2-M\phi_1\equiv0$ used in Step~1 of the proof of Theorem~\ref{thm:Lagrangian} reduces the conditions of Theorem~\ref{thm:DSM_Cr_smoothness} to finding $L$ with}
		\[
		0 < L < L_r := 1 - \lambda^{\frac{1}{r+1}}
		\]
		{such that $\|D\Phi\|_{C^1}\le h(L)$, with $h$ as defined there.}

Since $r>1$, we have $L_r<1-\sqrt\lambda$. The function $h$ is continuous on $[0,L_r]$ and strictly increasing there, so
		\[
\sup_{0<L<L_r}h(L)=h(L_r)
=\left(1-\lambda^{\frac1{r+1}}\right)
 \left(1-\lambda^{\frac r{r+1}}\right)=\eta_r(\lambda).
		\]
The hypothesis $\|D\Phi\|_{C^1}<\eta_r(\lambda)$ allows us to choose $L\in(0,L_r)$ with $\|D\Phi\|_{C^1}\le h(L)$. This choice also gives $(1-L)^{r+1}>\lambda$. Theorem~\ref{thm:DSM_Cr_smoothness} yields a unique $C^r$ invariant graph $\Gamma_\psi$, with
\[
\|Dg-I_d\|_{C^0}\le L,\qquad
\|D\psi\|_{C^0}\le\frac{L}{1-L},
\]
whose projection $\bar\Gamma_\psi$ is the global attractor.

Since $h(L)<h(1-\sqrt\lambda)=(1-\sqrt\lambda)^2$, Theorem~\ref{thm:Lagrangian} also applies. By uniqueness, it gives exactness and the Lagrangian property for the same graph.

		\end{proof}

The thresholds $\eta_r(\lambda)$ tend to zero as $r\to\infty$. Thus these estimates establish persistence for each finite $r\ge1$ but do not establish $C^\infty$ regularity for a fixed nonzero perturbation.

For later use, the threshold is strictly decreasing in $r>1$:
\[
\frac{d}{dr}\eta_r(\lambda)
=\frac{\log\lambda}{(r+1)^2}
\left(\lambda^{1/(r+1)}-\lambda^{r/(r+1)}\right)<0.
\]
Here $\log\lambda<0$, whereas the expression in parentheses is
positive for $r>1$.

\begin{Proposition}[Failure of the $C^2$ endpoint]\label{prop:C2_endpoint}
For every $\lambda\in(0,1)$ and every $d\geq1$, set
\[
\Phi(x):=\frac{\eta_2(\lambda)}{4\pi^2}\cos(2\pi x_1),
\qquad \alpha=0.
\]
Then $\Phi$ is real analytic and
\[
\|D\Phi\|_{C^1}=\eta_2(\lambda).
\]
The map $F^{\Phi}_{\lambda,0}$ has a unique continuous invariant graph,
which is exact Lagrangian and belongs to $C^{2-\varepsilon}$ for every
$\varepsilon\in(0,1)$, but is not of class $C^2$.
Consequently, the strict inequality in Theorem~\ref{thm:r-map} cannot
be replaced by a non-strict inequality when $r=2$, even for analytic maps.
\end{Proposition}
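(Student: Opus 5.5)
The plan is to reduce to a one-dimensional problem. From there, regularity up to $C^{2-\varepsilon}$ comes from the results already proved. The failure of $C^2$ is to be traced to a resonance between the tangential and normal eigenvalues at an attracting fixed point on the graph.

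\emph{Step 1: existence, Lagrangian property and $C^{2-\varepsilon}$ regularity.} With $c:=\eta_2(\lambda)$ we have
\[
D\Phi(x)=-\tfrac{c}{2\pi}\sin(2\pi x_1)e_1,\qquad D^2\Phi(x)=-c\cos(2\pi x_1)e_1e_1^T,
\]
so $\|D\Phi\|_{C^1}=\max\{c/2\pi,c\}=c$. Since $\eta_2(\lambda)<\eta_1(\lambda)=(1-\sqrt\lambda)^2$, Theorem~\ref{thm:Lagrangian} gives a unique continuous invariant graph, which is $C^1$ and exact Lagrangian and whose projection is the global attractor. Since $\eta_r(\lambda)$ is strictly decreasing in $r>1$, every $r=2-\varepsilon\in(1,2)$ satisfies $\|D\Phi\|_{C^1}=\eta_2(\lambda)<\eta_r(\lambda)$. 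Theorem~\ref{thm:r-map} then gives $\psi\in C^{2-\varepsilon}$.

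\emph{Step 2: reduction and symmetries.} The map splits as a product. In the coordinates $(x_j,y_j)$, $j\ge2$, it is the integrable map $(x_j,y_j)\mapsto(x_j+\lambda y_j,\lambda y_j)$, and in $(x_1,y_1)$ it is a one-dimensional ECS map. By uniqueness, $\psi_j\equiv0$ for $j\ge2$ and $\psi_1=\psi_1(x_1)$, so it suffices to treat $d=1$. Because $\alpha=0$ and $D\Phi$ is odd, the map commutes with $(x,y)\mapsto(-x,-y)$. Uniqueness of the invariant graph then forces $\psi$ to be odd, so $\psi(0)=0$ and $(0,0)$ is a fixed point on the graph. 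The linearization there is
\[
\begin{pmatrix}1-c&\lambda\\-c&\lambda\end{pmatrix},
\]
which has determinant $\lambda$ and trace $1+\lambda-c=\lambda^{1/3}+\lambda^{2/3}$. Its eigenvalues are therefore $\mu_t=\lambda^{1/3}$ and $\mu_n=\lambda^{2/3}=\mu_t^2$. The graph is tangent to the weak direction, since $g'(0)=\mu_t$. The point is thus a resonant attracting node with $\mu_n=\mu_t^2$, which is exactly the endpoint case $q=s^{3}$ of the $C^2$ contraction. The other fixed point $(1/2,0)$ is a saddle, because there the trace is $1+\lambda+c>1+\lambda$.

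\emph{Step 3: local structure at the node and the obstruction.} The map is analytic and odd, so its Poincar\'e--Dulac normal form at $0$ contains only odd monomials. The resonant quadratic term $u^2\partial_v$ is therefore absent, and an analytic change of coordinates $(u,v)$ conjugates the map to $(u,v)\mapsto(\mu u,\mu^2v)$ up to nonresonant higher-order terms, which can also be removed. In these coordinates the $C^1$ invariant curves through $0$ tangent to the slow direction are exactly the curves $v=k_\pm u^2$ on the two sides $u\gtrless0$. The graph is such a curve, and oddness forces $k_-=-k_+$. Its second derivative at $0$ therefore jumps from $2k_+$ to $-2k_+$, and $\psi\in C^2$ holds if and only if $k_+=0$.

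\emph{Step 4: showing $k_+\neq0$ (main obstacle).} Since $\bar\Gamma_\psi$ is the attractor, the branch $x_1\in(0,1/2)$ of the graph is the unstable branch of the saddle $(1/2,0)$ flowing into the node. The claim is that this branch does not arrive along the distinguished analytic slow curve $v=0$. I would argue by contradiction. If $k_+=0$, the graph near $0$ would coincide with the odd analytic slow manifold, and hence be analytic along the whole heteroclinic branch. The graph would then be $C^2$ with $\psi''(0)=0$, so the derivative equation $v_{n+1}=F_2(v_n,g_n)$ from Step~2 of the proof of Theorem~\ref{thm:DSM_Cr_smoothness} would have a fixed point at the borderline $\rho_2=1$. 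To reach a contradiction, I would first try to expand $\psi$ along the orbit converging to $0$, using the third-order data $D^4\Phi(0)=c\,(2\pi)^2\neq0$ together with the global cosine shape, and show that the accumulated forcing in the resonant $u^2$ component cannot vanish. This amounts to computing the sign of a Melnikov-type sum $\sum_n\mu^{-2n}R(g^n(x))$ along the orbit. If that computation is intractable, I would instead derive the contradiction from a family argument: as the amplitude $t$ crosses $\eta_2(\lambda)$, the coefficient $k_+(t)$ must change, because for $t<\eta_2$ (where $\mu_n<\mu_t^2$) the graph is $C^2$, while for $t>\eta_2$ it is not. Continuity of $k_+$ would then be used to exclude $k_+(\eta_2)=0$.
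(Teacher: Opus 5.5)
Your Steps~1 and~2 agree with the paper. The only addition needed is that $g'(0)=\lambda^{1/3}$, rather than the other root $\lambda^{2/3}$, follows from $g'\ge\sqrt\lambda$ in \eqref{eq:ECS_endpoint_base_bound}.

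\textbf{The gap is Step~4, which is the whole content of the proposition.} You never prove that the branch of the graph over $(0,1/2)$ differs from the analytic slow curve; you only list strategies. The Melnikov-type sum is never signed. The family argument cannot work, for three reasons:
\begin{enumerate}
\item For $t<\eta_2(\lambda)$ the graph is $C^2$ whatever $k_+(t)$ is, because the local exponent $\log\mu_n/\log\mu_t$ exceeds $2$. So $C^2$ regularity there says nothing about $k_+$.
\item The non-$C^2$ claim for $t>\eta_2(\lambda)$ is itself unproved.
\item Continuity does not prevent a function from vanishing at an isolated parameter.
\end{enumerate}
Taylor data of $\Phi$ at $0$ cannot help either. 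By your own Step~3 the odd map is analytically linearizable at the node, and there $v=0$ is a perfectly good analytic invariant curve. The obstruction is therefore global: the branch issuing from the saddle $(1/2,0)$ must not land on that curve. A minor point: in linearizing coordinates the $C^1$ invariant curves are $v=u^2\tau(\log|u|)$ with $\tau$ periodic of period $|\log\mu|$, not only parabolas. Your conclusion that $C^2$ forces $v\equiv0$ near $0$ still holds, since $h''(\mu u)=h''(u)$.

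\textbf{The missing idea}, which is how the paper argues, is to drop normal forms and work with $q:=g''$ on $(0,1/2)$. Put $a:=\lambda^{1/3}=g'(0)$. Differentiating $g+\lambda g^{-1}=(1+\lambda)x+\phi$ twice and evaluating along $x_{n+1}=g(x_n)$ gives
\[
q(x_{n+1})=\phi''(x_{n+1})+\frac{\lambda}{g'(x_n)^3}\,q(x_n),\qquad
\phi''(x)=2\pi\eta_2(\lambda)\sin(2\pi x)>0\quad\text{on }(0,1/2).
\]
The contradiction then runs as follows.
\begin{enumerate}
\item If $\psi\in C^2$, oddness gives $q(0)=0$.
\item Since $g(0)=0$, $g(1/2)=1/2$ and $a<1$, some $w\in(0,1/2)$ has $g'(w)>a$. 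The mean value theorem then gives $z\in(0,w)$ with $q(z)>0$.
\item $g$ has no fixed point in $(0,1/2)$, because $\phi<0$ there. Hence the orbit $x_n=g^n(z)$ decreases to $0$ geometrically.
\item Consequently $|g'(x_n)-a|\le\|q\|_{C^0}x_n$ is summable. By the resonance $\lambda=a^3$, the product $\prod_{j<n}\lambda/g'(x_j)^3$ converges to a positive limit.
\item Positivity of the forcing yields $q(x_n)\ge q(z)\prod_{j<n}\lambda/g'(x_j)^3$. So $\liminf_n q(x_n)>0$, contradicting $q(x_n)\to q(0)=0$.
\end{enumerate}
This is exactly the sign computation your Step~4 lacked. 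The sign comes from $\phi''>0$ on all of $(0,1/2)$ together with the global constraint $g(1/2)=1/2$, not from local data at the node.
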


\begin{Remark}[Endpoints of higher regularity]\label{rem:higher_endpoint_counterexamples}
More generally, for every real $r>1$ and every $\lambda\in(0,1)$,
there are periodic potentials $\Phi\in C^{r+1}(\mathbb T^d,\mathbb R)$
with $\|D\Phi\|_{C^1}=\eta_r(\lambda)$ for which
$F^{\Phi}_{\lambda,0}$ has a unique invariant graph belonging to $C^q$
for every $1\leq q<r$, but not to $C^r$.
One construction starts in dimension one with the odd local perturbation
\[
\phi(x)=-\eta_r(\lambda)x+
\frac{c\,x|x|^{r-1}}{\log(e/|x|)},\qquad 0<|x|\ll1,
\qquad 0<c\le\eta_r(\lambda)/(4(r+1)),
\]
and $\phi(0)=0$.
Here the logarithmic correction is $C^r$ at the origin; its
derivatives through order $\lfloor r\rfloor$ vanish there (through
order $r$ when $r$ is an integer).
To describe a periodic extension, write $p_0$ for the local derivative
$\phi'$ and choose the neighborhood so that
$-\eta_r(\lambda)\le p_0\le0$.
Take a smooth even cutoff $0\le\chi\le1$, equal to one near zero,
supported in that neighborhood and satisfying
$\int_{\mathbb T}\chi<1/2$. Set
\[
b:=-\frac{\int_{\mathbb T}\chi p_0}{\int_{\mathbb T}(1-\chi)},
\qquad p:=\chi p_0+(1-\chi)b,\qquad
\phi(x):=\int_0^x p(t)\,dt.
\]
The product $\chi p_0$ is understood to be zero off the neighborhood.
Then $0<b<\eta_r(\lambda)$, $p$ is even with zero average, and
$p(0)=-\eta_r(\lambda)$ while $|p|\le\eta_r(\lambda)$.
Thus the last formula gives an odd periodic $C^r$ extension with
$\|\phi'\|_{C^0}=\eta_r(\lambda)$.
Oddness gives zero average and \(\|\phi\|_{C^0}\le\|\phi'\|_{C^0}/2\), so the full \(C^1\) norm is also \(\eta_r(\lambda)\). Its periodic primitive is of class $C^{r+1}$, and dependence on $x_1$
alone gives the examples in higher dimensions.
The obstruction is the resonance $\lambda=a^{r+1}$, where
$a=\lambda^{1/(r+1)}$ is the tangential multiplier at the origin:
the logarithmic term accumulates along orbits approaching the origin
and prevents $C^r$ regularity.

This construction shows that the $C^1$ threshold in
Theorem~\ref{thm:Lagrangian} does not extend to the corresponding
$C^r$ threshold for $r>1$; Proposition~\ref{prop:C2_endpoint}
provides an analytic example at $r=2$.
\end{Remark}

\begin{proof}[Proof of Proposition~\ref{prop:C2_endpoint}]
We first treat $d=1$.  Write
\[
a:=\lambda^{1/3},\qquad
\eta:=\eta_2(\lambda)=(1-a)(1-a^2),\qquad
\phi(x):=\Phi'(x)=-\frac{\eta}{2\pi}\sin(2\pi x).
\]
Since $\phi'(x)=-\eta\cos(2\pi x)$, the definition of the $C^1$ norm gives
$\|\phi\|_{C^1}=\max\{\eta/(2\pi),\eta\}=\eta$.
Moreover,
\[
(1-\sqrt\lambda)^2-\eta
=(1-a^{3/2})^2-(1-a)(1-a^2)
=a(1-\sqrt a)^2>0.
\]
Thus Theorem~\ref{thm:Lagrangian}
gives a unique $C^1$ invariant graph, denoted by $y=\psi(x)$, and
the induced lift
\[
g(x)=x+\lambda\psi(x)+\phi(x)
\]
is a $C^1$ diffeomorphism satisfying
\begin{equation}\label{eq:C2_endpoint_HM}
g(x)+\lambda g^{-1}(x)=(1+\lambda)x+\phi(x),
\qquad g'(x)\geq\sqrt\lambda>0.
\end{equation}
{The derivative bound follows from \eqref{eq:ECS_endpoint_base_bound}.}
The map $F^{\Phi}_{\lambda,0}$ commutes with $(x,y)\mapsto(-x,-y)$.
The reflected graph $y=-\psi(-x)$ is also a continuous
invariant graph. Uniqueness gives $\psi(-x)=-\psi(x)$, and hence
$g(-x)=-g(x)$. In particular, $g(0)=0$.
Also, $g(x+1)=g(x)+1$ gives
$g(1/2)=g(-1/2)+1=-g(1/2)+1$, so
\begin{equation}\label{eq:C2_endpoint_fixed_points}
g(0)=0,\qquad g(1/2)=1/2.
\end{equation}
Differentiating \eqref{eq:C2_endpoint_HM} at $0$ gives
\[
g'(0)+\frac{\lambda}{g'(0)}
=1+\lambda-\eta=a+a^2.
\]
The two roots are $a$ and $a^2$.
Since $a^2<\sqrt\lambda=a^{3/2}<a$ and
$g'(0)\geq\sqrt\lambda$, necessarily
\begin{equation}\label{eq:C2_endpoint_multiplier}
g'(0)=a<1.
\end{equation}

Suppose, for a contradiction, that $\psi\in C^2$.
Then $g\in C^2$, and $g^{-1}\in C^2$ because $g'\ge\sqrt\lambda>0$.
Put $q:=g''$, a continuous periodic function.
Differentiating the oddness identity twice gives $q(-x)=-q(x)$,
and in particular $q(0)=0$.
On the other hand, \eqref{eq:C2_endpoint_fixed_points} implies
\[
\int_0^{1/2}g'(x)\,dx=\frac12.
\]
As $g'(0)=a<1$, there is a point $w\in(0,1/2)$ with $g'(w)>a$.
The mean value theorem applied to $g'$ on $[0,w]$ gives a point
$z\in(0,w)$ with $q(z)>0$.

There is no fixed point of $g$ in $(0,1/2)$.
Indeed, if $g(x)=x$, invariance of the graph and the definition of $g$
give, respectively,
\[
\psi(x)=\lambda\psi(x)+\phi(x),\qquad
0=\lambda\psi(x)+\phi(x).
\]
Thus $\psi(x)=\phi(x)=0$, whereas $\phi(x)<0$ for $0<x<1/2$.
By \eqref{eq:C2_endpoint_multiplier}, $g(x)<x$ for small positive $x$;
continuity and the absence of other fixed points give
\[
0<g(x)<x<1/2\qquad (0<x<1/2).
\]
Consequently, $x_n:=g^n(z)$ decreases to a limit $\ell\in[0,z]$.
Continuity gives $g(\ell)=\ell$; the absence of fixed points in
$(0,1/2)$ forces $\ell=0$.
Moreover, for any $\tau_0\in(a,1)$, differentiability at $0$ gives
$g(x)\leq\tau_0 x$ for all sufficiently small positive $x$.
There is thus an $N$ such that $x_{N+j}\le\tau_0^j x_N$ for all
$j\ge0$. Hence $\sum_{n\ge0}x_n<\infty$.

Differentiating $g(g^{-1}(x))=x$ once and then twice gives
\[
(g^{-1})'(x)=\frac{1}{g'(g^{-1}(x))},\qquad
(g^{-1})''(x)=-\frac{q(g^{-1}(x))}{g'(g^{-1}(x))^3}.
\]
Twice differentiating \eqref{eq:C2_endpoint_HM} gives
$q(x)-\lambda q(g^{-1}(x))/g'(g^{-1}(x))^3=\phi''(x)$.
Setting $x=x_{n+1}$, so that $g^{-1}(x_{n+1})=x_n$, yields
\begin{equation}\label{eq:C2_endpoint_curvature}
q(x_{n+1})=\phi''(x_{n+1})+
\frac{\lambda}{g'(x_n)^3}q(x_n).
\end{equation}
Here $\phi''(x)=2\pi\eta\sin(2\pi x)>0$ on $(0,1/2)$.
Starting with $q(z)>0$, iteration of
\eqref{eq:C2_endpoint_curvature} gives
\begin{equation}\label{eq:C2_endpoint_product}
q(x_n)\geq q(z)
\prod_{j=0}^{n-1}\frac{\lambda}{g'(x_j)^3}.
\end{equation}
Under the assumed $C^2$ regularity,
\[
|g'(x_j)-a|\leq\|q\|_{C^0}x_j.
\]
The function $\log t$ has derivative at most $1/\sqrt\lambda$
on $[\sqrt\lambda,\infty)$, which contains both $a$ and every $g'(x_j)$.
Since $\lambda=a^3$, the mean value theorem gives
\[
\left|\log\frac{\lambda}{g'(x_j)^3}\right|
=3|\log a-\log g'(x_j)|
\le\frac{3}{\sqrt\lambda}|g'(x_j)-a|
\le\frac{3\|q\|_{C^0}}{\sqrt\lambda}x_j.
\]
Together with $\sum_jx_j<\infty$, this proves
\[
\sum_{j=0}^{\infty}
\left|\log\frac{\lambda}{g'(x_j)^3}\right|<\infty.
\]
Every factor in \eqref{eq:C2_endpoint_product} is positive, and
its partial product is the exponential of the corresponding partial
sum of logarithms. Absolute convergence of that series gives
\[
\prod_{j=0}^{n-1}\frac{\lambda}{g'(x_j)^3}
\longrightarrow
\exp\left(\sum_{j=0}^\infty\log\frac{\lambda}{g'(x_j)^3}\right)>0.
\]  It follows that
\[\liminf_{n\to\infty}q(x_n)>0,\] contradicting the continuity of $q$
and $q(x_n)\to q(0)=0$.
This proves $\psi\notin C^2$.
For every $\varepsilon\in(0,1)$,
$\eta_2(\lambda)<\eta_{2-\varepsilon}(\lambda)$;
Theorem~\ref{thm:r-map} gives
$\psi\in C^{2-\varepsilon}$.

For $d>1$, the chosen potential depends only on $x_1$.
The map is the direct product of the preceding one-dimensional map
and $d-1$ integrable maps $(x_j,y_j)\mapsto(x_j+\lambda y_j,\lambda y_j)$.
Its invariant graph is
\[
\psi_d(x_1,\ldots,x_d)=(\psi(x_1),0,\ldots,0).
\]
Uniqueness and exactness follow from Theorem~\ref{thm:Lagrangian},
and the asserted regularity and failure of $C^2$ follow by restriction
to the first coordinate.
\end{proof}

\subsection{Sharpness of the $C^1$ bound}\label{sec:ecs_sharpness}
The bound in \eqref{critc} concerns both the existence and the regularity
of the invariant graph. These lead to two different sharpness questions.

\paragraph{Regularity.}
Theorem~2 of \cite{HLW26} constructs nondifferentiable invariant graphs
at every prescribed larger norm. Thus \eqref{critc} is optimal as a
uniform guarantee of $C^1$ regularity over the perturbations and
parameters. The vertical drift in that one-dimensional result is removed
by the translation $y\mapsto y-\alpha_2/(1-\lambda)$, with the base drift
adjusted accordingly. Products with integrable maps give the same
obstruction in higher dimensions.

\paragraph{Existence as $\lambda\to1^-$.}
For persistence of Lipschitz Lagrangian graphs, one can also ask how the
allowable perturbation size scales with the dissipation. More precisely,
if $\|D\Phi\|_{C^1}=O((1-\lambda)^\gamma)$, which exponents $\gamma$
can give a uniform persistence statement? The following result is proved
in \cite{SW26}.

\begin{Proposition}\label{pro:trigonometric polynomials}
In dimension $d=1$, for every $\lambda\in(0,1)$ there exists a sequence
of trigonometric polynomials $\{\Phi_n^\lambda\}_{n\in\mathbb N}$ such
that the maps $F^{\Phi_n^\lambda}_{\lambda,\alpha}$ have no continuous
invariant graphs for any $\alpha\in\mathbb R$. Moreover, for every
$n\in\mathbb N$,
\[
\|D\Phi_n^\lambda\|_{C^1}\le1\quad(0<\lambda<1),\qquad
\|D\Phi_n^\lambda\|_{C^1}=O(1-\lambda)\quad(\lambda\to1^-).
\]
\end{Proposition}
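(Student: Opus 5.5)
The plan is to argue by contradiction through the Herman--Mather formula of Proposition~\ref{pro:Herman}, specialised to the ECS model as in Step~1 of the proof of Theorem~\ref{thm:Lagrangian}. Write $\phi=\Phi'$. If $F^{\Phi}_{\lambda,\alpha}$ had a continuous invariant graph, the induced lift $g$ would be an increasing homeomorphism solving
\[
g(x)+\lambda g^{-1}(x)=(1+\lambda)x+(1-\lambda)\alpha+\phi(x).
\]
In $d=1$ every continuous graph is Lagrangian, so by \cite[Proposition~3.2]{SW26} the graph $\psi$ is Lipschitz, and hence so are $g$ and $g^{-1}$. Differentiating almost everywhere and following an orbit $x_{k+1}=g(x_k)$ with $u_k:=g'(x_k)>0$ gives the continued-fraction recurrence
\[
u_{k}=1+\lambda+\phi'(x_{k})-\frac{\lambda}{u_{k-1}} .
\]
Some care is needed to make this hold along a.e.\ orbit, for instance by working with Dini derivatives or one-sided Lipschitz bounds. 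The map $u\mapsto 1+\lambda-\delta-\lambda/u$ has a positive fixed point iff $1+\lambda-\delta\ge 2\sqrt\lambda$, that is iff $\delta\le(1-\sqrt\lambda)^2=O((1-\lambda)^2)$. This gap between $(1-\lambda)^2$ and $(1-\lambda)$ is the mechanism I intend to exploit.

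The construction I would try is a trigonometric polynomial $\Phi_n^\lambda$ whose derivative $\phi'$ equals roughly $-c_n(1-\lambda)$ on an arc $J$ of fixed length, with $c_n$ large. The compensating positive part of $\phi'$ must be placed on a short arc, so that $\phi$ has zero average. This gives $\|D\Phi_n^\lambda\|_{C^1}=O(1-\lambda)$, and I would normalise so that the norm is $\le1$ for all $\lambda$. On $J$ each step of the recurrence lowers $u$ by at least a quantity comparable to $c_n(1-\lambda)-(1-\sqrt\lambda)^2\gtrsim(1-\lambda)$. Indeed, once $u$ is below the would-be fixed point, the defect only grows. Hence after $N\approx C/(c_n(1-\lambda))$ consecutive steps inside $J$, $u_k$ would become nonpositive. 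That contradicts $u_k>0$, which holds because $g$ is orientation preserving with $g'$ bounded below on the relevant set.

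The main obstacle is uniformity in $\alpha$: I must guarantee orbits that remain in $J$ for about $N$ steps. Three steps address this.
\begin{itemize}
\item First, $\alpha$ matters only modulo $1$ on the cylinder, since $x$ is taken mod $1$. So it suffices to treat $\alpha\in[0,1)$, and the displacement $g-\mathrm{Id}=\alpha+\lambda\psi+\phi$ is then bounded.
\item Second, when the rotation number $\rho(g)$ is close to an integer, orbits move slowly. Since $\rho$ varies continuously with $\alpha$, the delicate case is $\rho$ bounded away from integers.
\item Third, to cover all rotation numbers I would replace the single orbit by the iterate $g^q$ for a suitable denominator $q$, or use the index $n$ to introduce frequency $n$ in $\Phi_n^\lambda$, so that many short arcs $J$ are visited consecutively for every possible drift. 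The recurrence would then be run along the iterates of $g^q$.
\end{itemize}
I expect this combinatorial covering argument, which forces long runs in the region where $\phi'<-(1-\sqrt\lambda)^2$ uniformly in $\alpha$, to be the hardest step. If it fails, I would fall back on the rational case: for $\rho(g)=p/q$ a periodic orbit $\{x_k\}$ lies on the graph, and its multiplier $\prod_k g'(x_k)$ must satisfy the characteristic equation of $DF^q$. Placing the orbit where $\phi'$ is negative makes that equation have complex roots, which is impossible for a real invariant curve. The irrational case would then be obtained from the rational one by a limiting argument.
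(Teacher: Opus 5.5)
\emph{Overall verdict.} The paper gives no proof of this proposition; it quotes it from \cite{SW26}. Your sketch therefore has to stand on its own, and as written it has a genuine gap at exactly the step you postpone.

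\emph{The local mechanism is sound.} Differentiating the Herman--Mather identity $g+\lambda g^{-1}=(1+\lambda)\mathrm{Id}+(1-\lambda)\alpha+\phi$ gives your recurrence along almost every orbit, since a bi-Lipschitz $g$ maps null sets to null sets. On a stretch where $\phi'\le-\delta$ one gets $u_k-u_{k-1}\le-(\delta-(1-\sqrt\lambda)^2)$.

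\emph{The missing step is the whole statement.} You need, for \emph{every} $\alpha$, an orbit of the unknown lift $g$ that spends $N\approx C/(c_n(1-\lambda))$ consecutive steps in $J$. None of the devices you list supplies this.
\begin{itemize}
\item Since $\int_0^1\phi'=0$ and $\phi'\le-\delta$ on $J$, one has $|\{\phi'>0\}|\ge\delta|J|/\|\phi'\|_{C^0}$. With $\delta\asymp c_n(1-\lambda)$ and $\|\phi'\|_{C^0}\le K_n(1-\lambda)$, this bound is independent of $\lambda$, so your compensating arc cannot be short.
\item If, for some drift, $g$ is close to an irrational rotation, its orbits meet that arc every $T$ steps with $T$ independent of $\lambda$. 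There the recurrence pushes $u_k$ back up, while $N\to\infty$ as $\lambda\to1^-$. Nothing in the proposal rules this out.
\item Passing to $g^q$ only regroups the recurrence, because the Herman--Mather identity relates $g$ to $g^{-1}$ only. The regrouped recurrence still samples $\phi'$ at every point of the $g$-orbit.
\item The index $n$ cannot be tuned to the drift, since each single $\Phi_n^\lambda$ must exclude graphs for all $\alpha$ at once.
\item The continuity of $\rho$ in $\alpha$ that you invoke presupposes a continuous family of invariant graphs, which is exactly what is being excluded.
\end{itemize}
What is missing is dynamical information about $g$, not a covering property of $\phi$.

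\emph{The fallback has two further gaps.} First, the periodic orbits carried by the graph are not at your disposal. You would have to prove that, for every $\alpha$, some periodic orbit with non-real multipliers exists \emph{and} lies on the graph, and the proposal does neither. Second, the passage from rational to irrational rotation numbers runs the wrong way. Compactness, as in Step~2 of the proof of Proposition~\ref{prop:irrational_Lip_not_C1}, only shows that limits of invariant graphs are invariant graphs. To exclude a graph with irrational rotation number at $\alpha_0$ via the rational case, you would need graphs at nearby drifts, i.e.\ persistence. Persistence needs normal hyperbolicity, which is unavailable for a Lipschitz non-$C^1$ graph; this is why Step~3 of that proof assumes $C^1$.

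\emph{A possibly more promising direction.} Arbitrarily long runs can come from orbits that do not move at all. The cylinder map has fixed points $(x,m-\alpha)$ whenever $\phi(x)=(1-\lambda)(m-\alpha)$. These exist for every $\alpha$ as soon as $\max\phi-\min\phi\ge1-\lambda$, which is the scale $1-\lambda$ appearing in the statement. At a fixed point $x^*$ of $g$, writing your recurrence with Dini quotients and using the bi-Lipschitz bounds gives $\phi'(x^*)\ge-(1-\sqrt\lambda)^2$. You would still have to show that, for every $\alpha$, the graph must pass through such a fixed point lying on a steep decreasing branch of $\phi$.
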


Thus perturbations of order $O(1-\lambda)$ can destroy all invariant
graphs. This rules out a universal persistence statement based solely
on that order of magnitude, without an appropriate size restriction.
By contrast, Theorem~\ref{thm:Lagrangian} guarantees persistence at the
explicit size
\[
(1-\sqrt\lambda)^2\sim\frac{(1-\lambda)^2}{4}
\qquad(\lambda\to1^-).
\]

{\color{black}
\begin{proof}[Proof of Theorem~\ref{thm:main_lagrangian}]
Theorem~\ref{thm:Lagrangian} proves the case $r=1$, including equality,
and Theorem~\ref{thm:r-map} gives the $C^r$ exact Lagrangian graph for
$r>1$. Uniqueness among continuous periodic invariant graphs and global
attraction follow from Lemma~\ref{lem:Lipschitz-I-G} and its proof.
\end{proof}
}

\section{Regularity in dimension one}\label{sec:regularity}

{\color{black}
Theorem~\ref{thm:main_regularity} combines regularity of $C^1$ invariant
graphs with a counterexample showing that the $C^1$ hypothesis cannot
be replaced by Lipschitz continuity. We prove part~\textup{(i)} in
Theorem~\ref{thm:C_infty_smoothness} using derivative estimates and
fiber contraction, and construct the analytic counterexample for
part~\textup{(ii)} in Proposition~\ref{prop:irrational_Lip_not_C1}.
The proof of Theorem~\ref{thm:main_regularity} concludes this section.
\par}
We consider the map \eqref{eq:dissipative_map} in dimension one,
\begin{equation}\label{eq:1dDTM}
 F_{\lambda,\alpha}^{\phi}(x,y)
 =\bigl(x+\alpha_1+\lambda_1y+\phi_1(x),\,
        \alpha_2+\lambda_2y+\phi_2(x)\bigr),
\end{equation}
where $\lambda_1\ne0$, $\lambda_2\in(0,1)$,
$\alpha=(\alpha_1,\alpha_2)\in\mathbb R^2$, and
$\phi_1,\phi_2\in C^k(\mathbb T,\mathbb R)$ with $k\ge2$.
By \eqref{dissp} and periodicity (Remark~\ref{rem:det}), there are
constants $\delta>0$ and $\lambda\in(0,1)$ such that
\begin{equation}\label{eq:one_dimensional_dissipation}
 0<\delta\le\det DF_{\lambda,\alpha}^{\phi}(x,y)
 =\lambda_2-D(\lambda_1\phi_2-\lambda_2\phi_1)(x)
 \le\lambda<1.
\end{equation}
In fact, periodicity gives
$\lambda_2=\int_0^1\det DF_{\lambda,\alpha}^{\phi}(x,y)\,dx\in(0,1)$,
so this restriction on $\lambda_2$ follows from dissipation.
The invariant graph then has the same differentiability as the map,
provided that it is $C^1$ and its induced dynamics has irrational
rotation number.

\begin{Theorem}\label{thm:C_infty_smoothness}
Let $k\ge2$ be an integer. Suppose that the $C^k$ map
$F_{\lambda,\alpha}^{\phi}$ in \eqref{eq:1dDTM} satisfies
\eqref{eq:one_dimensional_dissipation} and preserves a graph
$\Gamma=\{(x,\psi(x)):x\in\mathbb R\}$ with
$\psi\in C^1(\mathbb T,\mathbb R)$. If the induced circle map has
irrational rotation number, then $\psi\in C^k(\mathbb T,\mathbb R)$.
In particular, if $F_{\lambda,\alpha}^{\phi}$ is $C^\infty$, then
$\psi\in C^\infty(\mathbb T,\mathbb R)$.
\end{Theorem}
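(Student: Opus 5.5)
The plan is to bootstrap regularity one derivative at a time. Each step uses the $r$-fiber contraction section theorem of \cite{HPS77} (Definition~\ref{def:r-fiber contraction}), with the derivative bounds it needs supplied by unique ergodicity. Let $g(x)=x+\alpha_1+\lambda_1\psi(x)+\phi_1(x)$ be the induced lift; it is $C^1$. Since $\Gamma$ is a $C^1$ invariant curve and $F$ is a diffeomorphism, $g$ is a $C^1$ circle diffeomorphism. Differentiating the invariance relation $\psi(g(x))=\alpha_2+\lambda_2\psi(x)+\phi_2(x)$ shows that the slope $s=\psi'$ is an invariant section of the projectivized derivative cocycle. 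Explicitly, $s$ is a fixed point of
\[
s(g(x))=G(x,s(x)):=\frac{\lambda_2 s(x)+\phi_2'(x)}{1+\phi_1'(x)+\lambda_1 s(x)},
\]
where the denominator is $g'(x)>0$ along the graph.

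The first key estimate concerns this fiber map near the invariant section. Its fiber derivative at $s(x)$ equals $\det DF(x,\psi(x))/g'(x)^2$. By \eqref{eq:one_dimensional_dissipation} this is at most $\lambda/g'(x)^2$. The quantity $g'(x)^2$ need not exceed one pointwise, so I will pass to an iterate $g^N$. Since $\rho(g)\notin\mathbb Q$, $\bar g$ is uniquely ergodic \cite{Her79}, and its unique invariant measure $\mu$ satisfies $\int\log g'\,d\mu=0$, because otherwise $(g^n)'$ would grow or decay exponentially uniformly, which is incompatible with $\int_0^1 (g^n)'=1$. The uniform ergodic theorem \cite[Theorem~6.19]{Wal82} then gives, for every $\varepsilon>0$, an $N$ such that $e^{-\varepsilon N}\le (g^{N})'(x)\le e^{\varepsilon N}$ uniformly in $x$, and likewise for $(g^{-N})'$. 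The fiber contraction of the $N$-th iterate of the cocycle near the section is then bounded by $\lambda^N e^{2\varepsilon N}$. Choosing $\varepsilon$ with $\lambda e^{(2+r)\varepsilon}<1$ gives $K\kappa^r<1$ for the $N$-th iterate, where $\kappa=\sup|D\bar h^{-1}|\le e^{\varepsilon N}$ and $\bar h=\bar g^N$.

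The fiber map is not globally contracting on all of $\mathbb R$, so I will restrict it to a small closed tubular neighborhood $\{|s-\psi'(x)|\le\delta\}$. There the fiber Lipschitz constant of the $N$-th iterate stays below $\lambda^Ne^{3\varepsilon N}$, and the map sends the neighborhood into itself; this is a complete bundle, as \cite[Theorem~3.5]{HPS77} allows. Uniqueness of the invariant section inside the neighborhood identifies it with $\psi'$.

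The induction then runs as follows. Suppose $\psi\in C^j$ for some $1\le j<k$. Then $g$ and $g^{-1}$ are $C^j$, and the fiber map $G$ is $C^{j}$ in $(x,s)$ (it involves $\phi_i'\in C^{k-1}$ and is $C^\infty$ in $s$). The base map $\bar g^N$ is a $C^j$ diffeomorphism. The section theorem, applied with exponent $r=j$, then gives $\psi'\in C^j$, hence $\psi\in C^{j+1}$. The $\varepsilon$ above must be chosen for the largest exponent needed, namely $r=k-1$; this is possible because the ergodic bound holds for every $\varepsilon$. Iterating up to $j=k-1$ yields $\psi\in C^k$. For $k=\infty$, apply the result for each finite $k$.

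The main obstacle is to make the section theorem applicable to an iterate. This requires three things. First, I must check that the $N$-th iterate of the fiber map is a genuine fiber contraction in a uniform neighborhood of $\psi'$ rather than only along the section, which needs continuity of the fiber derivative together with a careful choice of $\delta$. Second, I must justify the zero mean of $\log g'$. Third, I must keep track of the fact that the base map's regularity is only $C^j$ at each stage, which is why the bootstrap gains exactly one derivative per step.
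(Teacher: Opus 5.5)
Your proposal is correct and follows the paper's proof essentially step for step. The shared steps are the slope equation with fiber derivative $J/(g')^2$ along the section, unique ergodicity plus the uniform ergodic theorem giving subexponential bounds on $(g^{\pm n})'$, passage to an iterate $\bar g^N$ with $\varepsilon$ chosen against the top exponent $k-1$, localization near the invariant slope section, and a bootstrap gaining one derivative per application of the $C^r$ section theorem.

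The only technical difference is the localization. Your tube $\{|s-\psi'(x)|\le\delta\}$ has boundary only of class $C^{j-1}$ (merely $C^0$ at the first step). To stay in the smooth-bundle setting of the section theorem, you should center it at a smooth approximation $p_0$ of $\psi'$ instead; this still gives a forward-invariant tube. The paper uses such a $p_0$ too, together with a cutoff, to build a global $j$-fiber contraction on $\mathbb T\times\mathbb R$.
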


We prove the theorem by applying the $C^r$ section theorem to the
slope equation. Unique ergodicity gives subexponential derivative
bounds, and dissipation gives the required fiber contraction.
Proposition~\ref{prop:irrational_Lip_not_C1} below shows why the
$C^1$ hypothesis is needed.

\subsection{Derivative estimates for the circle diffeomorphism}\label{sec:circle_derivatives}

Consider the map $F_{\lambda,\alpha}^{\phi}$ in \eqref{eq:1dDTM}
under the hypotheses of Theorem~\ref{thm:C_infty_smoothness}, first
with finite $k\ge2$. Let $\psi\in C^1(\mathbb T,\mathbb R)$ define
an invariant graph. Its induced base lift is

	\begin{equation}\label{eq:circle_homeomorphism}
		g(x) = x + \alpha_1 + \lambda_1 \psi(x) + \phi_1(x),
	\end{equation}
	{and invariance gives}
	\begin{equation}\label{eq:graph_transform}
		\psi(g(x)) = \alpha_2 + \lambda_2 \psi(x) + \phi_2(x).
	\end{equation}
	
	\begin{Lemma}\label{lem:g_circle_diffeo}
		The map $g$ defined in \eqref{eq:circle_homeomorphism} is a degree-one lift of an orientation-preserving $C^1$ circle diffeomorphism $\bar{g} \colon \mathbb{T} \to \mathbb{T}$ satisfying $D\bar{g}(\theta) > 0$ for all $\theta \in \mathbb{T}$.
	\end{Lemma}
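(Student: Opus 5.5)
The plan is to get every property of $g$ from the invariance relation and from the fact that $F_{\lambda,\alpha}^{\phi}$ is a global diffeomorphism, in the same way as in the proof of Proposition~\ref{pro:Herman}.

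\emph{Regularity and degree one.} Since $\psi,\phi_1\in C^1$ are $1$-periodic, formula \eqref{eq:circle_homeomorphism} shows that $g$ is $C^1$ and that $g(x+1)=g(x)+1$.

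\emph{Homeomorphism.} As in Proposition~\ref{pro:Herman}, $g=\pi\circ F|_\Gamma\circ\iota$ is a composition of homeomorphisms. Here $\iota(x)=(x,\psi(x))$ and $\pi$ is the first projection. The restriction $F|_\Gamma$ is a homeomorphism of $\Gamma$ because $F$ is a global diffeomorphism and $F(\Gamma)=\Gamma$. Hence $g$ is a homeomorphism of $\mathbb R$ commuting with integer translations, and it descends to a circle homeomorphism $\bar g$ of degree one. Being monotone with $g(x+1)=g(x)+1$, it is increasing, so $\bar g$ preserves orientation and $g'\ge0$.

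\emph{Strict positivity of the derivative.} This is the main point, and the plan is to use the determinant of $DF$ along the graph. Differentiate \eqref{eq:circle_homeomorphism} and \eqref{eq:graph_transform}. The tangent vector $(1,\psi'(x))$ is mapped by $DF(x,\psi(x))$ to
\[
g'(x)\,\bigl(1,\psi'(g(x))\bigr),
\]
since $DF\,D\iota=D\iota(g(x))\,g'(x)$. So if $g'(x_0)=0$ at some point, then $DF(x_0,\psi(x_0))$ annihilates the nonzero vector $(1,\psi'(x_0))$. This contradicts $\det DF\ge\delta>0$ from \eqref{eq:one_dimensional_dissipation}. Therefore $g'>0$ everywhere.

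\emph{Conclusion.} Since $g'>0$, the inverse function theorem gives $g^{-1}\in C^1$. Hence $\bar g$ is an orientation-preserving $C^1$ diffeomorphism with $D\bar g>0$, and by compactness $D\bar g$ is bounded below by a positive constant.

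I do not expect a real obstacle. The only subtle step is excluding critical points of $g$, and the chain-rule identity above together with the nondegeneracy of $DF$ settles it.
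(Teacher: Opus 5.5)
Your proof is correct and follows the paper's argument. The homeomorphism and degree-one properties come from the construction in Proposition~\ref{pro:Herman}, and critical points of $g$ are ruled out by the nondegeneracy of $DF$ along the graph. The only difference is presentation: the paper writes out the factorization $\det DF(x,\psi(x))=Dg(x)\bigl(\lambda_2-\lambda_1 D\psi(g(x))\bigr)$, whereas you note that $DF(x,\psi(x))$ sends $(1,\psi'(x))$ to $g'(x)\bigl(1,\psi'(g(x))\bigr)$, which is the same identity in coordinate-free form.
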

	
\begin{proof}

By Proposition~\ref{pro:Herman}, $g$ is an increasing degree-one
homeomorphism. Equation~\eqref{eq:circle_homeomorphism} gives
$g\in\mathrm{Id}+C^1(\mathbb T,\mathbb R)$, so $Dg\ge0$.
Set $p=D\psi$. Differentiating \eqref{eq:circle_homeomorphism} and
\eqref{eq:graph_transform} yields
\[
 Dg(x)=1+\lambda_1p(x)+D\phi_1(x),\qquad
 p(g(x))Dg(x)=\lambda_2p(x)+D\phi_2(x).
\]
Consequently,
\begin{align*}
 \det DF_{\lambda,\alpha}^{\phi}(x,\psi(x))
 &=\lambda_2(1+D\phi_1(x))-\lambda_1D\phi_2(x)\\
 &=Dg(x)\bigl(\lambda_2-\lambda_1p(g(x))\bigr).
\end{align*}
The determinant is at least $\delta>0$, so $Dg$ cannot vanish.
Thus $\bar g$ is a $C^1$ circle diffeomorphism.

\end{proof}

Since $\bar g$ has irrational rotation number, it is uniquely ergodic;
write $\mu$ for its invariant probability measure. Applying the uniform
ergodic theorem to the continuous function $\log D\bar g$ gives
\[
 \frac1n\log D(\bar g^n)(\theta)
 =\frac1n\sum_{i=0}^{n-1}\log D\bar g(\bar g^i(\theta))
 \longrightarrow \chi:=\int_{\mathbb T}\log D\bar g\,d\mu
\]
uniformly in $\theta$. Put $P_n(\theta):=D(\bar g^n)(\theta)$
for $n\ge0$, so $P_0=1$. The degree-one property gives
$\int_{\mathbb T}P_n(\theta)\,d\theta=1$.
If $\chi>0$, uniform convergence would imply
$P_n\ge e^{\chi n/2}>1$ for all sufficiently large $n$; if $\chi<0$,
it would imply $P_n\le e^{\chi n/2}<1$. Both conclusions contradict
this integral identity. Hence $\chi=0$, and for every $\varepsilon>0$
there is $N_0(\varepsilon)\ge1$ such that, for all $n\ge N_0(\varepsilon)$
and $\theta\in\mathbb T$,
\begin{equation}\label{eq:derivative_exponential_bound}
 e^{-\varepsilon n}\le P_n(\theta)\le e^{\varepsilon n}.
\end{equation}

	\subsection{Construction of the local fiber map}

The derivative $p=D\psi$ descends to
$\bar p=D\bar\psi\in C^0(\mathbb T,\mathbb R)$.
Differentiating \eqref{eq:graph_transform} gives
\begin{equation}\label{eq:p_recursion_new}
 \bar p(\bar g(\theta))=\bar G(\theta,\bar p(\theta)),
\end{equation}
where $\bar G$ is defined in \eqref{eq:Phi_definition_new} below.
We will apply the following form of the $C^r$ section theorem
\cite[pp.~25--35]{HPS77}.

	\begin{Lemma}\label{lem:C^r section}
		Let $\bar{\mathcal{F}}(\theta,s) = \bigl(\bar{h}(\theta), \bar{G}(\theta,s)\bigr)$ be an $r$-fiber contraction on the trivial bundle $\mathbb{T} \times \mathbb{R}$ over the circle in the sense of Definition~\ref{def:r-fiber contraction}. Then there exists a unique continuous invariant section $\bar{p} \colon \mathbb{T} \to \mathbb{R}$ satisfying $\bar{p}(\bar{h}(\theta)) = \bar{G}(\theta, \bar{p}(\theta))$, and this section is of class $C^r$.
	\end{Lemma}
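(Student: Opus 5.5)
The plan is to mirror the strategy of Section~\ref{sec:fixed_point_regularity}: obtain the section as the fixed point of a graph transform in $C^0$, then recover its derivatives as limits of the differentiated iterates, each governed by a fiber contraction whose rate is controlled by $K\kappa^j$.

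\emph{Step 1: existence and uniqueness.} On the Banach space $C^0(\mathbb T,\mathbb R)$ with the sup norm, define the graph transform
\[
(\Gamma p)(\theta):=\bar G\bigl(\bar h^{-1}(\theta),\,p(\bar h^{-1}(\theta))\bigr).
\]
Since $\bar h$ is a homeomorphism and $\bar G$ is continuous, $\Gamma p$ is continuous. Condition (ii) of Definition~\ref{def:r-fiber contraction} gives $\|\Gamma p-\Gamma p'\|_{C^0}\le K\|p-p'\|_{C^0}$. The Banach fixed-point theorem therefore yields a unique continuous invariant section $\bar p$. Moreover, the iterates $p_{n+1}=\Gamma p_n$ converge to $\bar p$ uniformly from any start; we take $p_0=0$, which is smooth.

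\emph{Step 2: integer regularity by induction.} Fix $1\le j\le\lfloor r\rfloor$ and assume that $p_n\to\bar p$ in $C^{j-1}$. Each $p_n$ is $C^r$, because $\bar h^{-1}$ and $\bar G$ are. Differentiating $p_{n+1}\circ\bar h=\bar G(\cdot,p_n)$ $j$ times with Fa\`a di Bruno's formula gives a recursion of the form
\[
D^jp_{n+1}(\theta)=\partial_s\bar G\bigl(\bar h^{-1}\theta,p_n(\bar h^{-1}\theta)\bigr)\,\bigl(D\bar h^{-1}(\theta)\bigr)^{j}\,D^jp_n(\bar h^{-1}\theta)+\mathcal R_{j-1}(p_n)(\theta),
\]
where the remainder $\mathcal R_{j-1}$ involves only derivatives of $p_n$ of order at most $j-1$ and derivatives of $\bar G$ and $\bar h^{-1}$ up to order $j$.

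The leading coefficient is bounded by $K\kappa^j$, since $|\partial_s\bar G|\le K$ by the Lipschitz hypothesis. Because $\bar h$ is a degree-one circle diffeomorphism, $\int_{\mathbb T}D\bar h^{-1}=1$, so $\kappa\ge1$. Hence $K\kappa^j\le K\kappa^r<1$. The operator $\mathcal F(v,p):=$ (the right-hand side with $D^jp_n$ replaced by $v$) is then a uniform contraction in $v\in C^0$. It is continuous in the parameter $p$ at $\bar p$ for the $C^{j-1}$ topology, by uniform continuity of the coefficients. Lemma~\ref{lem:convergence_of_derivatives} gives $D^jp_n\to v_j$ uniformly. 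The integral identity used in the base case of Theorem~\ref{thm:DSM_Cr_smoothness} then shows $\bar p\in C^j$ with $D^j\bar p=v_j$, which closes the induction.

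\emph{Step 3: fractional part.} If $r=k+\beta$ with $\beta\in(0,1)$, set $H_n:=[D^kp_n]_\beta$. As in Step~3 of the proof of Theorem~\ref{thm:DSM_Cr_smoothness}, split the difference $D^kp_{n+1}(x)-D^kp_{n+1}(y)$ into the leading term and coefficient differences. Using $\|\bar h^{-1}\|_{\mathrm{Lip}}\le\kappa$, this yields
\[
H_{n+1}\le K\kappa^{k+\beta}H_n+C_H,
\]
where $C_H$ is uniform in $n$ thanks to the $C^k$ convergence already proved and the $C^{k,\beta}$ regularity of the data. Since $K\kappa^r<1$, the seminorms $H_n$ stay bounded, and passing to the limit gives $[D^k\bar p]_\beta<\infty$.

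The main obstacle is Step~2. One must verify the parameter-continuity hypothesis of Lemma~\ref{lem:convergence_of_derivatives}: the remainder $\mathcal R_{j-1}(p_n)$ must converge uniformly, which requires bookkeeping of the Fa\`a di Bruno terms and uses the inductive $C^{j-1}$ convergence. One must also check that $K\kappa^j<1$ holds for every intermediate order $j$, which is where the observation $\kappa\ge1$ is used.
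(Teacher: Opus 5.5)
Your proof is correct, but it does not follow the paper. The paper gives no proof of this lemma: it cites the $C^r$ section theorem of Hirsch--Pugh--Shub \cite[Theorem~3.5, pp.~25--35]{HPS77}, and the lemma is the trivial-line-bundle case of that theorem.

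You instead give a self-contained proof built from the paper's own tools in Section~\ref{sec:fixed_point_regularity}:
\begin{itemize}
\item The graph transform is a $K$-contraction on all of $C^0(\mathbb T,\mathbb R)$. This works because Definition~\ref{def:r-fiber contraction} supplies a global fiber Lipschitz constant $K$.
\item The $j$-th derivatives of the iterates satisfy an affine recursion whose linear part has norm at most $K\kappa^j<1$. Hence Lemma~\ref{lem:convergence_of_derivatives} applies inductively, with parameter space $C^{j-1}$.
\end{itemize}
In substance this is the HPS mechanism, a fiber contraction on the bundle of derivatives, written out in coordinates. Your route has two advantages. It makes the lemma independent of the external reference. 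It also shows that only the restriction of $\bar G$ and its derivatives to a compact tube $\mathbb T\times[-R,R]$ matters, because $p_n\to\bar p$ uniformly. That compactness is exactly what justifies the uniform continuity you use for the parameter-continuity hypothesis, even though $\mathbb T\times\mathbb R$ is not compact.

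The cited theorem is more general. It allows nontrivial bundles over higher-dimensional compact bases. It also needs only the pointwise conditions $\sup_x k_x<1$ and $\sup_x k_x\kappa_x^r<1$ of \eqref{kalpha}, not a product of two separate suprema.

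Two minor points:
\begin{itemize}
\item Your fractional Step~3 is unnecessary here, since Definition~\ref{def:r-fiber contraction} defines $r$-fiber contractions only for integer $r$.
\item The observation $\kappa\ge1$ is not needed. Since $K<1$, one already has $K\kappa^j\le\max\{K,K\kappa^r\}<1$ for $1\le j\le r$. In any case $\kappa\ge1$ holds for every circle diffeomorphism, orientation-reversing or not, because $\int_{\mathbb T}|D\bar h^{-1}|=1$; the degree-one assumption plays no role.
\end{itemize}
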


	{Define the slope map}
	\begin{equation}\label{eq:Phi_definition_new}
		\bar{G}(\theta, s) = \frac{\lambda_2 s + D\phi_2(\theta)}{1 + \lambda_1 s + D\phi_1(\theta)}.
	\end{equation}
	Since $D\bar{g}(\theta) > 0$, the denominator is strictly positive on the compact invariant slope section $\bar{\Gamma}_p = \{(\theta, \bar{p}(\theta)) \mid \theta \in \mathbb{T}\}$. There exists an open neighborhood $V$ of $\bar{\Gamma}_p$ in $\mathbb{T} \times \mathbb{R}$ on which the local fiber-preserving map $\bar{\mathcal{F}}(\theta, s) = (\bar{g}(\theta), \bar{G}(\theta, s))$ is well-defined and of class $C^1$.

The determinant along the graph is the periodic function
\[
 J(\theta):=\lambda_2(1+D\phi_1(\theta))-\lambda_1D\phi_2(\theta).
\]

	By the dissipative condition, $0 < J(\theta) \le \lambda < 1$. Differentiating $\bar{G}$ with respect to $s$ yields:
	\begin{equation}
		\partial_s \bar{G}(\theta,s) = \frac{\lambda_2(1+D\phi_1(\theta)) - \lambda_1 D\phi_2(\theta)}{\big(1+\lambda_1 s + D\phi_1(\theta)\big)^2} = \frac{J(\theta)}{\big(1+\lambda_1 s + D\phi_1(\theta)\big)^2}.
	\end{equation}
	{Let $\bar G_n$ be the fiber component of $\bar{\mathcal F}^n$
on a neighborhood of $\bar\Gamma_p$ where this iterate is defined.
On the invariant section, the chain rule gives}
	\begin{equation}
		A_n(\theta) := \partial_s \bar{G}_n(\theta, \bar{p}(\theta)) = \frac{\prod_{i=0}^{n-1} J(\bar{g}^i(\theta))}{P_n(\theta)^2} \le \frac{\lambda^n}{P_n(\theta)^2}.
	\end{equation}
	
	\subsection{Proof of Theorem~\ref{thm:C_infty_smoothness}}
	We complete the proof by induction on the regularity of $\psi$. Assume that $\psi \in C^j(\mathbb{T}, \mathbb{R})$ for some $1 \le j \le k-1$. We prove that $\psi \in C^{j+1}(\mathbb{T}, \mathbb{R})$, which is equivalent to showing that the slope section $\bar{p}$ is of class $C^j$.
	
	Under the induction hypothesis, $\bar g$ is a $C^j$ diffeomorphism,
and $\bar G|_V$ is of class $C^j$. Let
$m_n:=\inf_{\theta\in\mathbb T}P_n(\theta)$. The inverse derivative identity gives
\[
 D(\bar g^{-n})(\theta)
 =\frac{1}{P_n(\bar g^{-n}(\theta))},\qquad
 \kappa_n:=\sup_{\theta\in\mathbb T}|D\bar g^{-n}(\theta)|=m_n^{-1}.
\]
Using \eqref{eq:derivative_exponential_bound}, we obtain
	\[
	\left( \sup_{\theta \in \mathbb{T}} A_n(\theta) \right) \kappa_n^j \le \frac{\lambda^n}{m_n^2} \frac{1}{m_n^j} = \frac{\lambda^n}{m_n^{j+2}} \le \lambda^n e^{(j+2)\varepsilon n}.
	\]
	Since $\lambda \in (0,1)$, we choose $\varepsilon \in \big(0, \frac{-\log \lambda}{k+1}\big)$, so that $q := \lambda e^{(k+1)\varepsilon} < 1$. For this $\varepsilon$, choose an integer $N \ge N_0(\varepsilon)$. By the degree-one identity applied to the $N$-th iterate, $\int_{\mathbb{T}} P_N(\theta)\,d\theta = 1$, which implies $m_N = \inf_{\theta \in \mathbb{T}} P_N(\theta) \le 1$, and consequently $\kappa_N = m_N^{-1} \ge 1$. Then, on the invariant section $\bar{\Gamma}_p$, for the induction index $j \le k-1$, we have:
	\begin{equation}
		\left( \sup_{\theta \in \mathbb{T}} A_N(\theta) \right) \kappa_N^j \le \lambda^N e^{(j+2)\varepsilon N} \le q^N < 1.
	\end{equation}
	
	Define $V_N := \big\{ z \in V \mid \bar{\mathcal{F}}^{\ell}(z) \in V \text{ for every } 0 \le \ell \le N-1 \big\}$. Since $\bar{\Gamma}_p$ is compact and invariant, $V_N$ is an open neighborhood of $\bar{\Gamma}_p$, and the $N$-step map $\bar{\mathcal{F}}^N = (\bar{g}^N, \bar{G}_N)$ is well-defined and of class $C^j$ on $V_N$.
	
	Set $K_0 := \sup_{\theta \in \mathbb{T}} |\partial_s \bar{G}_N(\theta, \bar{p}(\theta))| = \sup_{\theta \in \mathbb{T}} A_N(\theta)$. Since $K_0 \kappa_N^j \le q^N < 1$, choose $\eta > 0$ such that $(K_0 + \eta)\kappa_N^j < 1$. By the continuity of $\partial_s \bar{G}_N$ near the compact section $\bar{\Gamma}_p$, there exists $\delta > 0$ such that the compact fiberwise tube
	\[
	\mathcal{K}_\delta := \big\{ (\theta,s) \in \mathbb{T} \times \mathbb{R} \;\big|\; |s - \bar{p}(\theta)| \le \delta \big\}
	\]
	is contained in $V_N$, and $|\partial_s \bar{G}_N(\theta,s)| \le K_0 + \eta$ whenever $(\theta, s) \in \mathcal{K}_\delta$. Consequently,
	\[
	K := \sup_{(\theta,s) \in \mathcal{K}_\delta} |\partial_s \bar{G}_N(\theta,s)| \le K_0 + \eta
	\]
	satisfies:
	\[
	K \kappa_N^j < 1.
	\]
	Since $\kappa_N \ge 1$, this also implies $K < 1$.
	
	Set $\tau := \delta/2$. By smooth approximation, choose $p_0 \in C^\infty(\mathbb{T}, \mathbb{R})$ such that
	\[
	\|\bar{p} - p_0\|_{C^0} < \frac{\delta}{4} = \frac{\tau}{2}.
	\]
	Define the smooth tube $W := \{ (\theta,s) \mid |s - p_0(\theta)| < \tau \}$. For every $(\theta, s) \in W$, the triangle inequality yields:
	\[
	|s - \bar{p}(\theta)| \le |s - p_0(\theta)| + |p_0(\theta) - \bar{p}(\theta)| < \frac{\delta}{2} + \frac{\delta}{4} = \frac{3\delta}{4} < \delta.
	\]
	These estimates show that $W \subset \mathcal{K}_\delta \subset V_N$.

Choose a smooth function $\varsigma \colon \mathbb{R} \to (-\tau, \tau)$ satisfying $\varsigma(t) = t$ for $|t| \le \tau/2$ and $|\varsigma'(t)| \le 1$ for all $t \in \mathbb{R}$. Define
	\[
	\widehat{G}_N(\theta,s) := \bar{G}_N\big(\theta, \, p_0(\theta) + \varsigma(s - p_0(\theta))\big), \qquad \widehat{\mathcal{F}}_N(\theta,s) := \big(\bar{g}^N(\theta), \, \widehat{G}_N(\theta,s)\big).
	\]
	Since $|p_0(\theta) + \varsigma(s - p_0(\theta)) - p_0(\theta)| = |\varsigma(s - p_0(\theta))| < \tau$, the argument of $\bar{G}_N$ belongs to $W \subset V_N$. Thus $\widehat{\mathcal{F}}_N$ is globally well-defined on $\mathbb{T} \times \mathbb{R}$ and of class $C^j$.
	
	Let $\sigma := \tau/2 - \|\bar{p} - p_0\|_{C^0} > 0$. For any $(\theta, s)$ such that $|s - \bar{p}(\theta)| < \sigma$,
	\[
	|s - p_0(\theta)| \le |s - \bar{p}(\theta)| + |\bar{p}(\theta) - p_0(\theta)| < \sigma + \|\bar{p} - p_0\|_{C^0} = \frac{\tau}{2}.
	\]
	Thus $\varsigma(s - p_0(\theta)) = s - p_0(\theta)$, so $\widehat{G}_N(\theta,s) = \bar{G}_N(\theta,s)$ on a neighborhood of the invariant section $\bar{\Gamma}_p$. In particular, $\widehat{G}_N(\theta, \bar{p}(\theta)) = \bar{p}(\bar{g}^N(\theta))$, so $\bar{p}$ is a continuous invariant section of $\widehat{\mathcal{F}}_N$.
	
	The chain rule gives
	\[
	|\partial_s \widehat{G}_N(\theta,s)| \le |\partial_s \bar{G}_N\bigl(\theta,p_0(\theta)+\varsigma(s-p_0(\theta))\bigr)| \cdot |\varsigma'(s - p_0(\theta))| \le K.
	\]
	Thus $K$ is a global fiber Lipschitz bound for $\widehat{G}_N$. Since $K < 1$ and $K \kappa_N^j < 1$, the map $\widehat{\mathcal{F}}_N$ is a global $j$-fiber contraction on $\mathbb{T} \times \mathbb{R}$.

Lemma~\ref{lem:C^r section}, with $r=j$ and base map
$\bar h=\bar g^N$, gives $\bar p\in C^j$, hence $\psi\in C^{j+1}$.
Induction proves $\psi\in C^k$.
If $F_{\lambda,\alpha}^{\phi}$ is $C^\infty$, applying this result
for every finite $k\ge2$ gives $\psi\in C^\infty$.
This proves Theorem~\ref{thm:C_infty_smoothness}.

\subsection{A Lipschitz invariant graph that is not $C^1$}
\label{sec:irrational_Lip_not_C1}

The $C^1$ hypothesis in Theorem~\ref{thm:C_infty_smoothness} cannot be
replaced by Lipschitz continuity, even for analytic maps with a single
sinusoidal perturbation.
\begin{Proposition}\label{prop:irrational_Lip_not_C1}
Fix $\lambda\in(0,1)$ and $\omega\in\mathbb R\setminus\mathbb Q$.
For $t\ge0$ and $\alpha\in\mathbb R$, consider
\begin{equation}\label{eq:critical_sine_family}
 F_{t,\alpha}(x,y)
 :=\bigl(x+\alpha+\lambda y+\phi_t(x),\,
          \lambda y+\phi_t(x)\bigr),
 \qquad \phi_t(x):=\frac{t}{2\pi}\sin(2\pi x).
\end{equation}
There exist $t_*\in(0,1+\lambda)$ and $\alpha_*\in\mathbb R$ such that
$F_{t_*,\alpha_*}$ admits an invariant graph
\[
 \Gamma_*:=\{(x,\psi_*(x)):x\in\mathbb R\},
 \qquad
 \psi_*\in C^{0,1}(\mathbb T,\mathbb R)\setminus C^1(\mathbb T,\mathbb R).
\]
Its induced degree-one lift
\[
 g_*(x):=x+\alpha_*+\lambda\psi_*(x)+\phi_{t_*}(x)
\]
satisfies
\[
 \lim_{n\to\infty}\frac{g_*^n(x)-x}{n}=\omega.
\]
\end{Proposition}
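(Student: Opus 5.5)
The plan follows the sketch in the introduction: a maximal-amplitude argument inside the sine family at fixed rotation number. Write $F_{t,\alpha}$ as an ECS map with $\Phi_t=-\frac{t}{4\pi^2}\cos(2\pi x)$, so that $\phi_t=D\Phi_t$ and $\|D\Phi_t\|_{C^1}=t$.

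\textbf{Step 1: the set $\mathcal T$ and its supremum.} Define $\mathcal T$ as the set of $t\ge0$ for which there is $\alpha$ such that $F_{t,\alpha}$ has a $C^1$ invariant graph whose induced lift has rotation number $\omega$.
\begin{itemize}
\item $\mathcal T$ is nonempty. For $t\le(1-\sqrt\lambda)^2$, Theorem~\ref{thm:Lagrangian} gives a unique $C^1$ graph for each $\alpha$, and it depends continuously on $\alpha$ via the contraction.
\item Hence $\alpha\mapsto\rho(g_{t,\alpha})$ is continuous. Since $g_{t,\alpha+1}=g_{t,\alpha}+1$ by the Herman--Mather equation (shift $\psi$ by $1/\lambda\cdot$const), it is nondecreasing and surjective, so some $\alpha$ realizes $\omega$.
\item Set $t_*:=\sup\mathcal T$.
\item To show $t_*<1+\lambda$, I would prove that for $t\ge1+\lambda$ no Lipschitz invariant graph exists. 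At a point where $\phi_t'=t$, the relation $g+\lambda g^{-1}=(1+\lambda)x+\phi_t$ forces $g'+\lambda/g'\ge1+\lambda+t$ in a derivative sense. A Lipschitz graph over an invertible monotone base needs $g'$ bounded above and below, and combining this with the endpoint $x$ where $\phi_t'=-t$ (giving $g'+\lambda/(g')\le1+\lambda-t\le0$, impossible for $t\ge1+\lambda$) yields the contradiction.
\end{itemize}

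\textbf{Step 2: a Lipschitz limit at $t_*$.}
\begin{itemize}
\item Take $t_n\uparrow t_*$ in $\mathcal T$, with $\alpha_n$ and $C^1$ graphs $\psi_n$ whose lifts $g_n$ have rotation number $\omega$.
\item The $\alpha_n$ can be taken bounded mod $1$ after translating.
\item The derivative relation $g_n'+\lambda/(g_n'\circ g_n^{-1})=1+\lambda+\phi_{t_n}'$ (Herman--Mather differentiated) gives two-sided bounds $0<c\le g_n'\le C$ uniformly in $n$. At the maximum of $g_n'$ one gets $g_n'\le 1+\lambda+t$; at the minimum, $g_n'\ge\lambda/(1+\lambda+t)$.
\item These are uniform Lipschitz bounds on $g_n,g_n^{-1}$, hence on $\psi_n$ by \eqref{eq:psi_explicit_formula}.
\item Arzel\`a--Ascoli gives $\psi_n\to\psi_*$ uniformly and $\alpha_n\to\alpha_*$.
\item Passing to the limit in the invariance relation, $\psi_*$ is a Lipschitz invariant graph of $F_{t_*,\alpha_*}$. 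Its lift $g_*$ is a bi-Lipschitz homeomorphism, and by continuity of the rotation number in $C^0$, $\rho(g_*)=\omega$.
\end{itemize}

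\textbf{Step 3: $\psi_*\notin C^1$.} Suppose $\psi_*\in C^1$.
\begin{itemize}
\item By Theorem~\ref{thm:C_infty_smoothness}, $\psi_*$ is $C^\infty$ (the map is analytic).
\item The proof of that theorem shows $\frac1n\log P_n\to0$ uniformly. Together with $\prod J=\lambda^n$, this gives normal contraction rates $\lambda^n e^{\varepsilon n}$ against tangential rates $e^{\pm\varepsilon n}$, so for some iterate $N$ the invariant circle is $r$-normally hyperbolic (in fact normally attracting) for every $r$.
\item Persistence under $C^1$-small perturbations, via Lemma~\ref{lem:local_parameter_persistence}, gives $C^1$ invariant graphs for $(t,\alpha)$ near $(t_*,\alpha_*)$, depending continuously on parameters.
\item For fixed $t$ slightly larger than $t_*$, the rotation number $\alpha\mapsto\rho$ is continuous and nondecreasing near $\alpha_*$.
\item The remaining task is to show it takes the value $\omega$.
\end{itemize}

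\textbf{Main obstacle: strict monotonicity of the rotation number.} This is where I expect the difficulty. At irrational $\omega$, $\rho$ is strictly increasing in $\alpha$, since a family of circle diffeomorphisms increasing in $\alpha$ has strictly increasing rotation number at irrational values. I would first try to verify monotonicity $\partial_\alpha g_{t,\alpha}>0$ directly from the Herman--Mather formula: differentiating in $\alpha$ gives $\partial_\alpha g+\lambda\,\partial_\alpha(g^{-1})=1-\lambda$, and the contraction structure should yield positivity. Then, at $t_*$, values $\rho(\alpha_*\pm\delta)$ straddle $\omega$. By continuity in $t$, for $t$ slightly above $t_*$ there is $\alpha$ with rotation number $\omega$, so $t\in\mathcal T$, contradicting the maximality of $t_*$.

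Finally, $t_*>0$ because $\mathcal T\supset[0,(1-\sqrt\lambda)^2]$.
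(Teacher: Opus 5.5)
Your proposal is correct and follows essentially the paper's route. The paper likewise takes a maximal amplitude $t_*$ at the fixed rotation number $\omega$, extracts a Lipschitz limit graph from the bi-Lipschitz bounds given by the Herman--Mather identity, and, assuming $\psi_*\in C^1$, reaches a contradiction through Theorem~\ref{thm:C_infty_smoothness}, normal attraction, Lemma~\ref{lem:local_parameter_persistence}, and a drift-monotonicity and rotation-bracket argument; your variations (using $C^1$ rather than continuous graphs in $\mathcal T$, and obtaining $t_*>0$ from Theorem~\ref{thm:Lagrangian}) are harmless. The step you flag as the obstacle is handled exactly as you suggest: iterating $v-\lambda Dh\,v\circ h=1-\lambda$ and using the subexponential bounds on $D(g_*^{-n})$ gives $v=(1-\lambda)\sum_{j\ge0}\lambda^jD(g_*^{-j})\ge1-\lambda$, and a Denjoy conjugacy then yields $\rho(g_{t_*,\alpha_-})<\omega<\rho(g_{t_*,\alpha_+})$.
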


\begin{Remark}\label{rem:Hall_irrational_graphs}
Hall~\cite[Theorem~3]{Hall83} constructed a $C^\infty$ diffeomorphism of
an annulus with an attracting Lipschitz invariant circle carrying Denjoy
dynamics, that is, with no periodic or dense orbits. In his final
Remarks~(1) and~(2), he asked whether such examples exist for arbitrary
prescribed irrational rotation numbers and whether the ambient
diffeomorphism can be real analytic~\cite[p.~117]{Hall83}.
Proposition~\ref{prop:irrational_Lip_not_C1} provides analytic
dissipative twist maps with Lipschitz, non-$C^1$ invariant graphs for
every prescribed irrational rotation number. It thus establishes the
corresponding statement about the regularity of invariant graphs.
The proposition does not determine whether the induced dynamics has
wandering intervals.
\end{Remark}

The following parameter dependent statement follows from the $C^1$ persistence
theorem for normally hyperbolic invariant manifolds
\cite[Theorem~4.1]{HPS77}.
\begin{Lemma}[Local persistence with parameters]
\label{lem:local_parameter_persistence}
Let $U\subset\mathbb R^\ell$ be open, let $q_0\in U$, and let
$(q,\zeta)\mapsto f_q(\zeta)$ be a jointly $C^1$ family of
diffeomorphisms of $\mathbb T^d\times\mathbb R^d$, where
$\ell,d\ge1$. Suppose $f_{q_0}$ preserves a $C^1$ graph
$\bar\Gamma_0=\{(\theta,\bar\psi_0(\theta)):\theta\in\mathbb T^d\}$
and admits a continuous invariant splitting
\[
 T(\mathbb T^d\times\mathbb R^d)|_{\bar\Gamma_0}
 =T\bar\Gamma_0\oplus E^s.
\]
Assume that, for some $C\ge1$ and $0<\nu<1\le\mu$ with
$\nu\mu<1$, and every $n\ge0$,
\[
 \sup_{\bar\Gamma_0}\|Df_{q_0}^n|_{E^s}\|\le C\nu^n,
 \qquad
 \sup_{\bar\Gamma_0}\|Df_{q_0}^{\pm n}|_{T\bar\Gamma_0}\|
 \le C\mu^n.
\]
Then there is a neighborhood $V\subset U$ of $q_0$ and a jointly
$C^1$ function $\bar\psi:V\times\mathbb T^d\to\mathbb R^d$ such that
$\bar\psi(q_0,\cdot)=\bar\psi_0$ and
$\{(\theta,\bar\psi(q,\theta)):\theta\in\mathbb T^d\}$ is
$f_q$-invariant for every $q\in V$.
{\color{black}
After shrinking $V$, there exists $\varepsilon>0$ such that, for every
fixed $q\in V$, any $f_q$-invariant graph of a function
$\bar\varphi\in C^1(\mathbb T^d,\mathbb R^d)$ satisfying
$\|\bar\varphi-\bar\psi_0\|_{C^1}<\varepsilon$ coincides with the
graph of $\bar\psi(q,\cdot)$.
}
\end{Lemma}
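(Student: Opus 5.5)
The plan is to reduce Lemma~\ref{lem:local_parameter_persistence} to the persistence theorem for normally hyperbolic (here, normally attracting) compact invariant manifolds, \cite[Theorem~4.1]{HPS77}, applied to a suspended map without parameters. We introduce the extended map
\[
\mathcal F(q,\zeta):=(q,f_q(\zeta))
\]
on $U\times(\mathbb T^d\times\mathbb R^d)$. It is $C^1$ because the family is jointly $C^1$. For $q$ near $q_0$, the set $\{q\}\times\bar\Gamma_0$ is only invariant when $q=q_0$, so we work fiberwise.

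Concretely, we fix $q$ and regard $f_q$ as a $C^1$-small perturbation of $f_{q_0}$ on a compact neighborhood $\mathcal N$ of $\bar\Gamma_0$. The hypotheses give exactly the $1$-normal hyperbolicity required in \cite{HPS77}: the contraction rate $\nu$ on $E^s$ dominates the tangential rates, since $\nu\mu<1$ and $\nu<1\le\mu$. There is no unstable bundle, so $\bar\Gamma_0$ is a compact normally attracting $C^1$ submanifold.

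\emph{Step 1.} Apply \cite[Theorem~4.1]{HPS77} to obtain $\epsilon_0>0$ with the following property: every $C^1$ map $\epsilon_0$-close to $f_{q_0}$ on $\mathcal N$ has a unique invariant $C^1$ manifold that is $C^1$-close to $\bar\Gamma_0$ and lies in $\mathcal N$. Joint continuity of $(q,\zeta)\mapsto(f_q,Df_q)$ on the compact set $\mathcal N$ then yields a neighborhood $V$ of $q_0$ with $\|f_q-f_{q_0}\|_{C^1(\mathcal N)}<\epsilon_0$ for $q\in V$.

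\emph{Step 2.} A $C^1$ submanifold that is $C^1$-close to a graph is again a graph over $\mathbb T^d$, by the inverse function theorem applied to the projection. This defines $\bar\psi(q,\cdot)$ with $\bar\psi(q_0,\cdot)=\bar\psi_0$.

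\emph{Step 3.} Joint $C^1$ regularity in $(q,\theta)$ is the main obstacle. HPS give continuous dependence on the map, but not differentiability in $q$. For this we apply the same theorem to the suspended map $\mathcal F$ near the manifold $V\times\bar\Gamma_0$. The $q$-directions are neutral (rate $1\le\mu$) and the normal rate is still $\nu$, so the tangential rates along $V\times\bar\Gamma_0$ remain within the spectral gap. To make the noncompact, non-invariant base manageable, we replace $U$ by a small closed ball and modify $\mathcal F$ with a cutoff. Concretely, we set
\[
\tilde{\mathcal F}(q,\zeta)=(q,f_{q_0+\chi(q-q_0)}(\zeta)),
\]
where $\chi$ is a bump function equal to the identity near $0$, and we compactify the $q$-factor into a torus $\mathbb T^\ell$. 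Then $\mathbb T^\ell\times\bar\Gamma_0$ is normally attracting for the unperturbed map $(q,\zeta)\mapsto(q,f_{q_0}(\zeta))$. The map $\tilde{\mathcal F}$ is a $C^1$-small perturbation of it once the bump has small support. Its persistent invariant manifold is $C^1$ and is foliated by the invariant $q$-slices. This foliation holds because $\tilde{\mathcal F}$ preserves each slice $\{q\}\times(\mathbb T^d\times\mathbb R^d)$, and uniqueness forces the slice of the persistent manifold to coincide with the graph from Step~2. Hence $\bar\psi$ is jointly $C^1$ on $V\times\mathbb T^d$.

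\emph{Step 4.} The final local uniqueness statement is the uniqueness clause of \cite[Theorem~4.1]{HPS77}. Any $f_q$-invariant $C^1$ graph with $\|\bar\varphi-\bar\psi_0\|_{C^1}<\varepsilon$ lies in $\mathcal N$ and is $C^1$-close to $\bar\Gamma_0$, hence is an invariant manifold of the type to which that clause applies. Alternatively, since the manifold is attracting, every compact invariant set in $\mathcal N$ is contained in it, and a graph contained in a graph over all of $\mathbb T^d$ coincides with it. Shrinking $V$ and $\varepsilon$ finishes the proof.
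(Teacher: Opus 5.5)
Your plan has the same architecture as the paper's proof. You suspend the parameter over a torus with a cutoff, apply the $C^1$ persistence theorem of \cite{HPS77} to the product map $\mathcal F_0(q,\zeta)=(q,f_{q_0}(\zeta))$, and read off the invariant graphs as $q$-slices. Using HPS fiberwise in Steps 1, 2 and 4 for the identification at $q_0$ and for local uniqueness is a legitimate substitute for the paper's direct argument (a tube contraction with $R_q=y-\bar\psi(q,\theta)$).

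However, the step you single out as the main obstacle fails as written. $\tilde{\mathcal F}$ is \emph{not} a $C^1$-small perturbation of $\mathcal F_0$ just because the bump has small support. Since $\chi$ is the identity near $0$, the $q$-derivative of the fiber component is $D_qf_{q_0+\chi(q-q_0)}(\zeta)\,D\chi(q-q_0)$. At $q=q_0$ this equals $D_qf_{q_0}(\zeta)$, whatever the support size. Shrinking the support only makes $\tilde{\mathcal F}-\mathcal F_0$ small in $C^0$. This term is not negligible in practice: in the paper's application $q=(t,\alpha)$ and $\partial_\alpha f_{t,\alpha}=(1,0)\neq0$, so it is of size one. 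The persistence theorem therefore cannot be invoked for your $\tilde{\mathcal F}$.

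The missing ingredient is a rescaling of the parameter. The paper fixes the torus $\mathcal P=\mathbb T^\ell$ and a cutoff $\chi$, and sets $\mathcal F_\sigma(p,\zeta)=(p,f_{q_0+\sigma\chi(p)}(\zeta))$.
\begin{itemize}
\item The parameter derivative of the fiber component becomes $\sigma\,D_qf\,D\chi(p)=O(\sigma)$.
\item The $\zeta$-derivative converges by joint $C^1$ continuity.
\item Hence $\mathcal F_\sigma\to\mathcal F_0$ in $C^1$ near $\mathcal P\times\bar\Gamma_0$, and HPS applies for small $\sigma$.
\end{itemize}
Take the persistent graph $\Psi_\sigma$ for one such $\sigma$ and undo the scaling by setting $\bar\psi(q,\theta):=\Psi_\sigma((q-q_0)/\sigma,\theta)$. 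Your construction becomes equivalent to this if the parameter torus is measured in the coordinate $p=(q-q_0)/\sigma$ rather than in $q$. With that correction, the rest of your argument goes through, including the identification of the slices with the fiberwise HPS graphs by uniqueness.
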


\begin{proof}
Let $\mathcal P:=\mathbb T^\ell$. Choose a smooth map
$\chi:\mathcal P\to\mathbb R^\ell$ that agrees with local coordinates
near $0$ and vanishes outside a coordinate neighborhood. For small
$\sigma>0$, all parameters $q_0+\sigma\chi(p)$ belong to $U$, and
\[
 \mathcal F_\sigma(p,\zeta)
 :=\bigl(p,f_{q_0+\sigma\chi(p)}(\zeta)\bigr)
\]
is a $C^1$ diffeomorphism. Joint $C^1$ regularity gives
$\mathcal F_\sigma\to\mathcal F_0$, where $\mathcal F_0(p,\zeta)=(p,f_{q_0}(\zeta))$, in $C^1$ on a fixed compact
neighborhood of $\mathcal P\times\bar\Gamma_0$. In particular, the
parameter derivative of its second component is
$\sigma D_qf_{q_0+\sigma\chi(p)}(\zeta)D\chi(p)$ and tends uniformly
to zero. For $\mathcal F_0$, the tangent and stable bundles of the
product are $T\mathcal P\oplus T\bar\Gamma_0$ and $\{0\}\oplus E^s$.
The parameter directions have identity dynamics. Since $\mu\ge1$,
the same estimates hold for this product, after increasing $C$ if
necessary. Thus $\nu\mu<1$ makes it normally attracting.

The $C^1$ persistence theorem gives a nearby $C^1$ invariant manifold
that is $C^1$-close to the product. Its projection to
$\mathcal P\times\mathbb T^d$ is a diffeomorphism: under a $C^1$
parametrization by the original compact product, this projection is
$C^1$-close to the identity. The invariant manifold has the form
\[
 \{(p,\theta,\Psi_\sigma(p,\theta)):
 (p,\theta)\in\mathcal P\times\mathbb T^d\},
 \qquad \Psi_\sigma\in C^1.
\]
Since $\mathcal F_\sigma$ fixes $p$, each slice is invariant under
$f_{q_0+\sigma\chi(p)}$. Fix a sufficiently small $\sigma>0$.
In the coordinates where $\chi(p)=p$, set
\[
 \bar\psi(q,\theta)
 :=\Psi_\sigma\bigl((q-q_0)/\sigma,\theta\bigr)
\]
for $q$ near $q_0$. This function is jointly $C^1$.
{\color{black}
We prove the identification at $q_0$ and local uniqueness directly.
Set
\[
 R_0(\theta,y):=y-\bar\psi_0(\theta).
\]
For $z\in\bar\Gamma_0$, let $\Pi_z^s$ denote the projection onto
$E_z^s$ along $T_z\bar\Gamma_0$. The differential $DR_0$ vanishes
on $T\bar\Gamma_0$. Invariance of the splitting gives, for
$u\in\mathbb R^d$ and $n\ge0$,
\[
 D_y(R_0\circ f_{q_0}^n)(z)u
 =DR_0(f_{q_0}^n(z))\,Df_{q_0}^n(z)\Pi_z^s(0,u).
\]
Indeed, the tangent component of $(0,u)$ is mapped into the tangent
space of the graph and is annihilated by $DR_0$. Compactness bounds
$DR_0$ and $\Pi^s$ uniformly on the graph, so the stable estimate implies
\[
 \sup_{z\in\bar\Gamma_0}
 \|D_y(R_0\circ f_{q_0}^n)(z)\|\le K\nu^n
\]
for some $K$ independent of $n$. Choose $N\ge1$ with
$K\nu^N<1/4$. By continuity, there is $\eta_0>0$ such that
\[
 \|D_y(R_0\circ f_{q_0}^N)(z)\|\le\tfrac12
 \qquad\text{whenever }|R_0(z)|<\eta_0.
\]
For $z=(\theta,y)$ in this tube, the vertical segment from
$(\theta,\bar\psi_0(\theta))$ to $z$ stays in the tube.
Since $R_0\circ f_{q_0}^N$ vanishes on $\bar\Gamma_0$, integration
along this segment yields
\[
 |R_0(f_{q_0}^N(z))|\le\tfrac12|R_0(z)|.
\]
If an invariant graph $\bar\Gamma$ is contained in the tube, then
$f_{q_0}^N(\bar\Gamma)=\bar\Gamma$. Its compactness gives
\[
 M:=\max_{z\in\bar\Gamma}|R_0(z)|
   =\max_{z\in\bar\Gamma}|R_0(f_{q_0}^N(z))|
   \le\tfrac12 M.
\]
Thus $M=0$, and $\bar\Gamma=\bar\Gamma_0$. For sufficiently small
$\sigma$, the slice of the persistent manifold at $p=0$ lies in this
tube. This proves $\bar\psi(q_0,\cdot)=\bar\psi_0$.

Now put $R_q(\theta,y):=y-\bar\psi(q,\theta)$ and keep $N$ fixed.
Joint $C^1$ dependence and compactness allow us to shrink $V$ and
choose $\eta>0$ so that
\[
 \|D_y(R_q\circ f_q^N)(z)\|\le\tfrac34
 \qquad(q\in V,\ |R_q(z)|<\eta).
\]
The same integration along vertical segments gives
$|R_q(f_q^N(z))|\le\tfrac34|R_q(z)|$ in this tube.
The maximum argument above, now for the fixed map $f_q$, proves
uniqueness among invariant graphs contained in the tube. Finally,
shrink $V$ so that
$\|\bar\psi(q,\cdot)-\bar\psi_0\|_{C^0}<\eta/2$ and take
$\varepsilon=\eta/2$. If
$\|\bar\varphi-\bar\psi_0\|_{C^1}<\varepsilon$, its graph lies in
$|R_q|<\eta$, which proves the stated uniqueness.
}
\end{proof}

\begin{proof}[{Proof of Proposition~\ref{prop:irrational_Lip_not_C1}}]

Fix $\lambda$ and $\omega$ as in the proposition, and use the family
\eqref{eq:critical_sine_family}. We write $f_{t,\alpha}$ for its
projection to the cylinder and use the lift rotation number
$\rho(g)$ from Definition~\ref{def:rotation_number}.
Since $\phi_t=D\Phi_t$, with
$\Phi_t(x)=-t\cos(2\pi x)/(4\pi^2)$, this is the ECS family
\eqref{eq:conformal_symplectic_twist_map} in dimension one; its maps
are real-analytic diffeomorphisms with determinant $\lambda$ and
twist derivative $\partial_y(\pi_xF_{t,\alpha})=\lambda>0$.
The same formula defines the family for negative $t$ when an open
parameter neighborhood is needed in Step~3.

\medskip
\noindent\textbf{Step 1: Uniform estimates for invariant graphs.}
{Let $t\ge0$ and let $\psi\in C^0(\mathbb T,\mathbb R)$
define an invariant graph. Set}
\[
 g(x):=x+\alpha+\lambda\psi(x)+\phi_t(x).
\]
{By Proposition~\ref{pro:Herman}, $g$ is a strictly
increasing degree-one lift. The invariance equation gives}
\begin{equation}\label{eq:critical_inverse_identity}
 \psi(g(x))=\lambda\psi(x)+\phi_t(x)=g(x)-x-\alpha,
 \qquad g^{-1}(x)=x-\alpha-\psi(x).
\end{equation}
{The Herman--Mather identity \eqref{eq:Herman_formula}
reduces to}
\begin{equation}\label{eq:critical_HM_identity}
 g(x)+\lambda g^{-1}(x)
 =(1+\lambda)x+(1-\lambda)\alpha+\phi_t(x).
\end{equation}
Both terms on the left are increasing. If $t>1+\lambda$, the derivative
of the right-hand side is negative on a nonempty interval, which is
impossible. Thus any such graph has $t\le1+\lambda$.

Set $M_t:=1+\lambda+t$. For $u<v$, subtracting
\eqref{eq:critical_HM_identity} at $u$ from the same identity at $v$
cancels the constant term $(1-\lambda)\alpha$ and gives
\begin{align*}
 &g(v)-g(u)+\lambda\bigl(g^{-1}(v)-g^{-1}(u)\bigr)\\
 &\qquad=(1+\lambda)(v-u)+\phi_t(v)-\phi_t(u)\\
 &\qquad=(1+\lambda)(v-u)+\int_u^v t\cos(2\pi s)\,ds
 \le M_t(v-u).
\end{align*}
Since $g$ and $g^{-1}$ are strictly increasing and $\lambda>0$,
both terms on the left are positive. Each is bounded above by their
sum, so
\[
 0<g(v)-g(u)\le M_t(v-u),\qquad
 0<\lambda\bigl(g^{-1}(v)-g^{-1}(u)\bigr)\le M_t(v-u).
\]
Dividing by $v-u$ and taking the supremum over $u<v$ gives
\begin{equation}\label{eq:critical_biLip_bounds}
 {\|g\|_{\mathrm{Lip}}\le M_t,\qquad
 \|g^{-1}\|_{\mathrm{Lip}}\le M_t/\lambda.}
\end{equation}
Equation~\eqref{eq:critical_inverse_identity} implies that every
continuous invariant graph is Lipschitz, with
{$\|\psi\|_{\mathrm{Lip}}\le1+M_t/\lambda$}.
Also, surjectivity of $g$ and the first equality in
\eqref{eq:critical_inverse_identity} give
\begin{equation}\label{eq:critical_height_drift_bounds}
 \|\psi\|_{C^0}\le\frac{t}{2\pi(1-\lambda)},\qquad
 |\alpha-\rho(g)|\le\|\psi\|_{C^0}.
\end{equation}
For the second estimate, use $g(x)-x=\alpha+\psi(g(x))$, sum along an
orbit, and divide by the number of iterates.

The endpoint $t=1+\lambda$ is also excluded. Indeed,
\eqref{eq:critical_biLip_bounds} implies
\[
 g(v)-g(u)\ge\frac{\lambda}{M_t}(v-u),\qquad
 g^{-1}(v)-g^{-1}(u)\ge\frac{1}{M_t}(v-u).
\]
The increments of the right-hand side of
\eqref{eq:critical_HM_identity} are at least
$2\lambda(v-u)/M_t$. Its derivative cannot vanish, whereas for
$t=1+\lambda$ it vanishes at $x=1/2$.

\medskip
\noindent\textbf{Step 2: A maximal parameter with prescribed rotation number.}
Define
\[
 \mathcal S_\omega:=
 \left\{t\ge0:\begin{array}{l}
 \text{there exist }\alpha\in\mathbb R\text{ and a continuous invariant}\\
 \text{graph of }F_{t,\alpha}\text{ whose induced lift }g
 \text{ satisfies }\rho(g)=\omega
 \end{array}\right\}.
\]
At $t=0$, the graph $\psi=0$ with $\alpha=\omega$ shows that
$\mathcal S_\omega$ is nonempty. Step~1 gives
$\mathcal S_\omega\subset[0,1+\lambda)$.
Let $t_*:=\sup\mathcal S_\omega$ and choose $t_j\in\mathcal S_\omega$
with $t_j\to t_*$. Choose corresponding parameters $\alpha_j$ and
graphs $\psi_j$, and denote their induced lifts by $g_j$.
The bounds in Step~1 imply uniform boundedness and equicontinuity of
$\psi_j$, and boundedness of $\alpha_j$. After passing to a subsequence,
\[
 \alpha_j\to\alpha_*,\qquad
 \psi_j\to\psi_*\quad\text{uniformly},
\]
where $\psi_*$ is periodic and Lipschitz.
The lifts $g_j$ and their inverses converge uniformly to
\[
 g_*(x):=x+\alpha_*+\lambda\psi_*(x)+\phi_{t_*}(x),\qquad
 h_*(x):=x-\alpha_*-\psi_*(x),
\]
respectively. The uniform bounds in
\eqref{eq:critical_biLip_bounds} allow passage to the limit in
$g_j\circ g_j^{-1}=g_j^{-1}\circ g_j=\mathrm{Id}$, so
$h_*=g_*^{-1}$. Passing to the limit in the invariance equation shows
that $\psi_*$ is invariant under $F_{t_*,\alpha_*}$.
Continuity of the rotation number in the uniform topology
\cite{Her79} gives $\rho(g_*)=\omega$.
Thus $t_*\in\mathcal S_\omega$, and Step~1 shows that $t_*<1+\lambda$.

\medskip
\noindent\textbf{Step 3: Persistence if the limiting graph were $C^1$.}
Suppose, by contradiction, that $\psi_*\in C^1$.
Then $g_*\in C^1$, and the lower Lipschitz bound in Step~1 gives
\[
 Dg_*(x)\ge\frac{\lambda}{M_{t_*}}>0
 \qquad\text{for every }x\in\mathbb R.
\]
Thus $\bar g_*$ is an orientation-preserving $C^1$ circle
 diffeomorphism. Since $\rho(g_*)=\omega$ is irrational,
Theorem~\ref{thm:C_infty_smoothness} gives $\psi_*\in C^\infty$,
and the formula for $g_*$ gives $g_*\in C^\infty$.
We verify the normal hyperbolicity needed for persistence with
parameters. By \eqref{eq:derivative_exponential_bound} and the inverse
derivative identity, for every $\varepsilon>0$ there is
$C_\varepsilon\ge1$ such that, for all $n\ge0$ and $x\in\mathbb R$,
\begin{equation}\label{eq:critical_subexponential}
 C_\varepsilon^{-1}e^{-\varepsilon n}
 \le D(g_*^{\pm n})(x)
 \le C_\varepsilon e^{\varepsilon n}.
\end{equation}
Use coordinates $z=y-\psi_*(x)$, in which the graph is $z=0$, and denote
the map in these coordinates by $\widehat F_*$.
Writing $a(x):=Dg_*(x)$, the derivative along this graph is
\begin{equation}\label{eq:critical_triangular_derivative}
 D\widehat F_*(x,0)
 =\begin{pmatrix}a(x)&\lambda\\0&\lambda/a(x)\end{pmatrix}.
\end{equation}
Indeed, invariance makes the lower-left entry zero; the upper-right
entry is $\lambda$, and the determinant is $\lambda$.

{We construct the stable direction. Applying the notation of
Section~\ref{sec:circle_derivatives} to $g_*$, write $P_n(x)=D(g_*^n)(x)$ for $n\ge0$,
so $P_0=1$. Define}
\[
 \eta(x):=-\sum_{j=0}^{\infty}
 \frac{\lambda^{j+1}}{P_j(x)P_{j+1}(x)}.
\]
Choosing $\varepsilon>0$ with $\lambda e^{2\varepsilon}<1$ in
\eqref{eq:critical_subexponential} proves uniform convergence.
The resulting continuous periodic function satisfies
\[
 \eta(x)=\frac{\lambda}{a(x)^2}\eta(g_*(x))
          -\frac{\lambda}{a(x)}.
\]
Hence \eqref{eq:critical_triangular_derivative} maps
$(\eta(x),1)$ to $(\lambda/a(x))(\eta(g_*(x)),1)$.
This defines an invariant line bundle $E^s$ transverse to the tangent
bundle of the graph. Its $n$-step multiplier in this frame is
$\lambda^n/P_n(x)$, whereas the tangent multiplier is $P_n(x)$.
Since the frames and their inverses are uniformly bounded, on the
compact projected graph $\bar\Gamma_*$ one has
\[
 \sup_{\bar\Gamma_*}\|Df_{t_*,\alpha_*}^n|_{E^s}\|
 \le C_\varepsilon'\lambda^n e^{\varepsilon n},\qquad
 \sup_{\bar\Gamma_*}\|Df_{t_*,\alpha_*}^{\pm n}|_{T\bar\Gamma_*}\|
 \le C_\varepsilon' e^{\varepsilon n}.
\]

Choose $\varepsilon>0$ with $\lambda e^{2\varepsilon}<1$.
These estimates satisfy Lemma~\ref{lem:local_parameter_persistence}
with $q=(t,\alpha)$, $q_0=(t_*,\alpha_*)$,
$\nu=\lambda e^\varepsilon$, and $\mu=e^\varepsilon$, since
$\nu\mu=\lambda e^{2\varepsilon}<1$.
Taking periodic lifts of the graphs given by the lemma yields
$\psi_{t,\alpha}$ jointly $C^1$ in $(t,\alpha,x)$, with
$\psi_{t_*,\alpha_*}=\psi_*$. Write $g_{t,\alpha}$ for their induced
lifts, so $g_{t_*,\alpha_*}=g_*$.
{
After shrinking the parameter neighborhood, we may take it to be an
open rectangle about $(t_*,\alpha_*)$.
Equation~\eqref{eq:critical_inverse_identity} gives
\[
 g_{t,\alpha}^{-1}(x)=x-\alpha-\psi_{t,\alpha}(x),
\]
so these inverses are jointly $C^1$ in $(t,\alpha,x)$ as well.
}

\medskip
\noindent\textbf{Step 4: Varying the drift while preserving the rotation number.}
Fix $t=t_*$ and differentiate
\eqref{eq:critical_HM_identity} along the local family at
$\alpha=\alpha_*$. With
\[
 v(x):=\left.\partial_\alpha g_{t_*,\alpha}(x)
       \right|_{\alpha=\alpha_*},\qquad h:=g_*^{-1},
\]
the identity $g_{t_*,\alpha}(g_{t_*,\alpha}^{-1}(x))=x$ gives
\[
 \left.\partial_\alpha g_{t_*,\alpha}^{-1}(x)
 \right|_{\alpha=\alpha_*}
 =-\frac{v(h(x))}{Dg_*(h(x))}=-Dh(x)v(h(x)).
\]
Thus
\begin{equation}\label{eq:critical_drift_equation}
 v(x)-\lambda Dh(x)v(h(x))=1-\lambda.
\end{equation}
Iterating this equation $N$ times yields
\[
 v(x)=(1-\lambda)\sum_{j=0}^{N-1}\lambda^j D(h^j)(x)
       +\lambda^N D(h^N)(x)v(h^N(x)).
\]
The function $v$ is continuous and periodic, hence bounded.
{
For $\varepsilon>0$ with $\lambda e^\varepsilon<1$,
\eqref{eq:critical_subexponential} gives
\[
 \sup_{x\in\mathbb R}
 \bigl|\lambda^N D(h^N)(x)v(h^N(x))\bigr|
 \le C_\varepsilon(\lambda e^\varepsilon)^N\|v\|_{C^0}
 \longrightarrow0.
\]
The same geometric bound proves uniform convergence of the series.
} We obtain
\begin{equation}\label{eq:critical_positive_response}
 v(x)=(1-\lambda)\sum_{j=0}^{\infty}
              \lambda^jD(g_*^{-j})(x)\ge1-\lambda>0.
\end{equation}
{
By joint $C^1$ dependence, periodicity, and
\eqref{eq:critical_positive_response}, we can choose $s>0$ so that
the segment $\{t_*\}\times[\alpha_*-s,\alpha_*+s]$ lies in the
parameter rectangle and
\[
 \partial_\alpha g_{t_*,\alpha}(x)\ge\frac{1-\lambda}{2}
 \qquad(x\in\mathbb R,\ |\alpha-\alpha_*|\le s).
\]
Set $\alpha_-:=\alpha_*-s$ and $\alpha_+:=\alpha_*+s$.
Integrating this derivative bound over $[\alpha_-,\alpha_*]$ and
$[\alpha_*,\alpha_+]$ shows that both
$g_*(x)-g_{t_*,\alpha_-}(x)$ and
$g_{t_*,\alpha_+}(x)-g_*(x)$ are at least $s(1-\lambda)/2>0$.
In particular,
}
\[
 g_{t_*,\alpha_-}(x)<g_*(x)<g_{t_*,\alpha_+}(x)
 \qquad\text{for all }x\in\mathbb R.
\]
Under the contradiction hypothesis $\psi_*\in C^1$,
Theorem~\ref{thm:C_infty_smoothness} gives
$\bar g_*\in\operatorname{Diff}_+^\infty(\mathbb T)$, where
$\operatorname{Diff}_+^\infty(\mathbb T)$ denotes the orientation-preserving
smooth circle diffeomorphisms. Its rotation number is irrational, so
Denjoy's theorem applies. There is an increasing degree-one
homeomorphism $H$ such that
$H\circ g_*\circ H^{-1}(x)=x+\omega$.
For brevity, set
\[
 \widetilde g_\pm:=H\circ g_{t_*,\alpha_\pm}\circ H^{-1}.
\]
Evaluate the strict inequalities above at $H^{-1}(x)$ and apply the
strictly increasing map $H$. The conjugacy identity then gives
\[
 \widetilde g_-(x)<x+\omega<\widetilde g_+(x)
 \qquad\text{for every }x\in\mathbb R.
\]
The maps $H$, $H^{-1}$, and $g_{t_*,\alpha_\pm}$ commute with integer
translations, so $\widetilde g_\pm(x+1)=\widetilde g_\pm(x)+1$.
Hence the positive continuous functions
$x+\omega-\widetilde g_-(x)$ and
$\widetilde g_+(x)-x-\omega$ are $1$-periodic. Their minima on
$[0,1]$ satisfy
\[
 \delta_-:=\min_{x\in[0,1]}
 \bigl(x+\omega-\widetilde g_-(x)\bigr)>0,\qquad
 \delta_+:=\min_{x\in[0,1]}
 \bigl(\widetilde g_+(x)-x-\omega\bigr)>0.
\]
Periodicity makes these lower bounds valid on all of $\mathbb R$.
Thus
\[
 H\circ g_{t_*,\alpha_-}\circ H^{-1}(x)
 \le x+\omega-\delta_-,\qquad
 H\circ g_{t_*,\alpha_+}\circ H^{-1}(x)
 \ge x+\omega+\delta_+.
\]
{
For each $k\ge0$, apply these bounds at
$\widetilde g_-^{\,k}(x)$ and $\widetilde g_+^{\,k}(x)$, respectively:
\[
 \widetilde g_-^{\,k+1}(x)-\widetilde g_-^{\,k}(x)
 \le\omega-\delta_-,\qquad
 \widetilde g_+^{\,k+1}(x)-\widetilde g_+^{\,k}(x)
 \ge\omega+\delta_+.
\]
Summing over $k=0,\ldots,n-1$ cancels the intermediate terms, since
\[
 \sum_{k=0}^{n-1}
 \bigl(\widetilde g_\pm^{\,k+1}(x)-\widetilde g_\pm^{\,k}(x)\bigr)
 =\widetilde g_\pm^{\,n}(x)-x.
\]
This gives, for every $n\ge1$,
}
\[
 \widetilde g_-^{\,n}(x)\le x+n(\omega-\delta_-),\qquad
 \widetilde g_+^{\,n}(x)\ge x+n(\omega+\delta_+).
\]
Moreover,
$\widetilde g_\pm^{\,n}(H(x))=H(g_{t_*,\alpha_\pm}^{\,n}(x))$.
Since $H-\mathrm{Id}$ is periodic and bounded, dividing the
displacements in this identity by $n$ and letting $n\to\infty$
shows that $\rho(\widetilde g_\pm)=\rho(g_{t_*,\alpha_\pm})$.
Consequently,
\[
 \rho(g_{t_*,\alpha_-})\le\omega-\delta_-<\omega
 <\omega+\delta_+\le\rho(g_{t_*,\alpha_+}),
\]
which proves
\begin{equation}\label{eq:critical_rotation_bracket}
 \rho(g_{t_*,\alpha_-})<\omega<\rho(g_{t_*,\alpha_+}).
\end{equation}
Continuity preserves these strict inequalities when $t>t_*$ is
sufficiently close to $t_*$. For such $t$, the family is defined for
every $\alpha\in[\alpha_-,\alpha_+]$. Applying the intermediate value
theorem to $\alpha\mapsto\rho(g_{t,\alpha})$ gives a parameter
$\alpha(t)$ with $\rho(g_{t,\alpha(t)})=\omega$.
This places $t>t_*$ in $\mathcal S_\omega$, contradicting the definition
of $t_*$.

We conclude that $\psi_*\notin C^1$.
Finally, at $t=0$ the invariance equation forces
$\|\psi\|_{C^0}\le\lambda\|\psi\|_{C^0}$, so the only invariant graph
is $\psi=0$. Thus the non-$C^1$ graph just obtained has $t_*>0$,
which completes the proof.
\end{proof}

{\color{black}
\begin{proof}[Proof of Theorem~\ref{thm:main_regularity}]
Part~\textup{(i)} follows from Theorem~\ref{thm:C_infty_smoothness},
using Remark~\ref{rem:det} for the uniform determinant bounds.
Part~\textup{(ii)} is Proposition~\ref{prop:irrational_Lip_not_C1},
with $\phi_{t_*}=D\Phi$ for the potential $\Phi$ stated in the theorem.
\end{proof}
}

 \vspace{2em}

\noindent\textbf{Data Availability Statement.}
The authors state that this manuscript has no associated data and there is no conflict of interest.

	 \vspace{2em}
		
 \noindent\textbf{AI declaration.}
The authors used OpenAI’s ChatGPT as an auxiliary tool for proofreading, improving the presentation of some arguments.
All mathematical arguments were independently verified by the authors, who take full
responsibility for the content.

\vspace{2em}


\end{document}